\definecolor{darkgreen}{rgb}{0,0.6,0}
\definecolor{darkblue}{rgb}{0.0,0.0,0.9}
\definecolor{darkred}{rgb}{0.8,0.0,0.0}
\definecolor{darkpurple}{rgb}{0.6,0.0,0.85}
\definecolor{darkorange}{rgb}{0.8,0.4,0.0}
\definecolor{orange}{rgb}{1.0,0.49,0.0}
\definecolor{palepink}{rgb}{1.0,0.83,0.83}
\definecolor{paleblue}{rgb}{0.9,0.9,1.0}
\definecolor{palepurple}{rgb}{1.0,0.9,1.0}
\definecolor{palegreen}{rgb}{0.9,1.0,0.9}
\definecolor{lesspalepink}{rgb}{1.0,0.75,0.75}
\definecolor{lesspaleblue}{rgb}{0.8,0.8,1.0}
\definecolor{lesspalepurple}{rgb}{1.0,0.85,1.0}
\definecolor{lesspalegreen}{rgb}{0.85,1.0,0.85}
\definecolor{evenlesspalepink}{rgb}{1.0,0.65,0.65}
\definecolor{evenlesspaleblue}{rgb}{0.7,0.7,1.0}
\definecolor{evenlesspalepurple}{rgb}{0.8,0.65,1.0}
\definecolor{evenlesspalegreen}{rgb}{0.75,1.0,0.75}
\definecolor{evenlesspaleorange}{rgb}{1.0,0.8,0.25}
\newtheorem{theorem}{Theorem}[section]
\newtheorem{lemma}[theorem]{Lemma}
\newtheorem{claim}{Claim}[section]
\newenvironment{claimproof}[1]{\par\noindent\underline{Proof:}\space#1}{\hfill $\Diamond$}
\theoremstyle{definition}
\newtheorem{definition}[theorem]{Definition}
\newtheorem{remark}{Remark}
\def \diam {{\rm diam}\,}
\def \epsilon {\varepsilon}
\def \phi {\varphi}
\def \Gm {{\mathcal G}_m}
\def \J {{\mathcal J}}
\def \Jm {{\mathcal J}_m}
\def \Jn {{\mathcal J}_n}
\def \Jthinm {{\mathcal J}_{m,\scriptstyle{\mathrm{thin}}}}
\def \F {{\mathcal F}}
\def \Fm {{\mathcal F}_m}
\def \Fn {{\mathcal F}_n}
\def \H {{\mathcal H}}
\def \Hm {{\mathcal H}_m}
\def \Hm {{\mathcal H}_m}
\def \Hnt {\tilde{\mathcal H}_n}
\def \Hmt {\tilde{\mathcal H}_m}
\def \Am {{\mathcal A}_{\infty, m}}
\def \C {\mathbb C}
\def \chat {{\widehat{\mathbb C}}}
\def \N {\mathbb N}
\def \nochat {{\mathbb N}_0 \times {\widehat {\mathbb C}}}
\def \Fsigma {{\mathrm F}_\sigma}
\def \Gdelta {{\mathrm G}_\delta}
\def \interior {\mathrm{int}}
\def \Pm {\{P_m \}_{m=1}^\infty}
\def \dt {{\mathrm d}t}
\def \dz {{\mathrm d}z}
\title[TT Julia Sets]{Thick-Thin non-Autonomous Julia Sets}
\author{Mark Comerford, Hiroki Sumi}
\keywords{Uniformly Perfect Sets, Hereditarily non Uniformly Perfect Sets, Non-Autonomous Iteration}
\subjclass{Primary 30D05, Secondary 28A80}
\thanks{
The authors wish to thank Rich Stankewitz of Ball State University for his kind help with earlier versions of this manuscript.}
\thanks{The second author is
partially supported by JSPS Grant-in-Aid for Scientific Research (B) 
Grant Number JP24K00526.
}
\email{mcomerford@math.uri.edu}
\email{sumi@math.h.kyoto-u.ac.jp}
\begin{document}

\maketitle
\centerline{\scshape Mark Comerford}
\medskip
{\footnotesize
 \centerline{Department of Mathematics}
   \centerline{University of Rhode Island}
   \centerline{5 Lippitt Road, Room 102F}
   \centerline{Kingston, RI 02881, USA}
} 

\medskip


\medskip

\centerline{\scshape Hiroki Sumi }
\medskip
{\footnotesize
 \centerline{Division of Mathematical and Information Sciences}
 \centerline{Graduate School of Human and Environmental Studies}
   \centerline{Kyoto University}
   \centerline{Yoshida-nihonmatsu-cho, Sakyo-ku}
   \centerline{Kyoto 606-8501, Japan}
   \centerline{https://www.math.h.kyoto-u.ac.jp/users/sumi/index.html}
} 

\bigskip

\begin{abstract}Hereditarily non uniformly perfect (HNUP) sets were introduced by Stankewitz, Sugawa, and Sumi in \cite{SSS} who gave several examples of such sets based on Cantor set-like constructions using nested intervals. For non-autonomous iteration where one considers compositions of polynomials from a sequence which is in general allowed to vary, the Julia set is uniformly perfect for all sequences with suitably bounded coefficients, while 
Comerford, Stankewitz and Sumi showed in \cite{CSS} that for certain sequences of polynomials with unbounded coefficients, it is possible to have Julia sets which are HNUP. 
In this manuscript we give an example of a non-autonomous polynomial sequences whose Julia sets lie in between these two extremes in that they are not uniformly perfect, but also not HNUP. In addition we show that these Julia sets can be expressed as a `thick-thin' decomposition consisting of a ${\mathrm F}_\sigma$ subset which is a countable union of uniformly perfect sets and 
a ${\mathrm G}_\delta$ subset which is HNUP.
\end{abstract}

\section{Introduction}

Our paper is concerned with non-autonomous iteration of complex polynomials. This subject was started by Fornaess and Sibony \cite{FS} in 1991 and by Sester, Sumi, and others who were working in the closely related area of skew-products \cite{Ses, Sumi1, Sumi2, Sumi3, Sumi4}. There is also an extensive literature in the real variables case which is mainly focused on topological dynamics, chaos, and difference equations, e.g., \cite{Balibrea, CL}.  

We begin with the basic definitions we need in order to state the main theorems of this paper. In the following sections we then prove these theorems together with some supporting results and then make a few concluding remarks.

\subsection{Polynomial Sequences}

Let $\Pm$ be a sequence of polynomials where each $P_m$ has degree $d_m \ge 2$. For each $0 \le m$, let $Q_m$ be the composition $P_m \circ \cdots \cdots \circ P_2 \circ P_1$ (where for convenience we set $Q_0 = Id$)
and, for each $0 \le m \le n$, let $Q_{m,n}$ be the composition $P_n \circ \cdots \cdots \circ P_{m+2} \circ P_{m+1}$ (where we let each $Q_{m,m}$ be the identity). Such a sequence can be thought of in terms of the sequence of iterates of a skew product on $\chat$ over the non-negative integers $\N_0$ or, equivalently, in terms of the sequence of iterates of a mapping $F$ of the set $\nochat$ to itself, given by $F(m, z) := (m+1, P_{m+1}(z))$.
Let the degrees of these compositions $Q_m$ and $Q_{m, n}$ be $D_m$ and $D_{m,n}$ respectively so that $D_m = \prod_{i=1}^m d_i$, $D_{m,n} = \prod_{i=m+1}^n d_i$.

For each $m \ge 0$ define the \emph{$m$th iterated Fatou set} $\Fm$ by
\[ \Fm = \{z \in \chat : \{Q_{m,n}\}_{n=m}^\infty \;
\mbox{is normal on some neighbourhood of}\; z \}\]
where we take our neighbourhoods with respect to the spherical topology on $\chat$. We then define the \emph{$m$th iterated Julia set} $\Jm$ to be the complement $\chat \setminus \Fm$. At time $m =0$ we call the corresponding iterated Fatou and Julia sets simply the \emph{Fatou} and \emph{Julia sets} for our sequence and designate them by $\F$ and $\J$ respectively.

One can easily show that the iterated Fatou and Julia sets are completely invariant in the following sense.

\begin{theorem}\label{ThmCompInvar}
For each $0 \le m \le n$, $Q_{m,n}(\Fm) = \Fn$ and $Q_{m,n}(\Jm) = \Jn$ with components of $\Fm$ being mapped surjectively onto components of $\Fn$.
\end{theorem}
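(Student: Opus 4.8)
The plan is to reduce everything to the one-step relation between times $m$ and $m+1$ and then induct on $n-m$. Write $P:=P_{m+1}$, so that $Q_{m,n}=Q_{m+1,n}\circ P$ for $m<n$, while $Q_{m,m}=\mathrm{Id}$. The key point to isolate is the single identity $\Fm = P^{-1}(\F_{m+1})$. Granting it, surjectivity of $P$ as a self-map of $\chat$ (a polynomial of degree $\ge 2$) upgrades it to $P(\Fm)=\F_{m+1}$, and then the inductive step $Q_{m,n}(\Fm)=Q_{m+1,n}(P(\Fm))=Q_{m+1,n}(\F_{m+1})=\Fn$ closes, with base case $Q_{m,m}=\mathrm{Id}$; passing to complements handles $\Jm$ in parallel. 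So the crux is to prove $\Fm = P^{-1}(\F_{m+1})$, which I would do by the two inclusions.

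The inclusion $P^{-1}(\F_{m+1})\subseteq\Fm$ is routine: if $P(z)\in\F_{m+1}$, choose a neighbourhood $V$ of $P(z)$ on which $\{Q_{m+1,k}\}_{k\ge m+1}$ is normal; on the neighbourhood $P^{-1}(V)$ of $z$ one has $Q_{m,k}=Q_{m+1,k}\circ P$ for $k\ge m+1$, and precomposition by the fixed continuous map $P$ preserves normality with respect to the spherical metric, while adjoining the single map $Q_{m,m}=\mathrm{Id}$ does not destroy it; hence $z\in\Fm$. For the reverse inclusion $P(\Fm)\subseteq\F_{m+1}$, let $z\in\Fm$ and set $w:=P(z)$. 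I would pick a small disc $V\ni w$ so that the component $U$ of $P^{-1}(V)$ containing $z$ lies inside a neighbourhood of $z$ on which $\{Q_{m,k}\}$ is normal and so that $P\colon U\to V$ is a proper branched cover whose only critical point and only preimage of $w$ is $z$. For an arbitrary sequence $k_j\to\infty$, normality on $U$ yields a subsequence with $Q_{m,k_j}\to g$ locally uniformly on $U$; using local holomorphic inverse branches of $P$ off the single critical value $w$, the maps $Q_{m+1,k_j}$ converge locally uniformly on $V\setminus\{w\}$ to a holomorphic $h\colon V\setminus\{w\}\to\chat$ with $h\circ P=g$ off $z$. Since $g$ is continuous at $z$ and, by properness, preimages in $U$ of points approaching $w$ must approach $z$, the function $h$ has a limit in $\chat$ at $w$ and so extends holomorphically across $w$; finally, pulling any points $w_j'\to w$ back to preimages $\zeta_j\to z$ and invoking the locally uniform convergence of $Q_{m,k_j}$ near $z$ upgrades $Q_{m+1,k_j}\to h$ to locally uniform convergence on all of $V$. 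As $k_j$ was arbitrary, $\{Q_{m+1,k}\}$ is normal near $w$, i.e. $w\in\F_{m+1}$.

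With $\Fm=P^{-1}(\F_{m+1})$, hence $\Jm=P^{-1}(\J_{m+1})$, established, surjectivity of $P$ gives $P(\Jm)=\J_{m+1}$ and $P(\Fm)=\F_{m+1}$, and the induction above delivers $Q_{m,n}(\Fm)=\Fn$ and $Q_{m,n}(\Jm)=\Jn$ for all $0\le m\le n$. For the statement on Fatou components, fix a component $W$ of $\Fm$: since $Q_{m,n}$ is a non-constant holomorphic self-map of $\chat$ it is open and continuous, so $Q_{m,n}(W)$ is a connected open subset of $\Fn$ and thus lies in a single component $W'$ of $\Fn$. To see $Q_{m,n}(W)=W'$: the set $Q_{m,n}(W)$ is open in $\chat$ and also relatively closed in $W'$, because if $z_j\in W$ and $Q_{m,n}(z_j)\to w'\in W'$, then by compactness of $\chat$ a subsequence of $(z_j)$ converges to some $z\in\overline W\subseteq W\cup\Jm$, and $z\notin\Jm$ (else $Q_{m,n}(z)\in\Jn$, contradicting $w'\in\Fn$), so $z\in W$ and $w'=Q_{m,n}(z)\in Q_{m,n}(W)$; a nonempty relatively clopen subset of the connected set $W'$ is all of $W'$.

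The step I expect to be the main obstacle is the reverse one-step inclusion at the critical value $w$: promoting locally uniform convergence on the punctured disc $V\setminus\{w\}$ together with pointwise convergence at $w$ to genuine locally uniform convergence across $w$. This fails for general families on a punctured disc, so the argument must really exploit that the limit arises from a family converging locally uniformly on the cover $U$ and that $P$ is proper over $V$; everything else is bookkeeping with the cocycle relation, openness of polynomial maps, and the elementary topology of components of open subsets of $\chat$.
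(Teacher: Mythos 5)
Your proposal is correct and complete. The paper merely asserts this theorem as something that ``one can easily show'' and supplies no argument, so there is no in-paper proof to compare against; what you have written is the standard complete-invariance argument adapted from a single iterated map to a varying composition sequence, and I see no gap. The one-step reduction $\Fm = P_{m+1}^{-1}({\mathcal F}_{m+1})$, the forward inclusion via precomposition by the continuous map $P_{m+1}$, the reverse inclusion via a proper branched cover about the critical point, and the clopen argument identifying images of Fatou components are all sound.

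One observation on efficiency. In the reverse inclusion $P(\Fm)\subseteq{\mathcal F}_{m+1}$ you first build $h$ on the punctured disc $V\setminus\{w\}$, argue a removable singularity at $w$, and then run a proof by contradiction to upgrade convergence across the puncture. You can collapse all of this into one step by defining $h$ on all of $V$ at once: for $w'\in V$ put $h(w'):=g(\zeta')$ for any $\zeta'\in U$ with $P(\zeta')=w'$, which is well-defined since $g(\zeta')=\lim_j Q_{m,k_j}(\zeta')=\lim_j Q_{m+1,k_j}(w')$ depends only on $w'$. The spherical distance from $Q_{m+1,k_j}(w')$ to $h(w')$ equals the spherical distance from $Q_{m,k_j}(\zeta')$ to $g(\zeta')$, and by properness of $P\colon U\to V$ the preimages $\zeta'$ of a compact $K\subset V$ may be chosen in a fixed compact subset of $U$ on which $Q_{m,k_j}\to g$ uniformly; uniform convergence $Q_{m+1,k_j}\to h$ on $K$ is then immediate, and holomorphy of $h$ follows because it is a locally uniform limit of holomorphic maps into $\chat$. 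This makes it transparent why the critical point causes no trouble and avoids any removable-singularity bookkeeping; your route is not wrong, just longer than necessary.

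Two small points worth spelling out in a polished version: when setting up the cover you shrink $V$ twice, once so that the component $U$ of $P_{m+1}^{-1}(V)$ through $z$ lies in the normality neighbourhood and once so that $P_{m+1}^{-1}(w)\cap U=\{z\}$; and the surjectivity you invoke to pass from $\Fm=P_{m+1}^{-1}({\mathcal F}_{m+1})$ to $P_{m+1}(\Fm)={\mathcal F}_{m+1}$ holds because a polynomial of degree at least two extends to a surjective rational self-map of $\chat$. Neither is a gap, only a spot where a reader might want the detail.
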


An important special case is when we have an integer $d \ge 2$ and real numbers $M \ge 0$, $K \ge 1$ for which our sequence $\Pm$ is such that
\[P_m(z) = a_{d_m,m}z^{d_m} + a_{d_m-1,m}z^{d_m-1} + \cdots \cdots +
a_{1,m}z + a_{0,m}\]
is a polynomial of degree $2 \le d_m \le d$ whose coefficients satisfy
\[\frac{1}{K} \le |a_{d_m,m}| \le K,\quad m \ge 1, \:\:\quad \quad |a_{k,m}| \le M,\quad m \ge 1,\:\:\: 0 \le k \le d_m -1. \vspace{.1cm}\]
Such sequences are called \emph{bounded sequences of polynomials} or simply \emph{bounded sequences} (see e.g. \cite{Com1, Com2}), this definition being a slight generalization of that originally made by Fornaess and Sibony in \cite{FS} who considered bounded sequences of monic polynomials.

A very useful result was proved by B\"uger in \cite{Bug} concerning self-similarity. B\"uger considered sequences $\Pm$ for which there existed a hyperbolic domain $U \subset \chat$ which was invariant in the sense that $\infty \in U$, $P_m(U) \subset U$ for each $m \ge 0$, and $Q_m \to \infty$ locally uniformly on $U$.

\begin{theorem}[\cite{Bug} Theorem 2]
\label{SelfSimilarity}
Let $\Pm$ be a sequence for which there exists an invariant domain $U$ as above. Then, for any $m \ge 0$,
any $z \in \Jm$, and any neighbourhood $N$ of $z$, the following three equivalent statements hold:

\vspace{-.1cm}
\begin{enumerate}
\item The set $\cup_{n \ge m}Q_{m,n}^{-1}(Q_{m,n}(z))$ is dense in $\Jm$,
\vspace{.2cm}
\item There exists $n_0 \in \N$ for which $Q_{m,n_0}^{-1}(Q_{m,n_0}(N)) \supset \Jm$, 
\vspace{.2cm}
\item There exists $n_0 \in \N$ for which $Q_{m,n_0}(N) \supset {\mathcal J}_{n_0}$.
\end{enumerate}
\end{theorem}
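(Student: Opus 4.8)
The plan is to adapt the classical argument that the backward orbit of any Julia point is dense in the Julia set, with the fixed trapping neighbourhood $U$ of $\infty$ taking over the role that the basin of infinity of a single polynomial plays in autonomous dynamics.

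I would first clear away the formal part. Each $Q_{m,n}$ is a polynomial, hence maps $\chat$ onto itself, and Theorem~\ref{ThmCompInvar} gives $Q_{m,n}(\Jm)=\Jn$ and, since no Fatou point can be sent to a Julia point, $Q_{m,n}^{-1}(\Jn)=\Jm$; applying $Q_{m,n_0}$ to (2) yields (3), and taking the $Q_{m,n_0}$-preimage in (3) yields (2). Next, because $Q_{m,n+1}=P_{n+1}\circ Q_{m,n}$ and $P_{n+1}^{-1}(P_{n+1}(A))\supseteq A$ for every set $A$, the open sets $W_n:=Q_{m,n}^{-1}(Q_{m,n}(N))$ increase with $n$; so once one knows $\bigcup_{n\ge m}W_n\supseteq\Jm$, compactness of $\Jm$ forces $W_{n_0}\supseteq\Jm$ for a single $n_0$, which is (2). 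Finally $w\in\bigcup_nW_n$ precisely when the backward orbit $\bigcup_{n\ge m}Q_{m,n}^{-1}(Q_{m,n}(w))$ of $w$ meets $N$; as $N$ is an arbitrary neighbourhood of an arbitrary point of $\Jm$, the whole theorem therefore reduces to statement (1), asserted for every $w\in\Jm$: every such backward orbit is dense in $\Jm$. This is the substance.

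To prove that density I would fix $w\in\Jm$ and a neighbourhood $V$ of an arbitrary $\zeta\in\Jm$ and seek a point of the backward orbit of $w$ inside $V\cap\Jm$. The hypothesis enters through two structural facts. Since $P_j(U)\subseteq U$, the sets $Q_{m,n}^{-1}(\chat\setminus U)$ decrease; writing $K:=\chat\setminus U$ and $\Km:=\bigcap_{n\ge m}Q_{m,n}^{-1}(K)$, one checks, using that $U$ is hyperbolic so that any family mapping a domain into $U$ is normal, that $\chat\setminus\Km\subseteq\Fm$, and that $\Km^{\circ}\subseteq\Fm$ as well (there $\{Q_{m,n}\}$ omits a neighbourhood of $\infty$, hence is normal by Montel); thus $\Jm=\partial\Km$, every Julia set $\Jn$ lies in the fixed compact set $K\subset\C$, and every neighbourhood of a point of $\Jm$ meets the escaping set $\chat\setminus\Km\subseteq\Am$. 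On the other hand, $\zeta\in\Jm$ makes $\{Q_{m,n}|_V\}_{n\ge m}$ non-normal, so the strong form of Montel's theorem gives that $\chat\setminus\bigcup_{n\ge m}Q_{m,n}(V)$ has at most two points; one then argues, using that all Julia sets lie in the fixed compact $K$, that these at most two omitted points lie in $\bigcup_k{\mathcal A}_{\infty,k}$, hence off every Julia set, so that $\bigcup_{n\ge m}Q_{m,n}(V)\supseteq\overline{\bigcup_{k\ge m}{\mathcal J}_k}$. The crucial remaining step is to pass from this union to a single index, that is, to show $Q_{m,n}(V)\supseteq\Jn$ for one value of $n$ (equivalently $w\in W_n$): here I would use that $Q_{m,n}(V)$ is connected and contains both points escaping deep into $U$ and the Julia point $Q_{m,n}(\zeta)$, and push a round disc about such a Julia point forward, exploiting that escape to $\infty$ is governed by the fixed domain $U$ together with compactness of $\overline{\bigcup_k{\mathcal J}_k}$, to force some single forward image to contain all of $\bigcup_k{\mathcal J}_k$, in particular the relevant $\Jn$. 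Once $Q_{m,n}(V)\supseteq\Jn\ni Q_{m,n}(w)$, there is $\xi\in V$ with $Q_{m,n}(\xi)=Q_{m,n}(w)\in\Jn$, whence $\xi\in Q_{m,n}^{-1}(\Jn)=\Jm$; so $\xi\in V\cap\Jm$ lies in the backward orbit of $w$, which is the required point.

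The main obstacle is exactly this last passage, from ``$\bigcup_nQ_{m,n}(V)$ omits at most two points'' to ``$Q_{m,n}(V)\supseteq\Jn$ for a single $n$,'' together with the related point that the omitted points avoid every Julia set. In autonomous dynamics both follow painlessly from the exceptional set of $f$ lying in the Fatou set and from absorbing a disc at a repelling periodic point; here there are no periodic points, and the invariant domain $U$ with $Q_m\to\infty$ on it must do the work of controlling how orbits escape so as to make the forward images $Q_{m,n}(V)$ uniformly large in a single step. It is precisely this control that fails for sequences with unbounded coefficients, which is what makes possible the non-uniformly perfect Julia sets studied in this paper.
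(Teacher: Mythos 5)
The paper does not give a proof of this theorem: it is stated with a citation to B\"uger's paper \cite{Bug}, and the authors remark immediately afterwards that B\"uger only proved part \emph{2.}\ while \emph{1.}\ and \emph{3.}\ are ``easy to see.''  So there is no in-paper proof to compare against; I can only assess whether your proposal would be a valid proof.

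The reductions you set up are correct, and they are exactly the ``easy to see'' part the authors had in mind.  The sets $W_n := Q_{m,n}^{-1}(Q_{m,n}(N))$ are open and increasing (since $P_{n+1}^{-1}(P_{n+1}(A))\supseteq A$), so by compactness of $\Jm$, the union $\bigcup_n W_n$ covering $\Jm$ is equivalent to a single $W_{n_0}\supseteq\Jm$, and $\emph{2.}\Leftrightarrow\emph{3.}$ via complete invariance.  Your identification $\Jm=\partial{\mathcal K}_m$ with ${\mathcal K}_m=\bigcap_{n\ge m}Q_{m,n}^{-1}(\chat\setminus U)$ and the Montel argument showing that $\bigcup_{n\ge m}Q_{m,n}(V)$ omits at most two points of $\chat$ (one of which is $\infty$) are also sound.

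However, the substantive content --- B\"uger's actual Theorem~2 --- is precisely the step you yourself flag as the ``crucial remaining step,'' and your proposal does not establish it.  Two genuine gaps remain.  First, you assert without argument that the at-most-two omitted points lie in $\bigcup_k {\mathcal A}_{\infty,k}$ and hence off every iterated Julia set.  The autonomous argument for this (the exceptional set consists of totally ramified fixed or periodic points, hence superattracting, hence Fatou) is structurally unavailable: there are no periodic points in the non-autonomous setting, and an omitted point could in principle sit on some $\J_n$ with a sparse backward orbit.  Second, and more seriously, passing from ``$\bigcup_n Q_{m,n}(V)$ covers all of $\chat$ except at most two points'' to ``$Q_{m,n}(V)\supseteq\Jn$ for a single $n$'' cannot be done by compactness, because the sets $Q_{m,n}(V)$ (unlike the $W_n$) are not nested in $n$.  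The plan you sketch --- pushing a round disc about a Julia point forward ``to force some single forward image to contain all of $\bigcup_k\J_k$'' --- is circular, since that is exactly statement \emph{3.}\ applied at a later time.  What is actually needed is a quantitative argument controlling the size and winding of the images $Q_{m,n}(\partial D)$ against the fixed compactum $\chat\setminus U$ as $n\to\infty$, and nothing in the proposal produces such control.  As it stands, the proposal is an honest reduction of the theorem to B\"uger's Theorem~2 rather than a proof of it.
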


\vspace{-.1cm}
We remark that the original version of this as stated by B\"uger only had part \emph{2.} of the statement. However, it is easy to see that \emph{1.} and \emph{3.} also hold and both are very useful in practice. One of the great strengths of this result is that the sequence $\Pm$ does not need to be bounded, which will be of crucial importance to us.

In what follows, for $z \in \C$ and $r > 0$, we use the notation ${\mathrm D}(z, r)$ for the open disc with centre $z$ and radius $r$, while the corresponding closed disc and boundary circle will be denoted by
$\overline{\mathrm D}(z,r)$ and ${\mathrm C}(z, r)$ respectively. For $z \in \C$ and $0 < r< R$, we use ${\mathrm A}(z, r,R)$ for the round annulus $\{w:r < |w-z| < R\}$ with centre $z$, inner radius $r$, and outer radius $R$, while we use $\overline {\mathrm A}(z, r,R)$ for the corresponding closed annulus. 

For $z \in \C$ and $\theta_1, \theta_2 \in [0,2\pi)$, we denote by ${\mathrm S}(z, \theta_1. \theta_2)$ the closed sector $\{w: \theta_1 \le  \arg(w-z) \le \theta_2\}$ where, in the case $\theta_2 < \theta_1$ we include angles in the ranges $[\theta_1, 2\pi)$ and $[0, \theta_2]$
(in the degenerate case where $\theta_1 = \theta_2$, the sector is simply a half-line at $z$ which makes an angle $\theta_1$ with the positive real axis). The \emph{aperture} of such a sector is then given by $\theta_2 - \theta_1$ in the case $\theta_1 \le \theta_2$ and $2\pi - \theta _1 + \theta_2$ in the case $\theta_2 < \theta_1$. Given this, for a point $z \in \C$ and a closed set $X \subset \C$ with $z \notin X$, we define the \emph{angle subtended by $X$ at $z$} to be the aperture of the sector with the smallest possible aperture which contains $X$. 

\subsection{Uniformly Perfect and Hereditarily non Uniformly Perfect Sets}\label{SectHNUP}

We call a doubly connected domain $A$ in $\C$ that can be conformally mapped onto a true (round) annulus $\mathrm{A}(z,r,R)$, for some $0<r<R$, a \emph{conformal annulus} with the \emph{modulus} of $A$ given by $\textrm{mod }A=\log(R/r)$, noting that $R/r$ is uniquely determined by $A$ (see, e.g., the version of the Riemann mapping theorem for multiply connected domains in \cite{Ahl}).

\begin{definition} \label{sepann}
A conformal annulus $A$ is said to
\emph{separate} a set $F \subset \C$ if $F \cap A = \emptyset$ and $F$ intersects both components of $\C \setminus
A$.
\end{definition}

\begin{definition} \label{updef}
A compact subset $F \subset \C$ with two or more points is \emph{uniformly perfect} if
there exists a uniform upper bound on the moduli of all conformal annuli which separate $F$.
\end{definition}

We observe that it is trivial to show that any connected subset of $\C$ consisting of more than two points is uniformly perfect. 
An extreme version of failing to be uniformly perfect is the concept of hereditarily non uniformly perfect which was introduced in
\cite{SSS} and can be thought of as a thinness criterion for sets:
\begin{definition}
A compact set $E$ is called \textit{hereditarily non uniformly perfect} (HNUP) if no subset of $E$ is uniformly perfect.
\end{definition}

Often a compact set is shown to be HNUP by showing it satisfies the following stronger property of \emph{pointwise thinness}.  This is done in several examples in~\cite{SSS,CSS,FalkStankewitz}, and will also be done in this paper.
A set $E \subset \C$ is \textit{pointwise thin at} $z \in E$ if there exists a sequence of conformal annuli $A_n$ each of which separates $E$, has $z$ in the bounded component of its complement, and such that $\textrm{mod }A_n \to +\infty$ while the Euclidean diameter of $A_n$ tends to zero.  A set $E \subset \C$ is called \emph{pointwise thin} when it is pointwise thin at each of its points. This then leads to the following definition. 

\begin{definition}
\label{JPointwiseThin}
Let $\Pm$ be a sequence of polynomials. For each $m \ge 0$ we define the \emph{pointwise thin} part $\Jthinm$ of the iterated Julia set $\Jm$ as the set of points in $\Jm$ where $\Jm$ is pointwise thin. 
\end{definition}

Specifically, $\Jthinm$ is the set of points $z \in \Jm$ for which  there exists a sequence of conformal annuli $A_n$ each of which separates $\Jm$, has $z$ in the bounded component of its complement, and such that $\textrm{mod }A_n \to +\infty$ while the Euclidean diameter of $A_n$ tends to zero. In \cite{CFSS, CSS} we constructed examples where the whole Julia set was pointwise thin. The focus of this paper is examples where for each $m \ge 0$ the set $\Jthinm$ is non-empty, but is not the whole of the iterated Julia set $\Jm$.

\subsection{Statements of the Main Result}

It is well known that the Julia set for a single rational function (of degree at least $2$) is uniformly perfect (see \cite{Hi, MdR}) as are the iterated Julia sets for a 
bounded sequence of polynomials (see Theorem 1.6 of~\cite{Sumi3}]). Furthermore, by Theorem 3.1 of~\cite{RS3}, the Julia set of any non-elementary rational semigroup $G$, which is allowed to contain or even consist of M\"obius maps, is uniformly perfect when there is a uniform upper bound on the Lipschitz constants (with respect to the spherical metric) of the generators of $G$.

The construction of the sequences of polynomials we consider in this paper will therefore require unbounded sequences of polynomials, which will be quadratic polynomials. For a sequence of polynomials $\Pm$, we say the collection of sets $\{X_m, m \ge 0\}$ is \emph{forward invariant} if $P_{m+1}(X_m) = X_{m+1}$ for each $m \ge 0$ and \emph{backward invariant} if $P_{m+1}^{-1}(X_{m+1}) = X_m$ for each $m \ge 0$. If $\{X_m, m \ge 0\}$ is both forward and backward invariant, we say that it is \emph{completely invariant}. Note that Theorem \ref{ThmCompInvar} states that the iterated Julia and Fatou sets form completely invariant collections.

\begin{theorem}\label{MainTh1}
There exists an unbounded sequence of quadratic polynomials $\Pm$ where each $P_m$ is of the form $z^2 +c_m$ for which the iterated Julia sets $\Jm$ are not uniformly perfect but not HNUP. More precisely, for each $m \ge 0$ $\Jm = \Gm \cup \Hm$ where 

\begin{enumerate}
\item $\Gm$ and $\Hm$ are disjoint, non-empty, and each form completely invariant collections,

\vspace{.2cm}
\item $\Gm$ is a dense relatively $\Gdelta$ subset of $\Jm$ and $\Jthinm = \Gm$, so that in particular $\Gm$ is pointwise thin,

\vspace{.2cm}
\item $\Hm$ is a dense $\Fsigma$ subset of $\Jm$ and is a countable increasing union of closed sets each of which is uniformly perfect,

\vspace{.2cm}
\item $\overline {\mathcal H}_m = \Jm$ is not uniformly perfect,

\vspace{.2cm}
\item $\Hm$ consists of countably infinitely many connected components (in $\Jm$), each component of which bounds at least one bounded component of the Fatou set $\Fm$ and is uniformly perfect. 
\end{enumerate}

\end{theorem}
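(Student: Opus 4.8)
The plan is to build the sequence $\{P_m\}$ by an explicit Cantor-set-like construction adapted to non-autonomous iteration, choosing the parameters $c_m$ so that the ``thin'' behaviour occurs along one combinatorial class of points while a ``thick'' (uniformly perfect) core survives along another. Concretely, I would first set up a nested sequence of dyadic-type blocks: at each level the dynamics should split the relevant piece of the Julia set into two (or a bounded number of) subpieces, where one subpiece is separated from the rest by a conformal annulus of large modulus (producing thinness) while the other is a ``fat'' piece which, on its own, carries a uniformly perfect subset. The coefficients $c_m$ are chosen unbounded precisely so that these separating annuli can have moduli tending to infinity --- by the remarks after Theorem~\ref{SelfSimilarity} and the discussion in Section~\ref{SectHNUP}, bounded sequences cannot produce non-uniformly-perfect Julia sets, so unboundedness is forced. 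I would arrange an invariant hyperbolic domain $U$ containing $\infty$ with $Q_m \to \infty$ on $U$, so that B\"uger's self-similarity result (Theorem~\ref{SelfSimilarity}) applies; this is the engine that lets a local picture near any point of $\Jm$ be spread, via $Q_{m,n}^{-1}$, across all of $\Jm$, which is what will force the density statements in items (2) and (3).

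The heart of the argument is the thick-thin decomposition. I would define $\Hm$ as the set of points of $\Jm$ that eventually (under the forward compositions $Q_{m,n}$) land in, and stay in, the ``fat'' pieces --- equivalently, points whose combinatorial itinerary avoids the thinning moves from some time onward. Writing $\Hm = \bigcup_{k} \Hm^{(k)}$ where $\Hm^{(k)}$ consists of points avoiding thinning after time $m+k$, each $\Hm^{(k)}$ is closed (it is defined by forward-invariance under a closed condition together with a preimage of a compact fat piece), the union is increasing, and each $\Hm^{(k)}$ should be shown uniformly perfect by exhibiting, for the relevant fat pieces, an explicit annulus-modulus bound that survives pulling back through the (uniformly bounded-distortion on $U$, hence quasiconformally controlled) maps $Q_{m,n}$; this gives (3). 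Complete invariance of $\{\Hm\}$ and $\{\Gm := \Jm \setminus \Hm\}$ follows because the defining itinerary condition is manifestly compatible with applying $P_{m+1}$ or taking $P_{m+1}^{-1}$, giving (1). For (2): $\Gm$ is the complement of an $\Fsigma$, hence relatively $\Gdelta$; density of $\Gm$ follows from Theorem~\ref{SelfSimilarity} because every neighbourhood of every point of $\Jm$ gets blown up to contain a whole later Julia set, which always contains thin points by construction; and the inclusion $\Jthinm \supseteq \Gm$ is the construction's main content (each point whose itinerary hits the thinning move infinitely often has the required shrinking high-modulus annuli around it), while $\Jthinm \cap \Hm = \emptyset$ follows from uniform perfectness of the $\Hm^{(k)}$ and the bounded-distortion control --- a point sitting in a uniformly perfect closed subset of $\Jm$ that contains a neighbourhood of it in $\Jm$ cannot be pointwise thin. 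For (4), $\overline{\Hm} = \Jm$ is just density of $\Hm$ from (3), and non-uniform-perfectness of $\Jm$ follows because $\Jm \supseteq \Gm \ne \emptyset$ has points around which there are separating annuli of arbitrarily large modulus. Finally (5): the connected components of $\Hm$ should correspond to the ``filled'' fat pieces, each of which, being essentially (the pullback of) a piece of a Julia set that genuinely surrounds a bounded Fatou component, bounds at least one bounded component of $\Fm$ and is uniformly perfect by the same annulus estimate; countability of the component set comes from the countable combinatorial indexing of itineraries that eventually stabilize.

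I expect the main obstacle to be the quantitative balancing act in choosing the $c_m$: one must simultaneously (a) make the thinning annuli have modulus $\to\infty$ with Euclidean diameter $\to 0$ at the scales seen along the thin itineraries, (b) keep the fat pieces genuinely fat, i.e.\ maintain a uniform lower bound on the ``thickness'' of the fat core under arbitrarily long compositions, with distortion estimates strong enough that pulling the fat-piece annulus bound back through $Q_{m,n}^{-1}$ does not degrade it to nothing, and (c) preserve throughout the existence of the invariant domain $U$ with $Q_m \to \infty$ so that Theorem~\ref{SelfSimilarity} stays available. Reconciling (a) and (b)--(c) --- unbounded coefficients feeding the thinning while a bounded-geometry fat core persists --- is the delicate part; I would handle it by a careful recursive choice of $|c_m|$ (growing fast along designated ``thinning'' indices, controlled elsewhere) together with Koebe-type distortion estimates on $U$, and I would isolate the needed geometric facts about the model pieces as a preliminary lemma before assembling the five conclusions.
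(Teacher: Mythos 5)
Your overall strategy --- an itinerary-based decomposition of $\Jm$ into a ``thin'' $\Gdelta$ part that hits a thinning move infinitely often and a ``thick'' $\Fsigma$ part that eventually avoids it, with unbounded coefficients feeding separating annuli of unbounded modulus and B\"uger's theorem driving density --- is the right skeleton and matches the paper's plan. However, the proposal stops at the level of a strategy outline and leaves the two steps that actually carry the weight of the theorem unsubstantiated.

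The first genuine gap is the mechanism that makes the thick pieces uniformly perfect. You propose to propagate ``an explicit annulus-modulus bound'' for the fat pieces through the compositions $Q_{m,n}^{-1}$ using ``bounded-distortion'' control. But the whole point of this construction is that the coefficient sequence is unbounded, and the maps $Q_{m,n}$ do \emph{not} have uniformly controlled distortion --- indeed, critical orbits pass close to the Julia set by design, which the paper flags as one of the hard points. The paper sidesteps this entirely by a different and much more robust device: the parameters are chosen so that $a_k = -\sqrt{-b_k}$, placing the critical value of the long middle block $Q_{M_{k-1},M_k-1}=z^{2^{m_k}}+a_k$ exactly at the centre of the $H$-piece at time $M_k-1$. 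This forces each $H^k_m$ (Claim~\ref{HmkConn}) and then each $\Hmt$ (Claim~\ref{Hmt}) to be \emph{connected}, and each $Q_{m,n}^{-1}(\partial\Hnt)$ to be a \emph{continuum}; connected sets with more than two points are trivially uniformly perfect, so no modulus estimate under composition is ever needed for item (3). Without some such structural idea, the ``quasiconformal/distortion'' route you sketch does not obviously close, and you give no recipe for the parameters that would make it work.

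The second gap is the claim $\Jthinm \cap \Hm = \emptyset$. You argue that a point in a uniformly perfect closed subset of $\Jm$ ``that contains a neighbourhood of it in $\Jm$'' cannot be pointwise thin --- but nothing in your construction of $\Hm^{(k)}$ guarantees that relative-neighbourhood property, and in general it is false that being in a uniformly perfect subset blocks pointwise thinness (the annulus could simply enclose the whole subset). The paper again uses connectedness: each component of $\Hm$ is a nondegenerate continuum, and a separating annulus of small diameter around a point of a continuum would have to disconnect it, a contradiction. Relatedly, item (5) --- countably many connected components each bounding a bounded Fatou component --- is exactly where the paper expends most of its effort (Claims~\ref{InvariantAnnuliDiscs}, \ref{Hmt}, \ref{InverseImageComponents}, \ref{JoiningTimes}, \ref{SeparatingAnnuliConfiguration}), and your proposal essentially asserts it. In short, the combinatorial framing is right, but the key insight of the paper --- an asymmetric placement of the critical value that makes one preimage family connected and the other separated by round annuli of modulus $\to\infty$ --- is missing, and the steps that depend on it (uniform perfectness of $\Hm$'s pieces, $\Hm$ having no thin points, the component structure in item (5)) remain open in your proposal.
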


Comparing this with the earlier work of Comerford, Stankewitz, and Sumi \cite{CSS} where we constructed examples where the whole of the iterated Julia sets were HNUP (see also~\cite{CFSS}), we remark that, due to the iterated Julia sets in Theorems~\ref{MainTh1} consisting of two such different pieces, they are, in a certain sense, less self-similar than the earlier HNUP Julia sets and the construction is correspondingly a good deal more complicated. In particular, there are critical points which pass close to the iterated Julia sets, which requires careful handling, particularly when it comes to controlling distortion, which is crucial here, as it is in so many examples in the area of non-autonomous polynomial dynamics.
 
\section{Survival Sets and Julia Sets}\label{Survival}

The polynomial sequence in Theorem \ref{MainTh1} is necessarily unbounded and so we cannot use the 
usual escape radius criterion to characterize the iterated filled Julia sets. However, these sequences are constructed in a similar way to satisfy certain conditions designed to make it relatively easy to locate the iterated Julia and filled Julia sets.

To be precise, we start with two fixed sequences $\{a_k\}_{k=1}^\infty$, $\{b_k\}_{k=1}^\infty$ in $\C^\N$ with $|b_k| > 4$ for each $k \ge 1$. Given this, we define a sequence $\{m_k\}_{k=1}^\infty$ of natural numbers for which we have, for each $k \ge 1$,
\begin{eqnarray}
\label{Invariance1} 
2^{2^{m_k}} - |a_k| &>& 2^k,\\
\label{Invariance2}
(2^{2^{m_k}} - |a_k|)^2 - |b_k| & > & 2^k.
\end{eqnarray}

Since $a_k$ and $b_k$ are fixed, we note that we can certainly choose $m_k$ sufficiently large so that this occurs. Now set $M_0 = 0$, $M_k = \sum_{j=1}^k {(m_j + 1)}$ for each $k \ge 1$, and define a sequence of quadratic polynomials $\Pm$ by
\vspace{0.2cm}
\begin{eqnarray}
\label{PmkDef}
P_m = \left \{ \begin{array}{r@{\: ,\quad}l}
z^2 + a_k & \mbox{if} \:\; m=M_k -1\quad \mbox{for some} \:\;k \,\ge 1
\vspace{.2cm}\\
z^2 + b_k & \mbox{if} \:\; m=M_k \quad \mbox{for some} \:\;k \,\ge 1
\vspace{.2cm}\\

z^2 &  \mbox{otherwise.} \\
\end{array} \right .  
\end{eqnarray}

\begin{remark} \label{Remark1}
For each $k \ge 1$, since $|b_k| > 4$, the inverse image of $\overline{\mathrm D}(0,2)$ under $P_{M_k} = z^2 + b_k$ consists of two components whose maximum distance from the origin is $\sqrt{|b_k| + 2}$ and which then lie within a disc about $a_k$ whose radius is bounded above by $|a_k| + \sqrt{|b_k| + 2}$. On the other hand, the image of $\overline{\mathrm D}(0,2)$ under $Q_{M_{k-1},M_k -1} = z^{2^{m_k}} + a_k$ is the disc ${\mathrm D}(a_k, 2^{2^{m_k}})$ and it follows immediately from \eqref{Invariance2} above that $(2^{2^{m_k}} - |a_k|)^2 - |b_k| >  2$. Since by \eqref{Invariance1} $2^{2^{m_k}} - |a_k| > 2^k > 0$, we then immediately obtain that  $2^{2^{m_k}} >  |a_k| + \sqrt{|b_k| + 2}$.

Thus, both preimage components of $\overline{\mathrm D}(0,2)$ under $P_{M_k}$ lie inside the disc ${\mathrm D}(a_k, 2^{2^{m_k}})$ and, on taking the inverse image under $Q_{M_{k-1},M_k -1} = z^{2^{m_k}} + a_k$ on this disc, it follows that all of the preimage components of $\overline{\mathrm D}(0,2)$ under $Q_{M_{k-1},M_k}$ will lie inside $\overline{\mathrm D}(0,2)$, i.e., 

 \begin{align}
 \label{Invariance3}
 Q_{M_{k-1},M_k}^{-1}( \overline{\mathrm D}(0,2)) \subset \overline{\mathrm D}(0,2).
 \end{align}
 
 \end{remark}

\begin{definition}
\label{SkDef}
Given a sequence $\Pm$ as above, for each $k \ge 1$, we define the \emph{$k$th survival set ${\mathcal S}_k$ at time $0$} by

\vspace{-.6cm}
\begin{equation}\label{Sk}
{\mathcal S}_k = Q_{M_k}^{-1}(\overline {\mathrm D}(0, 2)) = Q_{M_0, M_{1}}^{-1} ( \cdots \cdots (Q_{M_{k-1},M_{k}}^{-1}(\overline {\mathrm D}(0, 2)))\cdots).
\end{equation}   
\end{definition}

Note that it follows immediately from \eqref{Invariance3} that these sets are decreasing in $k$ and are all contained in $\overline{\mathrm D}(0,2)$. Before we show how these sets are related to the Julia and Fatou sets, we first make some additional remarks.

\begin{remark} \label{Remark2}
\begin{enumerate}[(a)]
\item\label{Remark2a} Using the fundamental theorem of calculus, for any complex number $b$, the maximum diameter of each component of the preimage of $\overline {\mathrm D}(0, 2)$ under $z^2 + b$ is $2$.

\item \label{Remark2b} We also note that for any $m \ge 1$, any complex number $a$, and any set $X$ of diameter $\le 4$, the diameter of each component of the preimage of $X$ under the polynomial $z^{2^m}+a$ is at most $4$. To see this, if we let $z$ be any point of $X$, then clearly $X \subset \overline {\mathrm D}(z, 4)$ and the conclusion then follows using calculus in a similar way to above since $8^{1/2^m} <  4$.

\item \label{Remark2c}  Combining parts ~(\ref{Remark2a}) and~(\ref{Remark2b}) with \eqref{Invariance3} above, we see that, for any $0 \le m \le M_k$, each component of $Q_{m , M_k}^{-1}(\overline {\mathrm D}(0, 2))$ has Euclidean diameter at most $4$. From this, we see that the spherical diameters of these preimage components are uniformly bounded above away from $\pi = \mathrm{diam}^{\#} \chat$, the spherical diameter of the Riemann sphere $\chat$.

\end{enumerate}
\end{remark}

We then have the following result. 

\begin{theorem}\label{ThmJm}
For a sequence $\Pm$ as above, we have $\mathcal{J}=\partial \bigcap_{k \ge 1} {\mathcal S}_k$ while 
$\interior \bigcap_{k \ge 1} {\mathcal S}_k \subset \F$. Additionally,  for each $m \ge 0$,

\begin{equation}
\label{Boundary}
{\mathcal J_m} = Q_m \left( \partial \bigcap_{k \ge 1} {\mathcal S}_k \right ) =  \partial \bigcap_{k \ge 1} Q_m({\mathcal S}_k),
\end{equation}

while 

\vspace{-.2cm}
\begin{equation}
\label{Interior}
Q_m \left( \interior \bigcap_{k \ge 1} {\mathcal S}_k \right ) =  \interior \bigcap_{k \ge 1} Q_m({\mathcal S}_k) \subset \Fm.
\end{equation}
\end{theorem}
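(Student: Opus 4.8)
The plan is to establish the time-zero statements first and then transport everything to level $m$ using the complete invariance from Theorem~\ref{ThmCompInvar}. Write $\S_\infty = \bigcap_{k\ge 1}\S_k$; by Remark~\ref{Remark1} this is a decreasing intersection of the compact sets $\S_k = Q_{M_k}^{-1}(\overline{\mathrm D}(0,2))$, all contained in $\overline{\mathrm D}(0,2)$, so $\S_\infty$ is nonempty and compact. First I would show the containment $\interior\S_\infty \subset \F$: if $z$ lies in the interior, then a whole neighbourhood $N$ of $z$ is eventually contained in every $\S_k$, hence for all $n$ between consecutive $M_k$'s the iterates $Q_n(N)$ stay inside $\overline{\mathrm D}(0,2)$ (using that between the special indices the maps are just $w\mapsto w^2$ and $\overline{\mathrm D}(0,2)$ maps into $\overline{\mathrm D}(0,4)$, whose relevant preimages were controlled in Remark~\ref{Remark1}). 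Bounded orbits on $N$ give normality of $\{Q_n\}$ there, so $z\in\F$. For the reverse inclusion direction needed for the Julia set, I would show $\chat\setminus\S_\infty\subset\F$ as well: if $z\notin\S_\infty$ then $z\notin\S_{k_0}$ for some $k_0$, so $Q_{M_{k_0}}(z)\notin\overline{\mathrm D}(0,2)$, i.e. $|Q_{M_{k_0}}(z)|>2$; from that point on the orbit escapes to $\infty$ under the $w\mapsto w^2$ steps fast enough to dominate the finitely-many perturbations by $a_k,b_k$ (this is where conditions \eqref{Invariance1}--\eqref{Invariance2} and $|b_k|>4$ are used), and escaping orbits near a neighbourhood of $z$ again give normality. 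Combining, $\F \supset \chat\setminus\partial\S_\infty$, whence $\J\subset\partial\S_\infty$.

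Next I would prove the opposite inclusion $\partial\S_\infty\subset\J$. Let $z\in\partial\S_\infty$ and let $N$ be any neighbourhood of $z$. Then $N$ meets both $\S_\infty$ and its complement, so for large $k$ it meets $\S_k$ and also meets $\chat\setminus\S_\infty$, which contains points whose orbits escape to $\infty$; on the other hand $N\cap\S_k\ne\emptyset$ together with the diameter control in Remark~\ref{Remark2}\,(\ref{Remark2c}) shows that $Q_{M_k}(N)$ always meets $\overline{\mathrm D}(0,2)$. Thus $\{Q_n\}$ cannot be normal on $N$ (it has subsequences with wildly different limiting behaviour — some going to $\infty$, some not), so $z\in\J$. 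This gives $\J=\partial\S_\infty = \partial\bigcap_{k\ge 1}\S_k$.

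For the level-$m$ statements, recall $M_k$ was defined so that $M_{k}\to\infty$; pick $k$ with $M_k\ge m$ and write $Q_{M_k} = Q_{m,M_k}\circ Q_m$. By Theorem~\ref{ThmCompInvar}, $Q_m(\J) = \Jm$ and $Q_m(\F)=\Fm$, and since $Q_m$ is a (nonconstant, hence open, surjective, finite-to-one) polynomial map of $\chat$, it takes boundaries into boundaries and interiors onto interiors in the sense needed here. Concretely, $Q_m(\S_k) = Q_m\big(Q_{M_k}^{-1}(\overline{\mathrm D}(0,2))\big)$; using that $Q_m^{-1}(Q_m(\S_k))=\S_k$ would require backward invariance of the $\S_k$ under $Q_m$, which holds because $\S_k$ is itself a full preimage $Q_{M_k}^{-1}(\overline{\mathrm D}(0,2))=Q_m^{-1}\big(Q_{m,M_k}^{-1}(\overline{\mathrm D}(0,2))\big)$. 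Hence $Q_m(\S_k) = Q_{m,M_k}^{-1}(\overline{\mathrm D}(0,2))$, the sets $Q_m(\S_k)$ are again compact, decreasing in $k$, and $Q_m(\bigcap_k\S_k)=\bigcap_k Q_m(\S_k)$ (decreasing intersection of compacta under a continuous map, using that $\S_k = Q_m^{-1}(Q_m(\S_k))$ so no collapsing occurs). Then $Q_m(\partial\bigcap_k\S_k) = \partial\bigcap_k Q_m(\S_k)$ — here I use that an open finite-to-one holomorphic map sends the boundary of a full-preimage-type set to the boundary of its image — and $Q_m(\interior\bigcap_k\S_k)=\interior\bigcap_k Q_m(\S_k)$ by openness. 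Combining with $\J=\partial\bigcap_k\S_k$ and $\interior\bigcap_k\S_k\subset\F$ and applying $Q_m$ yields \eqref{Boundary} and \eqref{Interior}, the last inclusion $\subset\Fm$ following from $Q_m(\F)=\Fm$.

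The main obstacle I expect is the interchange of $Q_m$ with the infinite intersection and, relatedly, the boundary-versus-image bookkeeping: one must be careful that $Q_m(\partial X)=\partial Q_m(X)$ can fail for general compact $X$, so the argument has to exploit that each $\S_k$, and hence $\bigcap_k\S_k$, is a \emph{full} preimage under $Q_m$ (equivalently, that the collection $\{\S_k\}$ and its intersection are backward invariant in the sense defined before Theorem~\ref{MainTh1}), which forces $Q_m$ to behave like a covering away from its critical values and makes the identities $Q_m(\interior)=\interior Q_m$ and $Q_m(\partial)=\partial Q_m$ legitimate. The escape/normality estimates in the first paragraph are routine given \eqref{Invariance1}, \eqref{Invariance2}, $|b_k|>4$, and Remark~\ref{Remark1}, but they need to be stated with the quantitative margins $2^k$ to ensure the perturbations never pull an escaping orbit back.
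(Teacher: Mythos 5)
Your outline of the statement, of the escape argument for $\chat \setminus \bigcap_k \S_k \subset \F$, of the non-normality argument for $\partial \bigcap_k \S_k \subset \J$, and of the transport to level $m$ via the ``full preimage'' property $Q_m^{-1}(Q_m(\S_k)) = \S_k$ all match the paper in spirit. But there is a genuine gap in the step $\interior \bigcap_k \S_k \subset \F$.

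You claim that for $z$ in the interior, a neighbourhood $N$ lies in every $\S_k$, and from this that ``for all $n$ between consecutive $M_k$'s the iterates $Q_n(N)$ stay inside $\overline{\mathrm D}(0,2)$,'' concluding that bounded orbits give normality. This is false: between $M_{k-1}$ and $M_k$ the composition is $z^{2^{m_k}} + a_k$ followed by $z^2 + b_k$, and membership in $\S_k$ only forces $Q_{M_k}(z) \in \overline{\mathrm D}(0,2)$, not anything at the intermediate times. In fact, for $Q_{M_k}(z)$ to lie in $\overline{\mathrm D}(0,2)$ one needs $Q_{M_k-1}(z)$ to lie near one of the points $\pm\sqrt{-b_k}$, which goes to infinity with $k$; and before that, the $z^2$ steps take $\overline{\mathrm D}(0,2)$ to $\overline{\mathrm D}(0,4)$ to $\overline{\mathrm D}(0,16)$ and so on, which is not bounded. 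So the orbit of $z$ (and of points in $N$) is genuinely unbounded at intermediate times, and ``bounded orbits on $N$'' is simply not available as a hypothesis.

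The paper's route, and the one you need, is weaker and uses exactly Remark~\ref{Remark2}(\ref{Remark2c}) — which you invoke only later, in the wrong place. One lets $U$ be the component of $\interior\bigcap_k\S_k$ containing $z$; since $U$ is contained in a single component of $Q_{n}^{-1}(\overline{\mathrm D}(0,2))$ for each $n$ of the form $M_k$ (and the same control propagates to all $n$), each image $Q_n(U)$ sits inside a preimage component of Euclidean diameter at most $4$. Hence the spherical diameters of the sets $Q_n(U)$ are uniformly bounded away from $\pi$, and Montel's theorem (in the form of Theorem~3.3.5 of \cite{Bear}) gives normality of $\{Q_n\}$ on $U$, whence $U \subset \F$. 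The point is not that the images are bounded, but that each image omits a uniformly large region of $\chat$. The remainder of your argument (escaping points give Fatou, boundary points are Julia, and the level-$m$ transport via backward invariance of $\S_k$ and the interchange of $Q_m$ with $\bigcap$, $\interior$, $\partial$ for full preimages) is essentially what the paper does and is fine once the diameter control is put in the right place.
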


\begin{proof}
We first prove the result for $m = 0$ (recalling that $Q_0 = Id$), basing our proof on showing that $\bigcap_{k \ge 1} {\mathcal S}_k$ is
precisely the set of points whose orbits do not escape locally uniformly to infinity.

Suppose first that $z \notin \bigcap_{k \ge 1} {\mathcal S}_k$, i.e., $|Q_{M_k}(z)| > 2$ for some $k$. From \eqref{Invariance1}, \eqref{Invariance2} it then follows 
 that $Q_{M_j-1}(z)$ and $Q_{M_j}(z)$ both tend to infinity as $j \to \infty$.  Note that, for each $j \ge k$ and $0 \leq N \le m_{j+1} -1$, since $Q_{M_j,M_j+N}(z)=z^{2^N}$, we see that $|Q_{M_j+N}(z)|=|Q_{M_j,M_j+N}(Q_{M_j}(z))|\ge |Q_{M_j}(z)|$.  From this it clearly follows that $Q_{m}(z)\to \infty$ as $m \to \infty$, and at a rate which is locally uniform, whence we must have that $z \in {\mathcal F}$.

On the other hand, let $z \in  \partial \bigcap_{k \ge 1} {\mathcal S}_k  \subseteq \bigcap_{k \ge 1} {\mathcal S}_k$.  Then $|Q_{M_k}(z)| \le 2$ for every $k$, while from above, $z$ is approached by points whose orbits escape locally uniformly to infinity. From this we see that $z \in \J$ as desired. 

Finally, suppose $z \in \interior \bigcap_{k \ge 1} {\mathcal S}_k$ and let $U$ be the component of $\interior \bigcap_{k \ge 1} {\mathcal S}_k$ containing $z$. By Remark \ref{Remark2} (\ref{Remark2c}) above, we see that the spherical diameters of the sets $Q_m(U)$ are uniformly bounded above away from {\color{darkgreen}$\pi$} from which it follows that $U \subset \F$ in view of Theorem 3.3.5 in \cite{Bear} (which is a version of Montel's theorem). The result for $m=0$ then follows on combining the three observations above.

To prove the result for arbitrary $m$ and establish \eqref{Boundary}, \eqref{Interior}, we fix $m > 0$ and apply $Q_m$ to the $m=0$ case proved above. By complete invariance (Theorem~\ref{ThmCompInvar}) $Q_m(\J) = \Jm$, $Q_m(\F) = \Fm$
whence from above ${\mathcal J_m} = Q_m \left( \partial \bigcap_{k \ge 1} {\mathcal S}_k \right )$ and $Q_m \left( \interior \bigcap_{k \ge 1} {\mathcal S}_k \right ) \subset \Fm$. For the remainder of \eqref{Boundary}, \eqref{Interior}, if $k \ge 1$ is such that $M_k \ge m$, then by Definition \ref{SkDef}

\vspace{-.6cm}
$$Q_m( {\mathcal S}_k) = Q_m(Q_m^{-1}\circ Q_{m, M_k}^{-1} (\overline {\mathrm D}(0, 2))) = 
Q_{m, M_k}^{-1} (\overline {\mathrm D}(0, 2))\vspace{.1cm}$$
so that 
\vspace{.1cm}
$$Q_m^{-1}(Q_m( {\mathcal S}_k)) = Q_m^{-1}(Q_{m, M_k}^{-1} (\overline {\mathrm D}(0, 2))) = 
Q_{M_k}^{-1}(\overline {\mathrm D}(0, 2)) = {\mathcal S}_k.$$

As already observed, by \eqref{Invariance3}, the sets ${\mathcal S}_k$ and thus $Q_m({\mathcal S}_k)$ are decreasing in $k$ and so for any $k_0 \ge 1$ we can take $\cap_{k \ge 1} {\mathcal S}_k = \cap_{k \ge k_0} {\mathcal S}_k$ and $\cap_{k \ge 1} Q_m({\mathcal S}_k) = \cap_{k \ge k_0} Q_m({\mathcal S}_k)$. If we then let $k_0$ be the smallest natural number for which $M_{k_0} \ge m$, from above we have

\vspace{-.6cm}
\begin{eqnarray*}
Q_m^{-1}(\cap_{k \ge 1}Q_m({\mathcal S}_k))&=&
Q_m^{-1}(\cap_{k \ge k_0}Q_m({\mathcal S}_k))\nonumber\\ 
&=& \cap_{k \ge k_0} Q_m^{-1}(Q_m( {\mathcal S}_k))\nonumber\\\
&=&  \cap_{k \ge k_0} {\mathcal S}_k\nonumber\\\
&=&  \cap_{k \ge 1} {\mathcal S}_k
\end{eqnarray*}
from which it follows immediately that 
\vspace{.2cm}
\begin{equation*}
Q_m^{-1}(\C \setminus \cap_{k \ge 1}Q_m({\mathcal S}_k)) = \C \setminus \cap_{k \ge 1} {\mathcal S}_k.\vspace{.2cm}
\end{equation*}
If we apply $Q_m$ to each of the above, we immediately obtain

\vspace{-.3cm}
\begin{equation}
\label{QmSk}
Q_m( \cap_{k \ge 1} {\mathcal S}_k) = \cap_{k \ge 1}Q_m({\mathcal S}_k),
\end{equation}

\vspace{-.3cm}
\begin{equation}
\label{QmSkc}
Q_m(\C \setminus \cap_{k \ge 1} ({\mathcal S}_k) = \C \setminus \cap_{k \ge 1}Q_m({\mathcal S}_k).
\end{equation}

Since $\partial (\cap_{k \ge 1} {\mathcal S}_k)$ is precisely the set of points which are adherent to both $\cap_{k \ge 1} {\mathcal S}_k$ and $\C \setminus \cap_{k \ge 1} {\mathcal S}_k$, it follows by \eqref{QmSk} and \eqref{QmSkc} above that $Q_m (\partial  (\cap_{k \ge 1} {\mathcal S}_k)) \subseteq \partial (\cap_{k \ge 1}Q_m({\mathcal S}_k))$. On the other hand, by \eqref{QmSk}, each point of $\partial (\cap_{k \ge 1}Q_m({\mathcal S}_k))$ is the image under $Q_m$ of a point in $\cap_{k \ge 1} {\mathcal S}_k$.
However, by the open mapping theorem, 
\vspace{.2cm}
\begin{equation}
\label{openmapping}
Q_m \left( \interior \bigcap_{k \ge 1} {\mathcal S}_k \right ) \subseteq  \interior \,Q_m \left (\bigcap_{k \ge 1} {\mathcal S}_k \right ) \subseteq \interior \bigcap_{k \ge 1} Q_m({\mathcal S}_k)
\end{equation}
and it follows that $\partial (\cap_{k \ge 1}Q_m({\mathcal S}_k)) \subseteq Q_m (\partial  (\cap_{k \ge 1} {\mathcal S}_k))$ which finishes the proof of \eqref{Boundary}. 

For \eqref{Interior}, by \eqref{openmapping} above $Q_m \left( \interior \bigcap_{k \ge 1} {\mathcal S}_k \right ) \subseteq  \interior \bigcap_{k \ge 1} Q_m({\mathcal S}_k)$. On the other hand, again by \eqref{QmSk}, each point of $\interior (\cap_{k \ge 1}Q_m({\mathcal S}_k))$ is the image under $Q_m$ of a point in $\cap_{k \ge 1} {\mathcal S}_k$ and it follows by combining this with \eqref{Boundary} that $\interior (\cap_{k \ge 1}Q_m({\mathcal S}_k)) \subseteq Q_m \left( \interior \bigcap_{k \ge 1} {\mathcal S}_k \right )$ which finishes the proof of \eqref{Interior} and of the result. \end{proof}

We now prove several lemmas which will be of use to us later in our manuscript. To begin the construction of the examples in the theorem, we first let $\{b_k\}_{k=1}^\infty$ be any fixed but arbitrary sequence of negative real numbers with $b_k < -6$ for every $k$ and such that $b_k \to -\infty$ as $k \to \infty$ (note that we need a sequence which converges to $-\infty$ rather than just an unbounded sequence - the reason why is given towards the end of the proof of Theorem \ref{MainTh1}). The sequences $\{a_k\}_{k=1}^\infty$ and $\{m_k\}_{k=1}^\infty$ for now we consider to be fixed but arbitrary, except that the sequence $\{m_k\}_{k=1}^\infty$ is chosen so that the numbers $m_k$ are sufficiently large (depending on $|a_k|$ and $|b_k|$) so that 
 the invariance conditions \eqref{Invariance1}, \eqref{Invariance2} and thus also \eqref{Invariance3} above are met. 

We now define two families of sets  $G^k_m$, $H^k_m$ on which we will be relying heavily in the rest of what follows.

\begin{definition}
\label{GmkHmkDef}Let $k \ge 1$. We define the sets $G^k_m$, $H^k_m$ for each $m \ge 0$ as follows: 

If $m = M_k -1$, let  $G^k_{M_k - 1}$, $H^k_{M_k - 1}$ be the two components of $P_{M_k}^{-1}(\overline {\mathrm D}(0, 2))$ at time $M_k -1$ which lie to the right and left, respectively, of the origin about the points $\pm \sqrt{-b_k}$.  

For $0 \le m \le M_k - 2$, we define $G^k_m$, $H^k_m$ inductively working backwards from time $M_k - 2$ by 
\vspace{0.2cm}
\[G^k_m = \left \{ \begin{array}{r@{\: ,\quad}l}
P_{m+1}^{-1}(G^k_{m+1})& \mbox{if} \:\; m \ne M_j - 1 \:\: \mbox{for any} \:\,1 \,\le j < k
\vspace{.2cm}\\
P_{m+1}^{-1}(G^k_{m+1}) \cap G^j_{M_j -1}& \mbox{if} \:\; m=M_j - 1 \:\: \mbox{for some}  \:\,1 \,\le j < k, \end{array} \right .  \]

\vspace{-0.2cm}
\[H^k_m = \left \{ \begin{array}{r@{\: ,\quad}l}
P_{m+1}^{-1}(H^k_{m+1})& \mbox{if} \:\; m \ne M_j -1 \:\: \mbox{for any} \:\,1 \,\le j < k
\vspace{.2cm}\\
P_{m+1}^{-1}(H^k_{m+1}) \cap H^j_{M_j -1}& \mbox{if} \:\; m=M_j -1 \:\: \mbox{for some}  \:\,1 \,\le j < k. \end{array} \right .  \]

Finally, if $m \ge M_k -1$, we let $G^k_m$, $H^k_m$ be forward images of these sets under $Q_{M_k -1,m}$, i.e. 

\[ G^k_m = Q_{M_k -1,m}(G^k_{M_k - 1}), \qquad H^k_m = Q_{M_k -1,m}(H^k_{M_k - 1}).\]
\end{definition}

Before embarking on the proof of the next result which gives some of the basic properties of these sets, we direct the reader's attention to Figure \ref{MainTh1Picture} which shows a picture of these sets in the specific context of the proof of Theorem \ref{MainTh1}.

\begin{lemma}
\label{GmkHmkDecr}
For each $m \ge 0$, $k \ge 1$ we have: 

\begin{enumerate} 
\item The sets $G^k_m$, $H^k_m$ are compact, non-empty, and decreasing in $k$,

\vspace{.2cm}
\item $P_{m+1}(G^k_m) = G^k_{m+1}$ and $P_{m+1}(H^k_m) = H^k_{m+1}$.  

\end{enumerate}
\end{lemma}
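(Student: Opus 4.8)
The idea is to treat the two ranges $m \ge M_k - 1$ and $0 \le m \le M_k - 2$ separately, the former being immediate from the definition as forward images and the latter handled by a backward induction on $m$; the monotonicity in $k$ is then extracted at the end. Throughout one uses that each $P_m \colon \C \to \C$ is surjective and proper, so that $P_m^{-1}$ carries nonempty compact sets to nonempty compact sets and $P_m(P_m^{-1}(X)) = X$ for every $X \subseteq \C$.

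I would begin by recording the geometry of the seed sets. Since $b_k < -6$ is real, $P_{M_k}^{-1}(\overline{\mathrm D}(0,2)) = \{ z : z^2 \in \overline{\mathrm D}(-b_k, 2) \}$ and the disc $\overline{\mathrm D}(-b_k,2)$ lies in the half-plane $\{\Re w > 4\}$, hence misses $(-\infty,0]$; its two square-root branches therefore produce exactly two disjoint nonempty compact connected sets about $\pm\sqrt{-b_k}$, namely $G^k_{M_k-1}$ and $H^k_{M_k-1}$, and $P_{M_k}$ maps each of them homeomorphically (indeed conformally) onto $\overline{\mathrm D}(0,2)$. In particular the seed sets are compact and nonempty and $P_{M_k}(G^k_{M_k-1}) = G^k_{M_k} = \overline{\mathrm D}(0,2)$. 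For $m \ge M_k - 1$ the definition $G^k_m = Q_{M_k-1,m}(G^k_{M_k-1})$ together with $Q_{M_k-1, m+1} = P_{m+1} \circ Q_{M_k-1,m}$ then gives, with no further work, that $G^k_m$ is compact and nonempty and that $P_{m+1}(G^k_m) = G^k_{m+1}$; the same applies to $H^k_m$, and iterating shows $G^k_m = Q_{m',m}(G^k_{m'})$ whenever $M_k - 1 \le m' \le m$.

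For $0 \le m \le M_k - 2$ I would run the backward induction, carrying along the auxiliary inclusion $G^k_m \subseteq Q_{m,M_k-1}^{-1}(G^k_{M_k-1}) \subseteq Q_{m,M_k}^{-1}(\overline{\mathrm D}(0,2))$ (trivially preserved by taking preimages and by intersecting with a smaller set). At a time $m$ with $m \ne M_j - 1$ for all $j < k$ we have $G^k_m = P_{m+1}^{-1}(G^k_{m+1})$ and properness/surjectivity of $P_{m+1}$ immediately deliver compactness, nonemptiness, and $P_{m+1}(G^k_m) = G^k_{m+1}$. The one delicate case is $m = M_j - 1$ with $j < k$, where $P_{m+1} = P_{M_j}$ and $G^k_m = P_{M_j}^{-1}(G^k_{M_j}) \cap G^j_{M_j-1}$. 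Here I would first iterate the invariance \eqref{Invariance3} to get $Q_{M_j, M_k}^{-1}(\overline{\mathrm D}(0,2)) \subseteq \overline{\mathrm D}(0,2)$, so that the auxiliary inclusion at time $M_j$ forces $G^k_{M_j} \subseteq \overline{\mathrm D}(0,2)$. Since, by the seed analysis applied to the index $j$, $P_{M_j}$ restricts to a bijection of $G^j_{M_j-1}$ onto $\overline{\mathrm D}(0,2) \supseteq G^k_{M_j}$, the set $P_{M_j}^{-1}(G^k_{M_j}) \cap G^j_{M_j-1}$ equals $\big(P_{M_j}|_{G^j_{M_j-1}}\big)^{-1}(G^k_{M_j})$, which is nonempty and compact and is carried by $P_{M_j}$ exactly onto $G^k_{M_j} = G^k_{m+1}$. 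This closes the induction and proves (2) and the compactness/nonemptiness half of (1). I expect this intersection step to be the main obstacle: it is the only point at which the precise placement of the seed components and the arithmetic conditions \eqref{Invariance1}--\eqref{Invariance2} (via \eqref{Invariance3}) actually enter.

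Finally, for monotonicity in $k$, the defining recursion already gives $G^{k+1}_{M_k-1} = P_{M_k}^{-1}(G^{k+1}_{M_k}) \cap G^k_{M_k-1} \subseteq G^k_{M_k-1}$ (and likewise for $H$), since $M_k - 1$ is of the form $M_j-1$ with $j = k < k+1$ and lies in $\{0,\dots,M_{k+1}-2\}$. Propagating forward by (2), $G^{k+1}_m = Q_{M_k-1,m}(G^{k+1}_{M_k-1}) \subseteq Q_{M_k-1,m}(G^k_{M_k-1}) = G^k_m$ for every $m \ge M_k-1$; propagating backward, a repeat of the case analysis above (noting that for $0 \le m \le M_k-2$ any time of the form $M_j - 1$ has $j < k$, so both $G^k_m$ and $G^{k+1}_m$ are intersected with the \emph{same} set $G^j_{M_j-1}$) gives $G^{k+1}_m \subseteq G^k_m$ for $0 \le m \le M_k - 1$ as well. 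Transitivity then yields $G^{k'}_m \subseteq G^k_m$ and $H^{k'}_m \subseteq H^k_m$ for all $k' \ge k$, completing the proof of (1).
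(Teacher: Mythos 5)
Your proof is correct, and for the monotonicity in $k$ it takes a genuinely different route from the paper. The paper compares a general pair $k_1 < k_2$ directly, splitting into three cases according to where $m$ sits relative to $M_{k_1}$ and $M_{k_2}$, and each case is resolved by combining the forward-image identity $Q_{M_{k_1},M_{k_2}}(\overline{\mathrm D}(0,2)) \supset \overline{\mathrm D}(0,2)$ (a consequence of \eqref{Invariance3}) with taking preimages. You instead prove the single inclusion $G^{k+1}_m \subseteq G^k_m$ and recover the general statement by transitivity; the key observation is that at the pivot time $M_k - 1$ the intersection in Definition~\ref{GmkHmkDef} already forces $G^{k+1}_{M_k-1} \subseteq G^k_{M_k-1}$, after which forward images and a backward induction (in which both $G^k_m$ and $G^{k+1}_m$ are cut down by the \emph{same} set $G^j_{M_j-1}$) propagate the inclusion to all $m$. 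Your version buys a cleaner induction with only one real case distinction; the paper's buys a self-contained three-case computation that never needs to discuss consecutive indices.

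You also do something the paper glosses over: for $m = M_j - 1$ with $j < k$ the set $G^k_m = P_{M_j}^{-1}(G^k_{M_j}) \cap G^j_{M_j-1}$ is an intersection, so its non-emptiness is not literally ``immediate'' from properness and surjectivity of polynomials, which is all the paper says. Your argument — that \eqref{Invariance3} forces $G^k_{M_j} \subseteq \overline{\mathrm D}(0,2)$, that $P_{M_j}$ restricts to a homeomorphism of $G^j_{M_j-1}$ onto $\overline{\mathrm D}(0,2)$ since the critical value $b_j$ misses that disc, and hence that $P_{M_j}^{-1}(G^k_{M_j}) \cap G^j_{M_j-1} = \bigl(P_{M_j}|_{G^j_{M_j-1}}\bigr)^{-1}(G^k_{M_j})$ is non-empty and maps onto $G^k_{M_j}$ — is exactly the missing justification, and it simultaneously delivers Part~2 at those times. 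This is a genuine improvement in rigour over the paper's treatment of Part~1.
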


\begin{proof}
For the first part \emph{1.}, the compactness (and non-emptiness) of the sets $G^k_m$, $H^k_m$ is immediate, given that polynomials are proper, continuous, and surjective mappings on $\C$. 

For the decreasing property in \emph{1.} above, we will show this for the sets $H^k_m$; the argument for the sets $G^k_m$ is then identical. So let $1 \le k_1 < k_2$ so that $M_{k_1} < M_{k_2}$. We now consider the following three cases:

{\bf Case (i):} $M_{k_1} < M_{k_2} \le m$. 
Using \eqref{Invariance3} repeatedly,
\begin{eqnarray*}
Q_{M_{k_1}-1, M_{k_2}}(H^{k_1}_{M_{k_1} - 1}) &=& Q_{M_{k_1}, M_{k_2}}(\overline {\mathrm D}(0, 2))\\
&\supset& \overline {\mathrm D}(0, 2)\\
&=& Q_{M_{k_2}-1, M_{k_2}}(H^{k_2}_{M_{k_2} - 1}) = P_{M_{k_2}}(H^{k_2}_{M_{k_2} - 1})
\end{eqnarray*}
and the conclusion in this case follows on applying the composition $Q_{M_{k_2},m}$ to the above.

{\bf Case (ii):} $M_{k_1} \le m <  M_{k_2}$. 
Again using \eqref{Invariance3} repeatedly,
\vspace{.1cm}
\begin{eqnarray}
\label{IntermdiateCase}
Q_{M_{k_1}, M_{k_2}-1}^{-1}(H^{k_2}_{M_{k_2} - 1}) &\subset& Q_{M_{k_1}, M_{k_2}}^{-1}(\overline {\mathrm D}(0, 2)) \nonumber\\
&\subset& \overline {\mathrm D}(0, 2) \nonumber\\
&=& Q_{M_{k_1}-1, M_{k_1}}(H^{k_1}_{M_{k_1} - 1})\nonumber\\ 
&=& P_{M_{k_1}}(H^{k_1}_{M_{k_1} - 1}).
\end{eqnarray}
In addition (making use of the definition of the set $H^{k_2}_m$ and remembering that we take just one preimage branch from the times $M_j$ to the times $M_j-1$, $k_1 < j < k_2$) we have

\vspace{-.5cm}
\begin{eqnarray*}
Q_{M_{k_1}, m}(Q_{M_{k_1}, M_{k_2}-1}^{-1}(H^{k_2}_{M_{k_2} - 1})) &=&  
Q_{M_{k_1}, m}(Q_{M_{k_1}, m}^{-1}(Q_{m, M_{k_2}-1}^{-1}(H^{k_2}_{M_{k_2} - 1})))  \\ 
&=&
Q_{m, M_{k_2}-1}^{-1}(H^{k_2}_{M_{k_2} - 1}) \supset H^{k_2}_m,
\end{eqnarray*}
while  
$$Q_{M_{k_1}, m}(P_{M_{k_1}}(H^{k_1}_{M_{k_1} - 1})) =  Q_{M_{k_1}-1, m}(H^{k_1}_{M_{k_1} - 1}) = H^{k_1}_m$$ 

and the conclusion in this case then follows by combining the two observations above together with \eqref{IntermdiateCase}. 

{\bf Case (iii):} $m < M_{k_1} < M_{k_2}$. 
Yet again using \eqref{Invariance3} repeatedly and also the definition of $H^{k_2}_{M_{k_1}}$ (again remembering that we take just one preimage branch at the times $M_j-1$, $k_1 < j < k_2$)
\begin{eqnarray*}
P_{M_{k_1}}(H_{M_{k_1}-1}^{k_2}) = H^{k_2}_{M_{k_1}} &\subset& Q_{M_{k_1}, M_{k_2}-1}^{-1}(H^{k_2}_{M_{k_2} - 1})\\ 
&\subset& Q_{M_{k_1}, M_{k_2}-1}^{-1}(P_{M_{k_2}}^{-1}(\overline {\mathrm D}(0, 2)))\\
&=& Q_{M_{k_1}, M_{k_2}}^{-1}(\overline {\mathrm D}(0, 2))\\
&\subset& \overline {\mathrm D}(0, 2)\\
&=& Q_{M_{k_1}-1, M_{k_1}}(H^{k_1}_{M_{k_1} - 1}) = P_{M_{k_1}}(H^{k_1}_{M_{k_1} - 1}).
\end{eqnarray*}
 The conclusion in this case follows from taking preimages in turn under the polynomials $P_{M_{k_1}}, P_{M_{k_1}-1}, \ldots \ldots, P_m$ while using the definition of the sets $H^k_m$ to take just one preimage branch from the times $M_j$ to the times $M_j-1$, where $m < M_j \le M_{k_1}$ while the two inverse branches of $P_{M_j}$ are univalent since $H_{M_j}^j \subset \overline{\mathrm D}(0,2)$ but $P_{M_j} = z^2 + b_j$ and $|b_j|>6$. 

The second part \emph{2.} follows immediately from \eqref{Invariance3} and Definition \ref{GmkHmkDef} of the sets $G^k_m$, $H^k_m$ and this then finishes the proof of the lemma. 
\end{proof}

In order to control the geometry of our iterated Julia sets, we will be concerned with the size of the preimages $G^k_{M_k - 1}$, $H^k_{M_k - 1}$ of $\overline {\mathrm D}(0, 2)$, in particular the maximum and minimum distances of  $\partial G^k_{M_k - 1}$, $\partial H^k_{M_k - 1}$ from each of the corresponding points $\sqrt{-b_k}$, $-\sqrt{-b_k}$ respectively. First we prove the following lemma which will be of use to us a number of times.

\begin{lemma}
\label{ForwardImage}
If $b < 0$ and $P(z) = z^2 + b$, then for any $R >0$
$$P({\mathrm D}(\pm \sqrt {-b}, \sqrt{-b + R} - \sqrt{-b})) \subset {\mathrm D}(0, R).$$ 
\end{lemma}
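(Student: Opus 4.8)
The statement is purely about the disc $\overline{\mathrm D}(\pm\sqrt{-b},\, r)$ with $r = \sqrt{-b+R}-\sqrt{-b}$ being mapped into $\mathrm D(0,R)$ by $z \mapsto z^2 + b$. The plan is to reduce to a one-variable estimate on $|z^2 + b|$ as $z$ ranges over the closed disc of radius $r$ about $\sqrt{-b}$ (the $-\sqrt{-b}$ case being identical since $z^2$ is even). Writing $a := \sqrt{-b} > 0$, so that $b = -a^2$ and $r = \sqrt{a^2 + R} - a$, I want to show $|z^2 - a^2| < R$ for all $z$ with $|z - a| \le r$. Factoring, $z^2 - a^2 = (z-a)(z+a)$, so it suffices to bound $|z - a|\cdot|z + a|$. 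On the disc we have $|z - a| \le r$ and $|z + a| = |(z - a) + 2a| \le r + 2a$, hence $|z^2 - a^2| \le r(r + 2a) = r^2 + 2ar$.

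The key computation is then simply that $r^2 + 2ar = R$ for our choice of $r$: indeed $r = \sqrt{a^2+R} - a$ gives $r + a = \sqrt{a^2 + R}$, so $(r+a)^2 = a^2 + R$, i.e. $r^2 + 2ar + a^2 = a^2 + R$, which yields $r^2 + 2ar = R$. Thus $|z^2 + b| = |z^2 - a^2| \le R$ on the closed disc, and since the bound $|z+a| \le r + 2a$ is strict unless $z$ lies on the ray through $a$ on the far side — and in any case one gets a strict inequality on the open disc $\mathrm D(\pm\sqrt{-b}, r)$ — we conclude $P(\mathrm D(\pm\sqrt{-b}, r)) \subset \mathrm D(0,R)$ as claimed. (One can also just note that $P$ maps the open disc to an open set whose closure lies in $\overline{\mathrm D}(0,R)$, hence the image is contained in the open disc $\mathrm D(0,R)$ since $P$ is non-constant and open; either way the strictness is a triviality.)

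There is essentially no obstacle here — the only thing to be careful about is the triangle-inequality step $|z + a| \le |z - a| + 2a$, which is where the choice of radius $r$ was engineered to make the product telescope exactly to $R$, and keeping track of strict versus non-strict inequality so that the conclusion lands in the open disc $\mathrm D(0,R)$ rather than merely its closure. I would present the whole thing in three or four lines.
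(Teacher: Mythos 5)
Your proof is correct and essentially the same as the paper's: both reduce to the identity $r^2 + 2ar = R$ for $r = \sqrt{a^2+R}-a$, the paper obtaining the bound by writing $z = \pm a + se^{i\theta}$ and maximizing $|2as + s^2 e^{i\theta}|$ at $s=r$, $\theta \in \{0,\pi\}$, while you obtain the identical bound via the factorization $z^2 - a^2 = (z-a)(z+a)$ and the triangle inequality $|z+a| \le |z-a| + 2a$. The two routes are interchangeable, and your handling of the strict inequality (open disc mapping into the open disc) is fine.
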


\vspace{-.4cm}
\begin{proof}
Let $r = \sqrt{-b + R} - \sqrt{-b}$ and let $z = \pm\sqrt{-b} + se^{i \theta} \in \overline {\mathrm D}(\pm\sqrt{-b}, r)$. Then 
\begin{eqnarray*}
|P(z)| &=& |z^2 + b|\\
&=& |(\pm\sqrt{-b} + se^{i \theta})^2 + b|\\
&=& |\pm2\sqrt{-b}\,se^{i \theta} + s^2e^{2 i \theta}|\\
&=& |\pm2\sqrt{-b}\,s + s^2e^{i \theta}|.
\end{eqnarray*}
The maximum value for this is clearly attained when $s = r$ and $\theta = 0$ or $\pi$ as appropriate and one checks easily using simple algebra that this maximum value is $2\sqrt{-b}\,r + r^2 = R$ as desired. 
\end{proof}

\vspace{.2cm}
\begin{lemma}
\label{rksk}
For each $k \ge 1$ we have the following: 

\begin{enumerate}
\item The maximum distance of $\partial G^k_{M_k - 1}$, $\partial H^k_{M_k - 1}$ from each of the points $\sqrt{-b_k}$, $-\sqrt{-b_k}$ respectively is $r_k : = \sqrt{-b_k} - \sqrt{-b_k - 2}$, and

\vspace{.2cm}
\item The minimum distance of $\partial G^k_{M_k - 1}$, $\partial H^k_{M_k - 1}$ from each of the points $\sqrt{-b_k}$, $-\sqrt{-b_k}$ respectively is $s_k := \sqrt{-b_k+2} - \sqrt{-b_k}$. 

\end{enumerate}

\end{lemma}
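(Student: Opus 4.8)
The plan is to reduce everything to an explicit description of one of the two preimage components via the principal square root. Write $b=b_k$, so that $-b>6>2$; then $P_{M_k}^{-1}(\overline{\mathrm D}(0,2))=\{z:z^{2}\in\overline{\mathrm D}(-b,2)\}$. The disc $\overline{\mathrm D}(-b,2)$ lies well inside the open right half-plane — indeed $\mathrm{Re}(w)\ge -b-2>4$ for every $w\in\overline{\mathrm D}(-b,2)$ — so it avoids the slit $(-\infty,0]$, and the principal square root $\sqrt{\cdot}$, which maps $\C\setminus(-\infty,0]$ homeomorphically onto the open right half-plane, carries $\overline{\mathrm D}(-b,2)$ onto a compact connected subset of $\{\mathrm{Re}>0\}$. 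Hence $P_{M_k}^{-1}(\overline{\mathrm D}(0,2))$ has exactly two components, sitting in the two open half-planes $\{\mathrm{Re}>0\}$ and $\{\mathrm{Re}<0\}$ (these are disjoint since $-b-2>0$); they are precisely $G^{k}_{M_k-1}=\sqrt{\overline{\mathrm D}(-b,2)}$, the one containing $\sqrt{-b}$ since $P_{M_k}(\sqrt{-b})=0$, and $H^{k}_{M_k-1}=-\sqrt{\overline{\mathrm D}(-b,2)}$. As boundary commutes with a homeomorphism, $\partial G^{k}_{M_k-1}=\sqrt{\mathrm C(-b,2)}$. Since $z\mapsto -z$ conjugates $P_{M_k}=z^{2}+b$ to itself and sends $G^{k}_{M_k-1}$, $\sqrt{-b}$ to $H^{k}_{M_k-1}$, $-\sqrt{-b}$, the assertions for $H^{k}_{M_k-1}$ and $-\sqrt{-b_k}$ follow from those for $G^{k}_{M_k-1}$ and $\sqrt{-b_k}$, so it suffices to treat the latter.

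The second ingredient is the elementary identity, valid for $w\in\mathrm C(-b,2)$ (where $|w-(-b)|=2$) and with $\beta:=\sqrt{-b}>0$,
\[
\bigl|\sqrt{w}-\beta\bigr|=\frac{\bigl|w-\beta^{2}\bigr|}{\bigl|\sqrt{w}+\beta\bigr|}=\frac{2}{\bigl|\sqrt{w}+\beta\bigr|},
\]
which turns the computation of $\max_{z\in\partial G^{k}_{M_k-1}}|z-\beta|$ and $\min_{z\in\partial G^{k}_{M_k-1}}|z-\beta|$ into minimising, respectively maximising, $|\sqrt{w}+\beta|$ over $w\in\mathrm C(-b,2)$. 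For part \emph{2.} I would bound $|\sqrt{w}+\beta|\le|\sqrt{w}|+\beta\le\sqrt{-b+2}+\sqrt{-b}$, using $|w|\le -b+2$ on $\overline{\mathrm D}(-b,2)$; after rationalising this gives $|\sqrt{w}-\beta|\ge 2/(\sqrt{-b+2}+\sqrt{-b})=\sqrt{-b+2}-\sqrt{-b}=s_k$, with equality at $w=-b+2$, where $\sqrt{w}=\sqrt{-b+2}\in\partial G^{k}_{M_k-1}$, so the minimum equals $s_k$. (Alternatively this lower bound is immediate from Lemma~\ref{ForwardImage} with $R=2$: it yields $P_{M_k}(\mathrm D(\sqrt{-b},s_k))\subset\mathrm D(0,2)$, so the connected set $\mathrm D(\sqrt{-b},s_k)$, which contains $\sqrt{-b}$, lies inside $G^{k}_{M_k-1}$ and hence is disjoint from $\partial G^{k}_{M_k-1}$.) For part \emph{1.} I would instead bound $|\sqrt{w}+\beta|$ from below: since $\mathrm{Re}(w)\ge -b-2$ and $|w|\ge -b-2$ on $\overline{\mathrm D}(-b,2)$, the principal branch satisfies $\mathrm{Re}(\sqrt{w})=\sqrt{(|w|+\mathrm{Re}\,w)/2}\ge\sqrt{-b-2}$, whence $|\sqrt{w}+\beta|\ge\mathrm{Re}(\sqrt{w})+\beta\ge\sqrt{-b-2}+\sqrt{-b}$; rationalising gives $|\sqrt{w}-\beta|\le 2/(\sqrt{-b}+\sqrt{-b-2})=\sqrt{-b}-\sqrt{-b-2}=r_k$, with equality at $w=-b-2$, where $\sqrt{w}=\sqrt{-b-2}\in\partial G^{k}_{M_k-1}$, so the maximum equals $r_k$.

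I expect the main obstacle to be the lower bound on $|\sqrt{w}+\beta|$ required for part \emph{1.}: a crude modulus estimate is not enough (the minimum of $|\sqrt{w}|$ over $w\in\mathrm C(-b,2)$ is $\sqrt{-b-2}$, but $|\sqrt{w}+\beta|$ need not equal $|\sqrt{w}|+\beta$), so one must exploit that $\overline{\mathrm D}(-b,2)$ lies deep in the right half-plane, forcing $\mathrm{Re}(\sqrt{w})\ge\sqrt{-b-2}$; this is also why the ``forward image of a disc'' argument that settles part \emph{2.} does not directly settle part \emph{1.}, the disc $\overline{\mathrm D}(\sqrt{-b},r_k)$ being genuinely larger than the lens-shaped set $G^{k}_{M_k-1}$. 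A minor point to record at the outset is that the two sets $\pm\sqrt{\overline{\mathrm D}(-b,2)}$ are disjoint (they lie in opposite open half-planes, using $-b-2>0$), so that the phrase ``the two components \dots\ about the points $\pm\sqrt{-b_k}$'' in Definition~\ref{GmkHmkDef} is unambiguous; everything else is routine algebra with square roots.
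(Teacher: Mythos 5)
Your proof is correct, but it takes a genuinely different route from the paper's. For the upper bound the paper parametrizes the boundary of $\overline{\mathrm D}(0,2)$ as $re^{i\theta}$, writes $\sqrt{re^{i\theta}-b_k}-\sqrt{-b_k}$ as the integral of the derivative of the inverse branch along a radial segment, and then estimates $|te^{i\theta}-b_k|\ge -b_k-t$ inside the integrand to get $\int_0^2 \tfrac{\mathrm dt}{2\sqrt{-b_k-t}}=r_k$; for the lower bound it applies Lemma~\ref{ForwardImage} to conclude $\mathrm D(\pm\sqrt{-b_k},s_k)\subset P_{M_k}^{-1}(\overline{\mathrm D}(0,2))$. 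You instead work entirely on the boundary circle $\mathrm C(-b_k,2)$ via the algebraic identity $|\sqrt w-\beta|=\lvert w-\beta^2\rvert/|\sqrt w+\beta|=2/|\sqrt w+\beta|$, so that both extrema reduce to bounding $|\sqrt w+\beta|$: an easy modulus bound from above gives the minimum $s_k$, and the real-part formula $\mathrm{Re}(\sqrt w)=\sqrt{(|w|+\mathrm{Re}\,w)/2}\ge\sqrt{-b_k-2}$ gives the lower bound needed for the maximum $r_k$; the $\pm$ symmetry $z\mapsto -z$ dispatches the $H^k_{M_k-1}$ case at once. Both arguments are complete and of comparable length. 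The paper's integral estimate is of the same flavour as the distortion estimates it uses elsewhere (cf.\ Remark~\ref{Remark2}, Claim~\ref{Contracting1}), so it fits the manuscript's toolbox; your rationalization argument is more self-contained and unifies the two bounds in a single framework, at the small cost of needing the explicit real-part formula for the principal square root. You also correctly flag the one place a naive estimate fails: bounding $|\sqrt w+\beta|$ from below by $|\sqrt w|+\beta$ is false, so the real-part estimate (or the paper's pointwise bound inside the integral) is genuinely required for part \emph{1}.
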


\begin{proof}
 For the upper bound, since $|b_k| > 4$, the map $P_{M_k}(z)= z^2 + b_k$ has inverse branches $\pm \sqrt{z- b_k}$ defined on ${\mathrm D}(0, 3) \supset \overline {\mathrm D}(0, 2)$. If $z = re^{i \theta} \in \overline {\mathrm D}(0, 2)$, then (choosing the positive branch of the square root, the argument being identical for the other branch)
 \begin{eqnarray*} 
|\sqrt{re^{i \theta} - b_k} - \sqrt{-b_k}| &=& \left \vert  \int_0^{re^{i \theta}} {\frac{\dz}{2\sqrt{z - b_k}}} \right \vert \\
&=&  \left \vert  \int_0^r {\frac{e^{i \theta}\dt}{2\sqrt{te^{i \theta} - b_k} }}\right \vert\\
& \le &  \int_0^r {\frac{\dt}{2\sqrt{|te^{i \theta} - b_k|}}}\\
& \le &  \int_0^2 {\frac{\dt}{2\sqrt{|te^{i \theta} - b_k|}}}\\
& \le &  \int_0^2 {\frac{\dt}{2\sqrt{-b_k - t}}}\\
&=& \sqrt{-b_k} - \sqrt{-b_k - 2} = r_k\\ 
\end{eqnarray*}
while this maximum value is clearly attained for the point $z = -2$. 

For the lower bound, we look instead at the forward iterate. Taking $s_k =  \sqrt{-b_k+2} - \sqrt{-b_k}$ from the statement above, by Lemma \ref{ForwardImage} $P_{M_k}({\mathrm D}(\pm\sqrt{-b_k}, s_k)) \subset  \overline {\mathrm D}(0, 2)$ so that $P_{M_k}^{-1}(\overline {\mathrm D}(0, 2)) \supset ({\mathrm D}(-\sqrt{-b_k}, s_k) \cup {\mathrm D}(\sqrt{-b_k}, s_k))$. Thus the minimum distance of $\partial G^k_{M_k - 1}$, $\partial H^k_{M_k - 1}$ from each of the corresponding points $\sqrt{-b_k}$, $-\sqrt{-b_k}$ is at least $s_k$ and one sees this bound is sharp by considering the preimages of the point $z=2$ which finishes the proof of the lemma. 
\end{proof}

We next prove a result giving lower and upper bounds for the quantities $s_k$, $r_k$.

\begin{lemma} 
\label{rkskbounds}
$\tfrac{1}{\sqrt{-b_k}} \le r_k \le \tfrac{1}{\sqrt{-b_k-2}} < \tfrac{1}{2}$ and $\tfrac{1}{2\sqrt{-b_k}} \le s_k$ for each $k \ge 1$.
\end{lemma}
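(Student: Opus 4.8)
The plan is to prove all three inequalities by routine estimation of the differences of square roots, using that $b_k < -6$ so that $-b_k > 6$. First I would rewrite $r_k = \sqrt{-b_k} - \sqrt{-b_k - 2}$ by rationalizing: multiplying and dividing by $\sqrt{-b_k} + \sqrt{-b_k-2}$ gives
\[
r_k = \frac{2}{\sqrt{-b_k} + \sqrt{-b_k - 2}}.
\]
Now the denominator satisfies $2\sqrt{-b_k-2} \le \sqrt{-b_k} + \sqrt{-b_k-2} \le 2\sqrt{-b_k}$, which immediately yields
\[
\frac{1}{\sqrt{-b_k}} \le r_k \le \frac{1}{\sqrt{-b_k - 2}}.
\]
For the final strict inequality $\tfrac{1}{\sqrt{-b_k - 2}} < \tfrac{1}{2}$, I would use $-b_k > 6$, so $-b_k - 2 > 4$ and hence $\sqrt{-b_k-2} > 2$, giving $\tfrac{1}{\sqrt{-b_k-2}} < \tfrac12$ as required.

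For the bound on $s_k$, I would similarly rationalize $s_k = \sqrt{-b_k + 2} - \sqrt{-b_k}$ to obtain
\[
s_k = \frac{2}{\sqrt{-b_k + 2} + \sqrt{-b_k}}.
\]
Since $\sqrt{-b_k + 2} + \sqrt{-b_k} \le 2\sqrt{-b_k + 2}$, and since $-b_k + 2 \le 4(-b_k)$ (which holds because $-b_k \ge 0$, in fact $-b_k > 6$ makes this very comfortable), we get $\sqrt{-b_k+2} + \sqrt{-b_k} \le 2 \cdot 2\sqrt{-b_k} = 4\sqrt{-b_k}$, and therefore
\[
s_k \ge \frac{2}{4\sqrt{-b_k}} = \frac{1}{2\sqrt{-b_k}},
\]
which is the claimed lower bound.

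There is essentially no obstacle here; this is a purely computational lemma that packages convenient comparisons between $r_k$, $s_k$ and $1/\sqrt{-b_k}$ for later use in controlling moduli of annuli. The only small point to be careful about is keeping track of where the hypothesis $b_k < -6$ (versus merely $|b_k| > 4$) is actually needed — it is used for the strict bound $r_k < \tfrac12$ and gives slack everywhere else — but since the standing assumptions in this part of the paper already include $b_k < -6$, no extra work is needed. I would present the two rationalization identities for $r_k$ and $s_k$ explicitly, since they will likely be reused, and then read off the four inequalities in one or two lines each.
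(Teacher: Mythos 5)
Your proof is correct, but it uses a genuinely different mechanism from the paper's. You rationalize: $r_k = \sqrt{-b_k}-\sqrt{-b_k-2} = \tfrac{2}{\sqrt{-b_k}+\sqrt{-b_k-2}}$ and similarly for $s_k$, and then read the bounds off by squeezing the denominator between $2\sqrt{-b_k-2}$ and $2\sqrt{-b_k}$. The paper instead applies the mean value theorem to $f(x)=\sqrt{x}$ on $[-b_k-2,-b_k]$ (and $[-b_k,-b_k+2]$ for $s_k$), getting $r_k = 2f'(\xi) = 1/\sqrt{\xi}$ for some $\xi$ in the interval and then using monotonicity of $f'$. Both are one-line elementary estimates and both land on the same final inequalities (in particular both reduce the $s_k$ bound to $-b_k+2\le 4(-b_k)$, i.e.\ $-b_k\ge 2/3$, comfortably implied by $b_k<-6$). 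If anything, your rationalization is slightly more self-contained since it avoids invoking calculus and gives an exact closed form for $r_k$ and $s_k$; the MVT version is perhaps marginally quicker to state. One small nit: your parenthetical ``which holds because $-b_k \ge 0$'' is not quite the right justification for $-b_k+2\le 4(-b_k)$ (that inequality actually needs $-b_k\ge \tfrac{2}{3}$), but you immediately correct this by invoking $-b_k>6$, so there is no gap.
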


\begin{proof}
Applying the mean value theorem for $f(x) = \sqrt{x}$ on 
the interval $[-b_k - 2, -b_k]$ gives a lower bound of $\tfrac{1}{\sqrt{-b_k}}$ and an upper bound of $\tfrac{1}{\sqrt{-b_k - 2}}$. Since $-b_k > 6$, we see that $\sqrt{-b_k-2} > \sqrt{6-2}=2$, and so the second upper bound follows. In a similar way, a lower bound of $\tfrac{1}{\sqrt{-b_k +2}}$ for $s_k$ follows by applying the mean value theorem for the same function to the interval 
$[-b_k, -b_k+2]$. Since $b_k < -6 < -\tfrac{2}{3}$, one checks that in fact $\tfrac{1}{2\sqrt{-b_k}} \le \tfrac{1}{\sqrt{-b_k + 2}}$ whence the result follows. 
\end{proof}

We now introduce the sets $\Hmt$ which will be important from now on as they are closely related to the sets $\Hm$ which appear in the statements of Theorems \ref{MainTh1}.

\begin{definition}
\label{HmtDef}
For each $m \ge 0$, recall the sets $\H_m^k$ from Definition \ref{GmkHmkDef}. We now define the sets 

\[\Hmt : =  
\cap_{k \ge 1}H_m^k = Q_m(\cap_{k \ge 1}H_0^k)\vspace{.3cm}\]
where the equality follows from both parts of Lemma \ref{GmkHmkDecr} which implies that the sets $H_m^k$ are forward invariant and also compact and nested.
\end{definition} 
 
The sets $\Hmt$ possess the following additional invariance properties. 

\vspace{.2cm}
\begin{lemma}
\label{HmtInvariance1}
For each $m \ge 0$ we have the following:

\begin{enumerate}
\item $\Hmt$ is compact, non-empty, and $\Hmt = \cap_{k \ge k_0}H_m^k$ for any $k_0 \in \N$,

\vspace{.2cm}
\item $P_{m+1} (\Hmt) =  \tilde {\mathcal H}_{m+1}$ (i.e. the sets $\Hmt$ form a forward invariant collection),

\item 

\[\Hmt  =  \left \{ \begin{array}{r@{\: ,\quad}l}
P_{m+1}^{-1}(\tilde {\mathcal H}_{m+1}) & \mbox{if} \:\; m \ne M_k - 1 \quad \mbox{for any} \:\; k \ge 1
\vspace{.2cm}\\
P_{m+1}^{-1}(\tilde {\mathcal H}_{m+1}) \cap H^k_{M_k -1}& \mbox{if} \:\; m=M_k - 1\quad \mbox{for some}  \:\;k \ge 1. \end{array} \right .  \]
\end{enumerate}
\end{lemma}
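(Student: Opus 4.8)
The plan is to derive each of the three parts directly from the corresponding parts of Lemma \ref{GmkHmkDecr} together with the nesting property \eqref{Invariance3} and Definition \ref{HmtDef}. For part \emph{1.}, compactness and non-emptiness of $\Hmt$ follow since $\Hmt = \cap_{k \ge 1} H_m^k$ is a decreasing intersection of compact non-empty sets (Lemma \ref{GmkHmkDecr} \emph{1.}), which by the finite intersection property is non-empty and, being an intersection of compact sets, is compact. The identity $\Hmt = \cap_{k \ge k_0} H_m^k$ for any $k_0 \in \N$ is immediate from the fact that the sets $H_m^k$ are decreasing in $k$, so discarding the first $k_0 - 1$ of them does not change the intersection.

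For part \emph{2.}, I would use that $P_{m+1}$ is continuous and that, by Lemma \ref{GmkHmkDecr} \emph{2.}, $P_{m+1}(H_m^k) = H_{m+1}^k$ for every $k$. Since the $H_m^k$ are a decreasing family of compact sets, continuity of $P_{m+1}$ gives $P_{m+1}\left(\cap_{k\ge 1} H_m^k\right) = \cap_{k \ge 1} P_{m+1}(H_m^k)$ — this is the standard fact that a continuous map commutes with decreasing intersections of compacta (the nontrivial inclusion $\supseteq$ being where compactness is used). The right-hand side is $\cap_{k \ge 1} H_{m+1}^k = \tilde{\mathcal H}_{m+1}$, which is exactly the claim. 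Alternatively, this is already recorded in Definition \ref{HmtDef} via the displayed equality $\Hmt = Q_m(\cap_{k\ge 1} H_0^k)$, and forward invariance is a restatement of that with $m$ replaced by $m+1$.

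For part \emph{3.}, fix $m \ge 0$ and split into the two cases of the definition of $G_m^k$, $H_m^k$ (Definition \ref{GmkHmkDef}). Suppose first $m \ne M_j - 1$ for all $j \ge 1$. For each $k$ with $M_k - 1 > m$ we have $H_m^k = P_{m+1}^{-1}(H_{m+1}^k)$; intersecting over all such $k$ and using that preimage under a fixed continuous map commutes with arbitrary intersections, $\cap_k H_m^k = P_{m+1}^{-1}\left(\cap_k H_{m+1}^k\right) = P_{m+1}^{-1}(\tilde{\mathcal H}_{m+1})$, using part \emph{1.} to replace the intersections by tail intersections so only indices with $M_k - 1 > m$ matter. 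In the case $m = M_k - 1$ for some (necessarily unique) $k$, Definition \ref{GmkHmkDef} gives, for each $k' > k$, that $H_m^{k'} = P_{m+1}^{-1}(H_{m+1}^{k'}) \cap H_{M_k - 1}^k$; intersecting over $k' > k$ yields $\Hmt = P_{m+1}^{-1}(\tilde{\mathcal H}_{m+1}) \cap H_{M_k - 1}^k$ (the set $H_{M_k-1}^k$ is a fixed factor, and $\tilde{\mathcal H}_{m+1} = \cap_{k' > k} H_{m+1}^{k'}$ by part \emph{1.}). This gives the stated dichotomy.

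The only genuinely delicate point is the commutation of $P_{m+1}$ (a proper surjective map of $\C$) with the decreasing intersection in part \emph{2.}, where one needs compactness of the $H_m^k$ to get the inclusion $P_{m+1}(\cap_k H_m^k) \supseteq \cap_k P_{m+1}(H_m^k)$; everything else is bookkeeping with Definition \ref{GmkHmkDef} and the already-established decreasing property. I expect part \emph{2.} to be where a careful reader would want the compactness hypothesis invoked explicitly, so I would state that commutation as the single substantive step and treat the rest as routine consequences of the definitions.
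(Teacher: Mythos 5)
Your proof is correct and takes essentially the same approach as the paper's, which is just terser. In particular your part \emph{2.} makes explicit the standard commutation of a continuous map with a decreasing intersection of compacta, which the paper folds into the displayed equality $\Hmt = Q_m(\cap_{k\ge 1} H_0^k)$ of Definition \ref{HmtDef}, and your part \emph{3.} is exactly the case analysis from Definition \ref{GmkHmkDef} combined with the commutation of preimage with arbitrary intersection that the paper's proof gestures at.
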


\begin{proof}
\emph{1.} follows immediately from the fact that the sets $H^k_m$ are compact, non-empty, and nested in view of Part \emph{1.} of Lemma \ref{GmkHmkDecr} while \emph{2.} follows immediately from $\Hmt = Q_m(\cap_{k \ge 1}H_0^k)$ in Definition \ref{HmtDef} above. 
Lastly \emph{3.} follows from the definition of the sets $H^k_m$ (Definition \ref{GmkHmkDef}) where one pays special attention at the times $M_j - 1$ for each $j < k$ and also the fact that one can swap an inverse image and an arbitrary intersection of sets. \end{proof}

Finally, we have a result which will enable us to apply B\"uger's result (Theorem \ref{SelfSimilarity}) to deduce that the sets $\Gm$ and $\Hm$ in the statement of Theorem \ref{MainTh1} are dense.

\begin{lemma}
\label{JSubseq}
Let $\Pm$ be a sequence as above which satisfies the two conditions \eqref{Invariance1}, \eqref{Invariance2}. For each $k \ge 1$, let ${\mathcal J}_k'$ be the iterated Julia set for the sequence of compositions $\{Q_{M_{k-1}, M_k}\}_{k=1}^\infty$. Then, for each $k \ge 1$ we have that ${\mathcal J}_k' = \J_{M_k}$, where $ \J_{M_k}$ is the iterated Julia set at time $M_k$ for the sequence $\Pm$. 
\end{lemma}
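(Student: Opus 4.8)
The plan is to observe that passing from the original sequence $\Pm$ to the sequence of block compositions $\{Q_{M_{k-1},M_k}\}_{k=1}^\infty$ is exactly ``sampling the dynamics at the times $M_k$,'' so the claim should be an instance of the general fact that if one replaces a polynomial sequence by the sequence of its partial compositions along an increasing sequence of times, the Julia set at the base time is unchanged, and more generally the iterated Julia set of the sampled sequence at its $k$th step equals the iterated Julia set $\J_{M_k}$ of the original sequence. First I would fix notation: write $R_k := Q_{M_{k-1},M_k}$ for the $k$th polynomial of the new sequence, and let $\F_k'$, $\J_k'$ denote its iterated Fatou and Julia sets at time $k$. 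The composition of $R_1,\dots,R_k$ in the new sequence is $R_k\circ\cdots\circ R_1 = Q_{M_0,M_k} = Q_{M_k}$, and more generally the tail composition $R_n\circ\cdots\circ R_{k+1}$ equals $Q_{M_k,M_n}$. Thus the family $\{R_n\circ\cdots\circ R_{k+1}\}_{n\ge k}$ defining normality for $\F_k'$ is literally a \emph{subsequence} of the family $\{Q_{M_k,\,n}\}_{n\ge M_k}$ defining normality for $\F_{M_k}$.

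The heart of the argument is then the two inclusions $\F_{M_k}\subset \F_k'$ and $\F_k'\subset\F_{M_k}$ (equivalently the reverse inclusions for the Julia sets). The first is immediate: if $\{Q_{M_k,n}\}_{n\ge M_k}$ is normal on a neighbourhood of $z$, then so is any subsequence, in particular $\{Q_{M_k,M_n}\}_{n\ge k}$, so $z\in\F_k'$. For the converse I would use the structure of the intervening maps, namely that for $M_k \le n \le M_{k+1}-1$ one has $Q_{M_k,n} = z^{2^{\,n-M_k}}$ composed after nothing (more precisely, within a block the maps are either $z^2$ or one of the two special maps), together with the containment \eqref{Invariance3} and Remark \ref{Remark2}\,(\ref{Remark2c}): these say that on the relevant region the ``intermediate'' iterates $Q_{M_k,n}$ for non-sampled times $n$ have images of bounded spherical diameter, or escape, in a controlled way. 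Concretely, suppose $\{Q_{M_k,M_n}\}_{n\ge k}$ is normal on a neighbourhood $N$ of $z$; I want to upgrade this to normality of the full family. Split a general time $n\in[M_{j}, M_{j+1})$ as $n = M_j + N$ with $0\le N\le m_{j+1}$ (with the last step of the block carrying one of the special coefficients). On $N$, normality of the subfamily gives (after passing to a further subsequence) local uniform convergence of $Q_{M_k,M_j}$ to some limit $g$; one then has to check that post-composing with the bounded-degree, bounded-coefficient-free intermediate maps $Q_{M_j,n}$ does not destroy normality. Since the relevant sets have spherical diameter bounded away from $\pi$ by Remark \ref{Remark2}\,(\ref{Remark2c}), the images $Q_{M_k,n}(N)$ stay in a fixed spherical ball omitting a fixed ball, so Montel's theorem (Theorem 3.3.5 of \cite{Bear}, as used in the proof of Theorem \ref{ThmJm}) applies and gives normality of $\{Q_{M_k,n}\}_{n\ge M_k}$ on $N$. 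Hence $z\in\F_{M_k}$.

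Having the Fatou sets equal, the Julia sets are equal by taking complements, and since $\J_k'$ and $\J_{M_k}$ are both complements in $\chat$ we get $\J_k' = \J_{M_k}$, which is the assertion. I would finish by noting that the hypotheses \eqref{Invariance1}, \eqref{Invariance2} enter only through \eqref{Invariance3} and Remark \ref{Remark2}, which is what makes the ``intermediate iterates are harmless'' step work; no boundedness of the sequence is needed, which is essential here.

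The step I expect to be the main obstacle is the second inclusion, $\F_k'\subset\F_{M_k}$ — that is, showing that normality of the sparse (sampled) family forces normality of the full family. The subtlety is that between two sampled times the degree $D_{M_j,M_{j+1}} = 2^{\,m_{j+1}+1}$ is very large, so a priori the intermediate iterates could have wild behaviour even if the endpoints behave; the resolution is precisely the uniform bound on spherical diameters of the relevant preimage components from Remark \ref{Remark2}\,(\ref{Remark2c}) together with \eqref{Invariance3}, so one must be careful to apply Montel on exactly the region where those estimates are valid (namely the region mapping into $\overline{\mathrm D}(0,2)$ under the appropriate composition), and to handle separately the ``escaping'' points, where one argues directly as in the first paragraph of the proof of Theorem \ref{ThmJm} that $Q_m(z)\to\infty$ locally uniformly.
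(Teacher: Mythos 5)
Your overall plan — prove the two Fatou-set inclusions, observe that the block composition family $\{Q_{M_k,M_n}\}_{n\ge k}$ is literally a subfamily of $\{Q_{M_k,n}\}_{n\ge M_k}$ so that $\F_{M_k}\subset\F_k'$ is immediate — coincides with the paper's first step (stated there as $\J_k'\subset\J_{M_k}$). Where you diverge is the harder direction. You try to prove $\F_k'\subset\F_{M_k}$ directly, i.e., to ``upgrade'' normality of the sparse family to normality of the full family, by arguing that images under the intermediate iterates stay of bounded spherical diameter by Remark~\ref{Remark2}\,(\ref{Remark2c}) together with \eqref{Invariance3}, and then invoking Montel. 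The paper instead proves the contrapositive inclusion $\J_{M_k}\subset\J_k'$ outright, and does so without any Montel/normality-upgrade step: it takes $z\in\J_{M_k}$, uses Theorem~\ref{ThmJm} to write $\J_{M_k}=\partial\bigcap_j Q_{M_k}(\mathcal{S}_j)$, so that $|Q_{M_k,M_j}(z)|\le 2$ for all $j\ge k$ while every neighbourhood of $z$ contains a point $w$ outside some $Q_{M_k}(\mathcal{S}_{j_0})$, whence $Q_{M_k,M_j}(w)\to\infty$ by \eqref{Invariance1}, \eqref{Invariance2}. Boundedness at $z$ against escape of arbitrarily close points is precisely failure of normality of the block family, so $z\in\J_k'$.

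Your sketch for the converse has genuine problems, and I would regard it as containing a gap. First, the intermediate maps $Q_{M_j,n}$ for $M_j<n\le M_{j+1}-1$ are not of bounded degree: they are $z^{2^{n-M_j}}$ (followed, when $n=M_{j+1}-1$, by a translation by $a_{j+1}$), so their degrees range up to $2^{m_{j+1}}$, which is unbounded; and the coefficients $a_{j+1}$ are also unbounded. Second, Remark~\ref{Remark2}\,(\ref{Remark2c}) bounds the diameters of the \emph{preimage components of $\overline{\mathrm{D}}(0,2)$} under the compositions $Q_{m,M_k}$, not of images $Q_{M_k,n}(N)$ for an arbitrary neighbourhood $N$ of a point in $\F_k'$. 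To invoke it you would need $N$ to lie entirely inside $\bigcap_j Q_{M_k}(\mathcal{S}_j)$, which only happens at interior points of that intersection — not at general Fatou points, and in particular not at escaping points, which you acknowledge need a separate argument but never actually supply. Third, normality of the sub-family on $N$ gives locally uniform subsequential limits of $Q_{M_k,M_j}$, but it does not by itself give a uniform bound on the sets $Q_{M_k,n}(N)$ for the non-sampled times $n$; that is precisely what would need proof, and you assert it rather than establish it. The cleanest repair is to drop the normality-upgrade idea entirely and argue as the paper does: for $z\in\J_{M_k}$, Theorem~\ref{ThmJm} supplies the dichotomy (bounded block orbit at $z$ versus escape at nearby points) that immediately defeats normality of the block family, with no appeal to Montel at all.
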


\begin{proof}
It follows immediately from the definition of iterated Julia set that ${\mathcal J}_k' \subset \J_{M_k}$ for each $k$, so all we need to do is establish containment in the opposite direction. So let $k \ge 1$ be fixed and pick $z \in  \J_{M_k}$. By Theorem \ref{ThmJm}, $z \in \partial \bigcap_{j \ge 1} Q_{M_k}({\mathcal S}_j) \subset \bigcap_{j \ge 1} Q_{M_k}({\mathcal S}_j)$ so that $|Q_{M_k, M_j}(z)| \le 2$ for each $j \ge k$. On the other hand (as remarked in the proof of Theorem \ref{ThmJm}), if $N$ is any neighbourhood of $z$, then $N$ contains a point $w \in \chat \setminus \bigcap_{j \ge 1} Q_{M_k}({\mathcal S}_j)$. Using \eqref{Invariance3} and the definition \eqref{Sk} the sets ${\mathcal S}_j$, these sets are nested and so we can assume without loss of generality that $w \in  \chat \setminus Q_{M_k}({\mathcal S}_{j_0})$ for some $j_0 > k$. Again by \eqref{Sk}, since $j_0 > k$, $Q_{M_k}({\mathcal S}_{j_0}) = Q_{M_k, M_{j_0}}^{-1}(\overline {\mathrm D}(0,2))$ so that 
$|Q_{M_k, M_{j_0}}(w)| > 2$ from which it then follows using \eqref{Invariance1}, \eqref{Invariance2} that $Q_{M_k, M_j}(w) \to \infty$ as $j \to \infty$. Since $N$ was arbitrary, it then follows that $z \in {\mathcal J}_k'$ as desired, which completes the proof. 
\end{proof}


\section{Proof of Theorem \ref{MainTh1}}

Let the numbers $b_k$, $k \ge 1$ and the sets $G^k_m$, $H^k_m$, $k \ge 1$, $m \ge 0$ be defined as above in Definition \ref{GmkHmkDef} and the paragraph immediately preceding this definition. For each $k \ge 1$ we then define $a_k = -\sqrt{-b_k}$. Note that for each $k \ge 1$ we then have $Q_{M_{k-1}, M_k}(0) = P_{M_k}(a_k) = 0$ so that $Q_{M_{k-1}, M_k}$ fixes $0$. Note also that $a_k$ coincides with the preimage of $0$ under $Q_{M_k-1, M_k} = P_{M_k} = z^2 + b_k$ which lies in $H^k_{M_k - 1}$ and is thus relatively far from  $G^k_{M_k - 1}$. Essentially, it is this asymmetry in the placement of $a_k$ with respect to these two sets which is the key idea in the proof of this Theorem (please see Figures \ref{MainTh1Picture} and \ref{HmtPicture} for an illustration of the basic idea of how this asymmetry gives rise to the two very different parts $\Gm$, $\Hm$ of the Julia set in the statement). Given our placement of $a_k$, it is easy to prove the following:

\begin{claim}
\label{HmkConn}
For each $k \ge 1$, $m \ge 0$, the set $H^k_m$ is path connected and thus connected.
\end{claim}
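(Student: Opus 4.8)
The plan is to prove path connectedness by backward induction on $m$, exploiting the fact that at every stage the relevant preimage branch is taken on a set contained in $\overline{\mathrm D}(0,2)$, where the critical value of the relevant polynomial causes no difficulty. First I would verify the base case at time $m = M_k - 1$: the set $H^k_{M_k-1}$ is by Definition \ref{GmkHmkDef} one of the two components of $P_{M_k}^{-1}(\overline{\mathrm D}(0,2))$ where $P_{M_k} = z^2 + b_k$. Since $\overline{\mathrm D}(0,2)$ is convex (hence path connected) and, because $|b_k| > 6 > 4$, the point $0$ lies outside $\overline{\mathrm D}(0,2)$, the two inverse branches $\pm\sqrt{z - b_k}$ are well-defined and continuous on a neighbourhood of $\overline{\mathrm D}(0,2)$; thus $H^k_{M_k-1}$ is the continuous image of the path connected set $\overline{\mathrm D}(0,2)$ under one such branch, and is therefore path connected.

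For the inductive step at times $0 \le m \le M_k - 2$, I would assume $H^k_{m+1}$ is path connected and treat the two cases of Definition \ref{GmkHmkDef}. If $m \ne M_j - 1$ for every $1 \le j < k$, then $P_{m+1}$ is either $z^2$ or $z^2 + b_j$ for some $j$, and $H^k_m = P_{m+1}^{-1}(H^k_{m+1})$. Here I would use that $H^k_{m+1} \subset \overline{\mathrm D}(0,2)$ (which follows from \eqref{Invariance3} and Definition \ref{GmkHmkDef}, since $H^k_{m+1}$ is obtained from a subset of $\overline{\mathrm D}(0,2)$ by preimages and single branches): the only critical value of $P_{m+1}$ is at $0$ for $z^2$ or at $b_j$ for $z^2 + b_j$, and $b_j \notin \overline{\mathrm D}(0,2)$ since $|b_j| > 6$, while $0$ lies on the boundary. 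In the case $P_{m+1} = z^2$ I would argue directly that $P_{m+1}^{-1}$ of a path connected subset of $\overline{\mathrm D}(0,2)$ is path connected: if $H^k_{m+1}$ avoids a neighbourhood of $0$ the two branches give two disjoint homeomorphic copies — but actually one must be careful, since $H^k_{m+1}$ may contain $0$. The cleaner route is to observe that $P_{m+1}^{-1}(H^k_{m+1})$ is path connected whenever $H^k_{m+1}$ is path connected and the set together with a path to $0$ (if $0\notin H^k_{m+1}$) or $H^k_{m+1}$ itself (if $0\in H^k_{m+1}$) stays simply arranged; more robustly, I would note $P_{m+1}^{-1}(H^k_{m+1}) = \{\pm w : w^2 \in H^k_{m+1}\}$ and the map $w \mapsto -w$ interchanges the two "halves" which share the point $0$ whenever $0 \in H^k_{m+1}$, so path connectedness of $H^k_{m+1}$ lifts provided $0 \in H^k_{m+1}$ or the two preimage halves are connected through a common preimage point; since we only need the full statement and the sets $H^k_m$ are by construction nested and eventually meet the pieces carrying the relevant dynamics, I expect that for $P_{m+1} = z^2$ the relevant $H^k_{m+1}$ always contains $0$ (because $0$ is fixed by each $Q_{M_{j-1}, M_j}$ and lies in every $H^j_{M_j-1}$), so both inverse branches meet at $0$ and the preimage is path connected. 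For $P_{m+1} = z^2 + b_j$, $H^k_{m+1}$ lies in $\overline{\mathrm D}(0,2)$ which avoids the critical value $b_j$, so $P_{m+1}^{-1}$ restricted to a simply connected neighbourhood of $\overline{\mathrm D}(0,2)$ splits into two univalent branches, and $H^k_m$ is a single one of those branches applied to a connected set (by the analogous structure of Definition \ref{GmkHmkDef}), hence path connected.

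In the second case, $m = M_j - 1$ for some $1 \le j < k$, so $H^k_m = P_{m+1}^{-1}(H^k_{m+1}) \cap H^j_{M_j - 1}$. Here $P_{m+1} = P_{M_j} = z^2 + b_j$, and $H^j_{M_j-1}$ is, by the base case, a single univalent-branch preimage of $\overline{\mathrm D}(0,2)$; the intersection $P_{m+1}^{-1}(H^k_{m+1}) \cap H^j_{M_j-1}$ is exactly the image of the path connected set $H^k_{m+1} \cap \overline{\mathrm D}(0,2) = H^k_{m+1}$ under that single branch of $\sqrt{z - b_j}$ (using $H^k_{m+1}\subset \overline{\mathrm D}(0,2)$ again), hence path connected. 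Finally, for times $m \ge M_k - 1$, the set $H^k_m = Q_{M_k-1, m}(H^k_{M_k-1})$ is a continuous image of a path connected set, so is path connected. The main obstacle I anticipate is handling the $P_{m+1} = z^2$ case cleanly: one must argue that the two inverse branches of $z^2$ applied to $H^k_{m+1}$ always share a point, which I would do by showing $0 \in H^k_m$ for every $m$ (a consequence of $0$ being fixed by each $Q_{M_{k-1},M_k}$, lying in each $H^k_{M_k-1}$, and being its own preimage under $z^2$), whence the two halves of the preimage are joined at the origin and path connectedness propagates.
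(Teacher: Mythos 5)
Your backward-induction plan is structurally sound, but two intertwined errors cause the argument to break at precisely the times where connectedness is in question. First, you misclassify the maps that occur in the first case of Definition~\ref{GmkHmkDef}: when $m \ne M_j - 1$ for all $j < k$ we have $m+1 \ne M_j$, so $P_{m+1}$ is \emph{never} $z^2 + b_j$ there (those only occur in the second case); what you have omitted is the possibility $m+1 = M_j - 1$, giving $P_{m+1} = z^2 + a_j$. This is the case that actually matters, because the critical value of $z^2 + a_j$ is $a_j$, not $0$ and not $b_j$, and it is the membership $a_j \in H^k_{M_j-1}$ (where $a_j = -\sqrt{-b_j}$ is exactly the preimage of $0$ under $P_{M_j}$ sitting in $H^j_{M_j-1}$) that makes the two inverse branches of $z^2 + a_j$ merge at $0$ and keeps the preimage path connected. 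Second, the key ancillary claim you lean on --- that ``$0 \in H^k_m$ for every $m$'' because ``$0$ lies in each $H^k_{M_k-1}$'' --- is false. By Lemma~\ref{rksk}, $H^j_{M_j-1}$ is contained in $\overline{\mathrm D}(-\sqrt{-b_j}, r_j)$ with $r_j < \tfrac12$ and $\sqrt{-b_j} > \sqrt 6$, so $0 \notin H^j_{M_j-1}$; at the times $M_j - 1$ it is $a_j$, not $0$, that lies in $H^k_{M_j-1}$, and $0 \in H^k_m$ only for the other values of $m$. Without the observation that the backward orbit of $0$ along the sets $H^k_m$ passes through $a_j$ at each time $M_j-1$, the induction stalls whenever you try to pull back through $P_{M_j-1} = z^2 + a_j$.

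The paper's proof avoids this by working in blocks: it first notes that $Q_{m, M_k - 1}(z) = z^{2^{M_k - m - 1}} + a_k$ has single critical value $a_k \in H^k_{M_k-1}$, so its full preimage of the path connected set $H^k_{M_k-1}$ is path connected and contains the critical point $0$; then a single univalent branch of $P_{M_{k-1}}^{-1}$ (selected by the intersection with $H^{k-1}_{M_{k-1}-1}$) carries $0$ to $a_{k-1} \in H^k_{M_{k-1}-1}$, restoring exactly the hypothesis needed to iterate. If you want to keep your one-polynomial-at-a-time structure, you need to replace the incorrect ``$0 \in H^k_m$ for every $m$'' with the correct bookkeeping: $0 \in H^k_m$ for $M_{j-1} \le m \le M_j - 2$, and $a_j \in H^k_{M_j-1}$, proved together by backward induction starting from $a_k \in H^k_{M_k-1}$, and you must explicitly treat the $P_{m+1} = z^2 + a_j$ step using $a_j \in H^k_{M_j-1}$.
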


\begin{claimproof} (of Claim \ref{HmkConn})
For each $M_{k-1} \le m \le M_k -1$, the single critical value $a_k$ of $Q_{m, M_k-1}(z) = z^{2^{M_k - m -1}} + a_k$ coincides with the preimage $-\sqrt{-b_k}$ of $0$ under $P_{M_k}$. It follows using locally defined inverse branches that if we let $X$ be a path connected set at time $M_k -1$ which contains $a_k$, such as $H^k_{M_k -1}$ (see Definition \ref{GmkHmkDef}), then the preimage $Q_{m, M_k-1}^{-1}(X)$ will again be a path connected set (at time $m$) which contains $0$. 

If we now consider the path connected set $H^k_{M_k -1}$, it then follows from the above and Definition \ref{GmkHmkDef} that $H_{M_{k-1}}^k$ is a path connected set which, by this definition and \eqref{Invariance3}, must be a subset of $H_{M_{k-1}}^k = \overline{\mathrm D}(0,2)$ and which contains $0$ which is the image under $P_{M_{k-1}}$ of the critical value $a_{k-1}$ of $P_{M_{k-1}-1}$.  Taking a further preimage under $P_{M_{k-1}}(z) = z + b_{k-1}$
and recalling Definition \ref{GmkHmkDef} and the fact that $|b_{k-1}| > 6$, we obtain that $H_{M_{k-1}}^k$ is a path connected set which contains the critical value $a_{k-1}$ of $P_{M_{k-1}-1}$ and thus of $Q_{M_{k-2}, M_{k-1}-1}$.

It then follows by an easy induction starting from the path connected set $H^k_{M_k -1}$ at time $M_k - 1$ and working backwards to time $0$ that $H^k_m$ is path connected for each $0 \le m \le M_k-1$. On the other hand, the path connectedness of $H^k_m$ for $m > M_k -1$ follows immediately from the fact that, for $m > M_k -1$, $H^k_m = Q_{M_k -1, m}(H^k_{M_k -1})$ combined with the path connectedness of $H^k_{M_k -1}$ and with this the proof of the Claim is complete.
\end{claimproof}

To complete the definition of our sequence of polynomials, choose the sequence of positive integers $\{m_k\}_{k=1}^\infty$ to satisfy the two invariance conditions \eqref{Invariance1}, \eqref{Invariance2}  (we observe that these bounds on $m_k$ will now depend entirely on the absolute values of the quantities $b_k$ since $a_k$ is given in terms of $b_k$). In addition, we will need to choose $\{m_k\}_{k=1}^\infty$ to guarantee the existence (at the times $M_k$) of a backward invariant (closed) annulus which will contain the Julia sets $\J_{M_k}$ and also a forward invariant disc which will guarantee the existence of bounded Fatou components. 

\afterpage{\clearpage}

\begin{figure}
\vspace{.5cm}
\begin{tikzpicture}

\node at (-5.1,11.5) {\footnotesize Stage $M_{k_0+2}$};

\node at (1.6, 9.1) {$\scriptstyle P_{M_{k_0+2}}$};

{\color{darkred}
\draw[line width=0.1mm, fill=evenlesspalepink] (.8,11.5) circle (1.25);}

 {\color{black}
  \draw[line width=0.15mm] (.75,11.45) -- (.85,11.55);
  \draw[line width=0.15mm] (.75,11.55) -- (.855,11.45);
  
  \node at (1.02,10.85) {$\scriptstyle 0$};
  
  }
  
  \draw[line width=0.2mm, ->] (.8,8.4) -- (.8,9.9);

\node at (3.4, 11.4) {\color{darkred}$\scriptstyle H^{k_0+2}_{M_{k_0+2}} = \overline {\mathrm{D}}(0,2)$};

\node at (-4.8,8.0) {\footnotesize Stage $M_{k_0+2}-1$};

  \node at (3.2, 7.95) {\color{darkblue}$\scriptstyle  G^{k_0+2}_{M_{{k_0+2}}-1}$};

  {\color{darkblue}
  \draw[fill=evenlesspaleblue] (2,8) circle (.3);}
  
  \node at (-1.6, 7.95) {\color{darkred}$\scriptstyle  H^{k_0+2}_{M_{{k_0+2}}-1}$};

   {\color{darkred}
  \draw[fill=evenlesspalepink] (-.4,8) circle (.3);}
  
  {\color{black}
  \draw[line width=0.15mm] (-.45,7.95) -- (-.35,8.05);
  \draw[line width=0.15mm] (-.45,8.05) -- (-.35,7.95);
  
  \node at (-.6,6.9) {$\scriptscriptstyle a_{k_0+2} = - \sqrt{-b_{k_0+2}}$};
  
  \draw[line width=0.15mm, ->] (-.4,7.2) -- (-.4,7.9);
  }
  
     {\color{black}
  \draw[line width=0.15mm] (.75,7.95) -- (.85,8.05);
  \draw[line width=0.15mm] (.75,8.05) -- (.855,7.95);
  
  \node at (1.02,7.85) {$\scriptstyle 0$};
  
  }

\draw[line width=0.2mm, ->] (.8,4) -- (.8,7.6);

\node at (2.3, 5.7) {$\scriptstyle Q_{M_{k_0+1},M_{k_0+2} -1}$};

   \node at (-4.9,2.4) {\footnotesize Stage $M_{k_0+1}$};
  
  {\color{darkred}
\draw[line width=0.1mm, fill=lesspalepink] (.8,2.4) circle (1.25);}

{\color{darkred}
\draw[line width=0.1mm, fill=evenlesspalepink] (.8,2.4) circle (0.833);}

{\color{darkblue}
  \foreach \phi in {0,22.5,...,360}{
    \draw[line width=0.1mm,fill=evenlesspaleblue] ({.8+1.04*cos(\phi)},{2.4+1.04*sin(\phi)}) circle (.06);
  }}
  
     {\color{black}
  \draw[line width=0.15mm] (.75,2.35) -- (.85,2.45);
  \draw[line width=0.15mm] (.75,2.45) -- (.855,2.35);
  
  \node at (1.02,2.25) {$\scriptstyle 0$};
  
  }
  
  {\color{darkred}
  
    \node at (-1.2, 3.9) {\color{darkred}$\scriptstyle  H^{k_0+2}_{M_{k_0+1}}$};
  
    \draw[line width=0.15mm, ->] (-.8,3.55) -- (0.05,2.9);
  }

 {\color{darkblue}
  
    \node at (2.6, 3.9) {\color{darkblue}$\scriptstyle  G^{k_0+2}_{M_{k_0+1}}$};
  
    \draw[line width=0.15mm, ->] (2.12,3.75) -- (1.6,3.2);
  }

\draw[line width=0.2mm, ->] (.8,-.3) -- (.8,0.8);  

\node at (1.6, 0.1) {$\scriptstyle P_{M_{k_0+1}}$};
  
    {\color{darkred}
  \draw[line width=0.05mm,, fill=lesspalepink] (-.4,-.6) circle (.3);}
  
      {\color{darkred}
  \draw[line width=0.05mm,, fill=evenlesspalepink] (-.4,-.6) circle (.2);
  
  \node at (-1.7, -.6) {\color{darkred}$\scriptstyle  H^{k_0+1}_{M_{k_0+1}-1}$};
  
  \node at (-.7, 0.7) {\color{darkred}$\scriptstyle  H^{k_0+2}_{M_{k_0+1}-1}$};
  
    \draw[line width=0.15mm, ->] (-.7,0.4) -- (-.48,-.4);
  }
  
  {\color{black}
  \draw[line width=0.15mm] (-.45,-.65) -- (-.35,-.55);
  \draw[line width=0.15mm] (-.45,-.55) -- (-.35,-.65);
  
  \node at (-.6,-1.8) {$\scriptscriptstyle a_{k_0+1} = - \sqrt{-b_{k_0+1}}$};
  
  \draw[line width=0.15mm, ->] (-.4,-1.5) -- (-.4,-.7);
  }
  
   \node at (3.7, 2.4) {\color{darkred} \footnotesize$ H^{k_0+1}_{M_{k_0+1}} = \overline {\mathrm{D}}(0,2)$};
  
     \node at (-4.6,-.6) {\footnotesize Stage $M_{k_0+1} - 1$};
  
   {\color{darkblue}
  \draw[line width=0.05mm,, fill=lesspaleblue] (2,-.6) circle (.3);}
  
  {\color{darkblue}
  \foreach \phi in {0,22.5,...,360}{
    \draw[line width=0.05mm,fill=evenlesspaleblue] ({2+.24*cos(\phi)},{-.6+.25*sin(\phi)}) circle (.02);
  }}

   \node at (3.3, -.6) {\color{darkblue}$\scriptstyle  G^{k_0+1}_{M_{k_0+1}-1}$};

    {\color{darkblue}
  
    \node at (3.8, 0.7) {\color{darkblue}$\scriptstyle  G^{k_0+2}_{M_{k_0+1}-1}$};
  
    \draw[line width=0.15mm, ->] (3.2,0.6) -- (2.2,-.4);
  }
  
    {\color{black}
  \draw[line width=0.15mm] (.75,-.55) -- (.85,-.65);
  \draw[line width=0.15mm] (.75,-.65) -- (.85,-.55);
  
  \node at (1.03,-.702) {$\scriptstyle 0$};
 
  }   
  
  \draw[line width=0.2mm, ->] (.8,-4.6) -- (.8,-0.9);
  
  \node at (2.1, -2.8) {$\scriptstyle Q_{M_{k_0},M_{k_0+1} -1}$};
  
     \node at (3.5, -6.2) {\color{darkred} \footnotesize$ H^{k_0}_{M_{k_0}} = \overline {\mathrm{D}}(0,2)$};
  
    \node at (-4.9,-6.2) {\footnotesize Stage $M_{k_0}$};
  
    {\color{darkred}
\draw[line width=0.1mm, fill=palepink] (.8,-6.2) circle (1.25);}

{\color{darkred}
\draw[line width=0.1mm, fill=lesspalepink] (.8,-6.2) circle (0.8);}

{\color{darkred}
\draw[line width=0.1mm, fill=evenlesspalepink] (.8,-6.2) circle (0.72);}

   {\color{black}
  \draw[line width=0.15mm] (.75,-6.15) -- (.85,-6.25);
  \draw[line width=0.15mm] (.75,-6.25) -- (.855,-6.15);
  
  \node at (1.02,-6.35) {$\scriptstyle 0$};
  
  }

{\color{darkblue}
  \foreach \phi in {0,45,...,360}{
    \draw[line width=0.1mm,fill=lesspaleblue] ({.8+1.025*cos(\phi)},{-6.2+1.025*sin(\phi)}) circle (.12);
  }}

{\color{darkblue}
\foreach \psi in {0, 45,..., 360}{
  \foreach \phi in {0,22.5,...,360}{
    \draw[line width=0.02mm,fill=evenlesspaleblue] ({.8+1.025*cos(\psi) + .1*cos(\phi)},{-6.2+1.025*sin(\psi) + .1*sin(\phi)}) circle (.01);
  }}}

   {\color{darkred}
  
    \node at (-2.2, -5.05) {\color{darkred}$\scriptstyle  H^{k_0+1}_{M_{k_0}}, \,\,H^{k_0+2}_{M_{k_0}}$};
  
    \draw[line width=0.15mm, ->] (-1.25,-5.35) -- (0.0,-5.87);
  }

    {\color{darkblue}
  
    \node at (3.4, -4.2) {\color{darkblue}$\scriptstyle  G^{k_0+1}_{M_{k_0}}, \,\,G^{k_0+2}_{M_{k_0}}$};
  
    \draw[line width=0.15mm, ->] (2.6,-4.4) -- (1.63,-5.37);
  }


\end{tikzpicture}   
\caption{The setup for Theorem \ref{MainTh1}} \label{MainTh1Picture}
\end{figure}
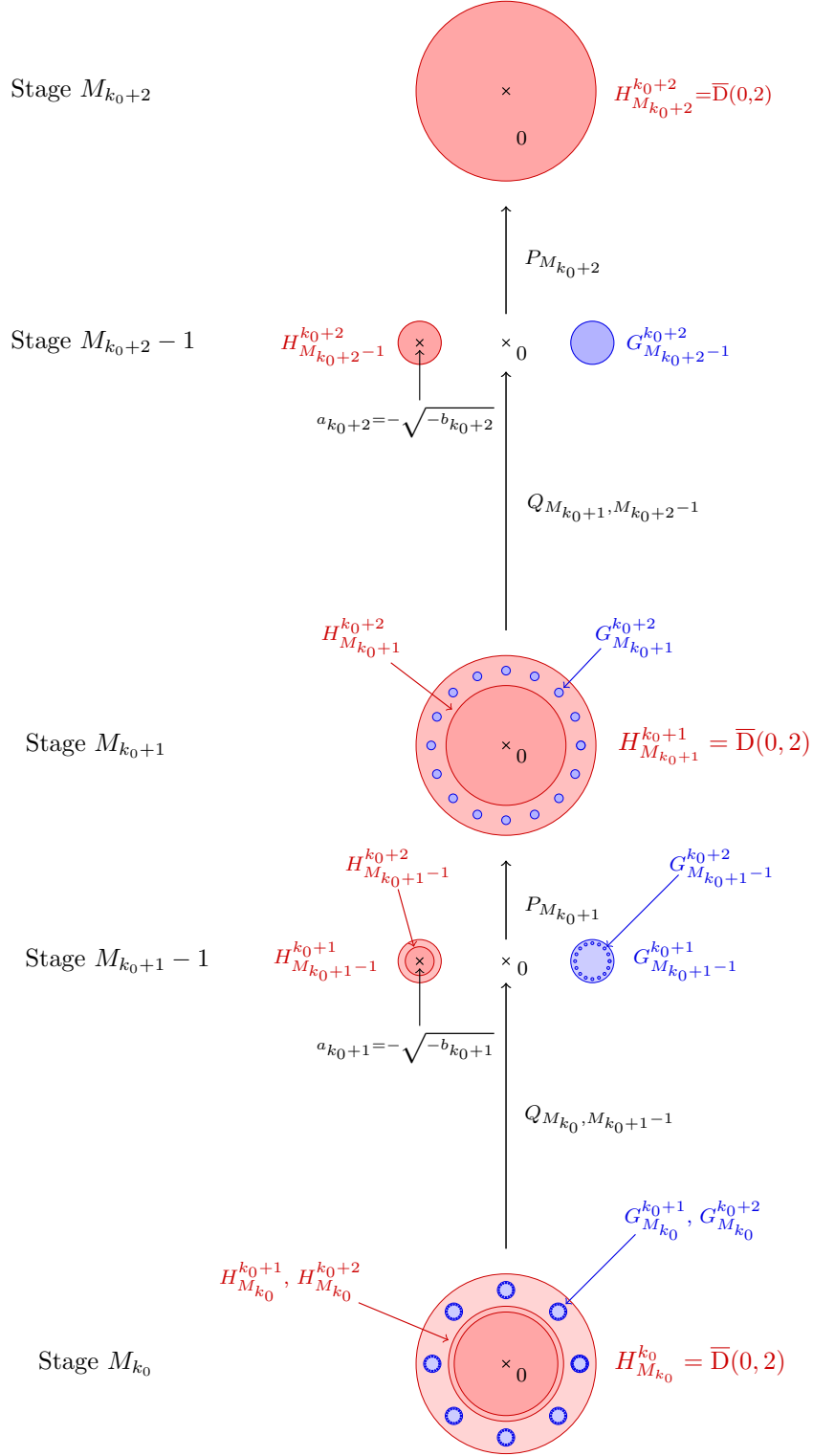

\newpage

Together this allows us to be sure we have a bounded Fatou component whose closure will be one of the uniformly perfect pieces of the iterated filled Julia set in part \emph{5.} of the statement.

\begin{claim}
\label{InvariantAnnuliDiscs}
If in addition to satisfying \eqref{Invariance1}, \eqref{Invariance2} the integers $m_k$, $k \ge 1$ also satisfy
\begin{equation}
\label{mkbound1}
m_k \ge \frac{\log \! \left (\frac{\log{8\sqrt{-b_k}}}{\log2} \right)}{\log 2},\vspace{.2cm}
\end{equation}
then we can ensure that 

\begin{enumerate}

\item 
$$Q_{M_{k-1}, M_k}^{-1}(\overline {\mathrm A}(0, \tfrac{1}{2}, 2)) \subset \overline {\mathrm A}(0, \tfrac{1}{2}, 2),$$

\item
$$Q_{M_{k-1}, M_k}^{-1}({\mathrm D}(0, \tfrac{1}{2})) \supset {\mathrm D}(0, \tfrac{1}{2}),$$

\item
$$H^k_{M_{k-1}} = Q_{M_{k-1}, M_k-1}^{-1}(H^k_{M_k-1}) \supset {\mathrm D}(0, \tfrac{1}{2}).$$
\end{enumerate}
\end{claim}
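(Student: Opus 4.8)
The plan is to reduce all three parts to one elementary observation: the block $Q_{M_{k-1},M_k-1}$ collapses the disc ${\mathrm D}(0,\tfrac{1}{2})$ onto a microscopic disc centred at $a_k=-\sqrt{-b_k}$. By the definition \eqref{PmkDef} of the sequence, the polynomials with indices $M_{k-1}+1,\dots,M_k-2$ are all $z\mapsto z^2$ while $P_{M_k-1}=z^2+a_k$, so $Q_{M_{k-1},M_k-1}(z)=z^{2^{m_k}}+a_k$, and therefore (using $a_k=-\sqrt{-b_k}$)
\[Q_{M_{k-1},M_k}(z)=(z^{2^{m_k}}+a_k)^2+b_k=z^{2^{m_k}}\bigl(z^{2^{m_k}}-2\sqrt{-b_k}\bigr).\]
In particular $Q_{M_{k-1},M_k-1}({\mathrm D}(0,\tfrac{1}{2}))={\mathrm D}\bigl(a_k,\,2^{-2^{m_k}}\bigr)$. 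I would then unwind the hypothesis \eqref{mkbound1}: rewriting it as $m_k\ge\log_2\log_2(8\sqrt{-b_k})$ and exponentiating twice, it says exactly $2^{2^{m_k}}\ge 8\sqrt{-b_k}$, i.e. $2^{-2^{m_k}}\le \tfrac{1}{8\sqrt{-b_k}}$. This inequality, together with $-b_k>6$, is the only quantitative input; the rest is comparison of radii.

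For part \emph{3}, I would first note from Definition \ref{GmkHmkDef} that no index $m$ with $M_{k-1}\le m\le M_k-2$ is of the form $M_j-1$ for some $j<k$ (all such restricted times lie at or below $M_{k-1}$), so $H^k_{M_{k-1}}$ really is the full preimage $Q_{M_{k-1},M_k-1}^{-1}(H^k_{M_k-1})$, as the statement asserts. By Lemma \ref{rksk} we have ${\mathrm D}(a_k,s_k)\subset H^k_{M_k-1}$, and by Lemma \ref{rkskbounds} $s_k\ge\tfrac{1}{2\sqrt{-b_k}}>\tfrac{1}{8\sqrt{-b_k}}\ge 2^{-2^{m_k}}$, so $Q_{M_{k-1},M_k-1}({\mathrm D}(0,\tfrac{1}{2}))={\mathrm D}(a_k,2^{-2^{m_k}})\subset{\mathrm D}(a_k,s_k)\subset H^k_{M_k-1}$; taking preimages gives ${\mathrm D}(0,\tfrac{1}{2})\subset H^k_{M_{k-1}}$. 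For part \emph{2}, I would estimate $Q_{M_{k-1},M_k}$ directly on ${\mathrm D}(0,\tfrac{1}{2})$ using the factored form above: for $|z|<\tfrac{1}{2}$,
\[|Q_{M_{k-1},M_k}(z)|\le 2^{-2^{m_k}}\bigl(2^{-2^{m_k}}+2\sqrt{-b_k}\bigr)\le\tfrac{1}{64(-b_k)}+\tfrac{1}{4}<\tfrac{1}{2},\]
so ${\mathrm D}(0,\tfrac{1}{2})\subset Q_{M_{k-1},M_k}^{-1}({\mathrm D}(0,\tfrac{1}{2}))$. (Equivalently, one could push the tiny disc ${\mathrm D}(a_k,2^{-2^{m_k}})$ through $P_{M_k}$ and invoke Lemma \ref{ForwardImage} with $R=\tfrac{1}{2}$, noting $\sqrt{-b_k+\tfrac{1}{2}}-\sqrt{-b_k}>\tfrac{1}{8\sqrt{-b_k}}\ge 2^{-2^{m_k}}$.)

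Part \emph{1} is then a purely set-theoretic consequence. By \eqref{Invariance3}, $Q_{M_{k-1},M_k}^{-1}(\overline{\mathrm D}(0,2))\subset\overline{\mathrm D}(0,2)$, hence $Q_{M_{k-1},M_k}^{-1}(\overline{\mathrm A}(0,\tfrac{1}{2},2))\subset\overline{\mathrm D}(0,2)$; and by part \emph{2}, every $z$ with $|z|<\tfrac{1}{2}$ satisfies $|Q_{M_{k-1},M_k}(z)|<\tfrac{1}{2}$, so such a $z$ cannot lie in $Q_{M_{k-1},M_k}^{-1}(\overline{\mathrm A}(0,\tfrac{1}{2},2))$. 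Therefore $Q_{M_{k-1},M_k}^{-1}(\overline{\mathrm A}(0,\tfrac{1}{2},2))\subset\overline{\mathrm D}(0,2)\setminus{\mathrm D}(0,\tfrac{1}{2})=\overline{\mathrm A}(0,\tfrac{1}{2},2)$.

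There is no genuine obstacle here; the whole claim is bookkeeping once one sees that $Q_{M_{k-1},M_k-1}$ sends ${\mathrm D}(0,\tfrac{1}{2})$ to a disc of radius $2^{-2^{m_k}}$ about $a_k$. The only points needing a little care are verifying the explicit form $Q_{M_{k-1},M_k-1}=z^{2^{m_k}}+a_k$ together with the fact that $H^k_{M_{k-1}}$ carries no branch restriction from this block, and correctly converting \eqref{mkbound1} into $2^{2^{m_k}}\ge 8\sqrt{-b_k}$. Since $-b_k>6$, all the radius comparisons ($2^{-2^{m_k}}\le\tfrac{1}{8\sqrt{-b_k}}$ versus $s_k$, versus $\sqrt{-b_k+\tfrac{1}{2}}-\sqrt{-b_k}$, and versus $\tfrac{1}{2}$ after passing through $P_{M_k}$) hold with a comfortable margin.
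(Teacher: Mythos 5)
Your proposal is correct, and while the quantitative core is the same (converting \eqref{mkbound1} to $2^{2^{m_k}}\ge 8\sqrt{-b_k}$, using $s_k\ge\tfrac{1}{2\sqrt{-b_k}}$ and $-b_k>6$), the route you take differs from the paper's in two welcome ways. First, for part~\emph{2} the paper works purely by pulling back: it invokes Lemma~\ref{ForwardImage} to show $P_{M_k}^{-1}({\mathrm D}(0,\tfrac12))$ contains discs of radius $\tfrac{1}{8\sqrt{-b_k}}$ about $\pm\sqrt{-b_k}$, and then takes $1/2^{m_k}$-th powers of radii under $Q_{M_{k-1},M_k-1}^{-1}$; you instead compute the factored form $Q_{M_{k-1},M_k}(z)=z^{2^{m_k}}(z^{2^{m_k}}-2\sqrt{-b_k})$ and estimate directly, which is a genuinely different (and somewhat slicker) algebraic identity that the paper never writes down. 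Second, and more significantly, the paper proves part~\emph{1} by an independent two-sided annulus pullback through $P_{M_k}^{-1}$ and then $Q_{M_{k-1},M_k-1}^{-1}$, whereas you derive part~\emph{1} as a formal consequence of part~\emph{2} together with \eqref{Invariance3}, via $Q^{-1}(\overline{\mathrm A}(0,\tfrac12,2))=Q^{-1}(\overline{\mathrm D}(0,2))\setminus Q^{-1}({\mathrm D}(0,\tfrac12))\subset\overline{\mathrm D}(0,2)\setminus{\mathrm D}(0,\tfrac12)$. That set-theoretic reduction avoids the paper's most computational display entirely and is a real simplification. For part~\emph{3} the two arguments are essentially the same radius comparison seen from opposite directions (you push ${\mathrm D}(0,\tfrac12)$ forward to ${\mathrm D}(a_k,2^{-2^{m_k}})$ and compare with $s_k$; the paper pulls ${\mathrm D}(a_k,s_k)$ back and compares with $\tfrac12$), and your side-remark that no branch restriction from Definition~\ref{GmkHmkDef} applies on $M_{k-1}\le m\le M_k-2$ is a correct and useful clarification of why $H^k_{M_{k-1}}$ is the full preimage. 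All steps check out.
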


\begin{claimproof} (of Claim \ref{InvariantAnnuliDiscs})
We begin by defining $t_k := \sqrt{-b_k + \tfrac{1}{2}} - \sqrt{-b_k}$ for each $k \ge 1$. By Lemma \ref{ForwardImage} $P_{M_k}({\mathrm D}(\pm \sqrt{-b_k}, t_k)) \subset {\mathrm D}(0, \tfrac{1}{2})$ so that 
$$P_{M_k}^{-1} ({\mathrm D}(0, \tfrac{1}{2})) \supset \left ( {\mathrm D}(-\sqrt{-b_k}, t_k) \cup {\mathrm D}(\sqrt{-b_k}, t_k) \right ).$$

Since $b_k < -6$, using either the mean value theorem for the function $f(x) = \sqrt x$ on the interval $[\sqrt{-b_k}, \sqrt{-b_k + \tfrac{1}{2}}]$ or just simple algebra, one obtains that $t_k > \tfrac{1}{8\sqrt{-b_k}}$, so that from above we also have 
\begin{equation}
\label{InverseDisc1}
P_{M_k}^{-1}({\mathrm D}(0, \tfrac{1}{2})) \supset ({\mathrm D}(-\sqrt{-b_k}, \tfrac{1}{8\sqrt{-b_k}}) \cup {\mathrm D}(\sqrt{-b_k}, \tfrac{1}{8\sqrt{-b_k}}))\vspace{.1cm}
\end{equation}
whence by Lemma \ref{rksk}
\vspace{.1cm}
\begin{equation}
\label{InverseAnnulus1}
P_{M_k}^{-1}(\overline {\mathrm A}(0, \tfrac{1}{2}, 2)) \subset \left ( \overline{\mathrm A}(-\sqrt{-b_k}, \tfrac{1}{8\sqrt{-b_k}},r_k) \cup \overline{\mathrm A}(\sqrt{-b_k}, \tfrac{1}{8\sqrt{-b_k}}, r_k) \right ). 
\end{equation}

If $m_k$ is bounded below as in \eqref{mkbound1}, then, again since $b_k < -6$,  we have 

\begin{equation}
\label{OuterRadiusBound1}
2^{2^{m_k}} \ge 8\sqrt{-b_k} > 2
\end{equation}

whence 
\begin{equation}
\label{InnerRadiusBound1}
\left ( \frac{1}{8\sqrt{-b_k}}\right)^{1/2^{m_k}} \ge \frac{1}{2}.
\end{equation}

Recalling from our construction using \eqref{PmkDef} of the sequence $\Pm$ that $Q_{M_{k-1},M_k -1}(z) = z^{2^{m_k}} + a_k = z^{2^{m_k}} - \sqrt{-b_k}$ as well as our choice of $a_k = - \sqrt{-b_k}$ from the start of the proof (of Theorem \ref{MainTh1}), it then follows using  \eqref{InverseAnnulus1}, \eqref{OuterRadiusBound1}, \eqref{InnerRadiusBound1} that (using $r_k < \tfrac{1}{2}$ from Lemma \ref{rkskbounds} so that $r_k <  \tfrac{1}{2} < +  \sqrt 6 < \sqrt{-b_k}$ and thus $2\sqrt{-b_k} +  r_k <  3\sqrt{-b_k} < 8 \sqrt{-b_k}$)
\begin{eqnarray*}
\hspace{0cm}Q_{M_{k-1}, M_k}^{-1}(\overline {\mathrm A}(0, \tfrac{1}{2}, 2))&\\ & \hspace{-2cm}\subset Q_{M_{k-1}, M_k-1}^{-1}\left (\overline{\mathrm A}(-\sqrt{-b_k}, \tfrac{1}{8\sqrt{-b_k}},r_k) \cup \overline{\mathrm A}(\sqrt{-b_k}, \tfrac{1}{8\sqrt{-b_k}}, r_k)\right )&\\
&\hspace{-5cm} \subset Q_{M_{k-1}, M_k-1}^{-1}\left (\overline{\mathrm A}(-\sqrt{-b_k}, \tfrac{1}{8\sqrt{-b_k}}, 8\sqrt{-b_k})\right )&\\ 
&\hspace{-10.55cm}  \subset \overline {\mathrm A}(0, \tfrac{1}{2}, 2)&
\end{eqnarray*}
from which we obtain \emph{1.} On the other hand, using \eqref{InverseDisc1}, \eqref{InnerRadiusBound1}
\begin{eqnarray*}
Q_{M_{k-1}, M_k}^{-1}({\mathrm D}(0, \tfrac{1}{2}))\!\!\! &=& 
Q_{M_{k-1}, M_k-1}^{-1}(P_{M_k}^{-1}({\mathrm D}(0, \tfrac{1}{2}))\\ 
&\supset&   Q_{M_{k-1}, M_k-1}^{-1}({\mathrm D}(-\sqrt{-b_k}, \tfrac{1}{8\sqrt{-b_k}}) \cup {\mathrm D}(\sqrt{-b_k}, \tfrac{1}{8\sqrt{-b_k}}))\\
&\supset&  Q_{M_{k-1}, M_k-1}^{-1}({\mathrm D}(-\sqrt{-b_k}, \tfrac{1}{8\sqrt{-b_k}}))\\
&\supset&  {\mathrm D}(0, \tfrac{1}{2})
\end{eqnarray*}
from which we obtain \emph{2.} 

Finally, using Definition  \ref{GmkHmkDef} together with Lemmas \ref{rksk}, \ref{rkskbounds} and \eqref{InnerRadiusBound1}
\begin{eqnarray*} 
Q_{M_{k-1}, M_k-1}^{-1}(H^k_{M_k-1}) \!\!\!&\supset& Q_{M_{k-1}, M_k-1}^{-1}({\mathrm D}(-\sqrt{-b_k},s_k))\\
&\supset& Q_{M_{k-1}, M_k-1}^{-1}({\mathrm D}(-\sqrt{-b_k}, \tfrac{1}{2\sqrt{-b_k}}))\\
&\supset& Q_{M_{k-1}, M_k-1}^{-1}({\mathrm D}(-\sqrt{-b_k}, \tfrac{1}{8\sqrt{-b_k}}))\\
&\supset& {\mathrm D}(0, \tfrac{1}{2})
\end{eqnarray*}
while $H^k_{M_{k-1}} = Q_{M_{k-1}, M_k-1}^{-1}(H^k_{M_k-1})$ by definition of the set $H^k_{M_{k-1}}$ (Definition  \ref{GmkHmkDef}). With this we have
\emph{3.} and the proof of the claim is complete. \end{claimproof}

From now on we assume the integers $m_k$ satisfy the lower bound \eqref{mkbound1} above (we shall be replacing this with a stricter lower bound \eqref{mkbound2} later on). We will require an upper bound on the derivatives of the inverse branches of the maps $Q_{M_{k-1},M_k}$ on an open annulus which contains $\overline {\mathrm A}(0, \tfrac{1}{2}, 2)$. Our first step in this direction is the following:

\begin{claim}
\label{InverseBranchLowerBound1}
For $z \in P_{M_k}^{-1}({\mathrm A}(0,\tfrac{1}{3}, 3))$, we have 

$$|z - a_k| > \frac{1}{12\sqrt{-b_k}}.$$
\end{claim}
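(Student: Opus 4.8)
The plan is to work directly with the formula $P_{M_k}(z) = z^2 + b_k$ and its inverse branches $z = \pm\sqrt{w - b_k}$, and to bound $|z - a_k| = |z + \sqrt{-b_k}|$ from below by arguing along the straight segment from $0$ to $w$, exactly as in the proof of Lemma \ref{rksk} but now tracking the \emph{minimum} rather than the maximum. Fix $w = P_{M_k}(z) \in {\mathrm A}(0,\tfrac13,3)$, so $\tfrac13 < |w| < 3$. The key point is that $a_k = -\sqrt{-b_k}$ is precisely the preimage of $0$ under $P_{M_k}$ lying on the branch containing $H^k_{M_k-1}$; hence for $z$ on that same branch we have, writing $w = \rho e^{i\phi}$ with $\rho > \tfrac13$,
\[
|z - a_k| \;=\; \left\vert \sqrt{w - b_k} - \sqrt{-b_k}\,\right\vert
\;=\; \left\vert \int_0^{w} \frac{\dz}{2\sqrt{z - b_k}} \right\vert
\;=\; \left\vert \int_0^{\rho} \frac{e^{i\phi}\,\dt}{2\sqrt{t e^{i\phi} - b_k}} \right\vert.
\]
For $z$ on the branch passing through $\sqrt{-b_k}$ instead, the estimate is identical since $|z - a_k| = |z - \sqrt{-b_k}| + $ (a correction), but in fact the relevant lower bound must come from $|z - (\pm\sqrt{-b_k})|$ being controlled on both branches — so I will actually bound $\min(|z - \sqrt{-b_k}|, |z+\sqrt{-b_k}|)$ and note that on whichever branch $z$ lies, the distance to the \emph{opposite} root is $\approx 2\sqrt{-b_k}$, which is large, so the binding constraint is always the distance to the nearby root, for which the integral formula above applies verbatim.

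The main step is then the lower bound on that integral. Since the integrand has constant sign obstruction only through the phase of $\sqrt{t e^{i\phi} - b_k}$, and since $|b_k| > 6$ while $t \le 3$, the quantity $t e^{i\phi} - b_k$ stays in a bounded sector around the positive real axis (its real part is at least $-b_k - 3 > 3 > 0$ and its imaginary part is at most $3$ in absolute value), so $\sqrt{t e^{i\phi} - b_k}$ lies in a narrow sector of half-angle well below $\pi/4$; consequently no cancellation of more than a bounded factor occurs in the integral, and
\[
\left\vert \int_0^{\rho} \frac{e^{i\phi}\,\dt}{2\sqrt{t e^{i\phi} - b_k}} \right\vert
\;\ge\; c \int_0^{\rho} \frac{\dt}{2\sqrt{|t e^{i\phi} - b_k|}}
\;\ge\; c\,\frac{\rho}{2\sqrt{-b_k + 3}}
\;\ge\; \frac{c}{3}\cdot\frac{1}{2\sqrt{-b_k+3}}
\]
for an absolute constant $c$ (coming from the cosine of the sector half-angle, say $c = \cos(\pi/6) = \tfrac{\sqrt3}{2}$ after checking $|{\rm Im}| / {\rm Re} \le 3/3 = 1$ is a bit too crude — I will tighten using $-b_k > 6$ so that ${\rm Re} \ge 3$ and $|{\rm Im}|\le 3$ gives half-angle $\le \pi/4$, then improve as needed). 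Since $-b_k + 3 < 2(-b_k)$ for $-b_k > 6$ (indeed $-b_k > 3$ suffices), we get $\sqrt{-b_k+3} < \sqrt2\,\sqrt{-b_k}$, and collecting the constants yields a bound of the shape $\tfrac{1}{C\sqrt{-b_k}}$; a direct numerical check that the constant $C$ can be taken to be $12$ completes the argument.

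The step I expect to be the real obstacle is making the lower bound on the oscillatory integral clean enough that the explicit constant $12$ falls out without a messy case analysis: one has to simultaneously handle the phase cancellation (controlled by $|b_k|>6$ keeping $te^{i\phi}-b_k$ away from the negative reals) and the fact that $\rho$ ranges over all of $(\tfrac13,3)$, where the small values $\rho \to \tfrac13$ are the worst case. A cleaner alternative, which I would actually prefer to present, is to avoid the integral entirely: since $|z-a_k|$ small forces $z$ near $-\sqrt{-b_k}$, Lemma \ref{ForwardImage} applied with the radius $r = |z-a_k|$ gives $P_{M_k}(z) \in \overline{\mathrm D}(0, 2\sqrt{-b_k}\,|z-a_k| + |z-a_k|^2)$; but $P_{M_k}(z) = w$ has $|w| > \tfrac13$, so $2\sqrt{-b_k}\,|z-a_k| + |z-a_k|^2 > \tfrac13$, and since the linear term dominates once $|z-a_k|$ is at most, say, $1$ (which we may assume, else the bound is trivial as $12\sqrt{-b_k} > 12\sqrt6 > 1$), we get $3\sqrt{-b_k}\,|z-a_k| > \tfrac13$, i.e. $|z - a_k| > \tfrac{1}{9\sqrt{-b_k}} > \tfrac{1}{12\sqrt{-b_k}}$. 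This routes everything through the already-proved Lemma \ref{ForwardImage} and makes the constant entirely transparent; the only care needed is the reduction to $|z-a_k| \le 1$ and the observation that on the other inverse branch $|z - a_k| \approx 2\sqrt{-b_k} \gg \tfrac{1}{12\sqrt{-b_k}}$ trivially.
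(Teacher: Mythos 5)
Your preferred second argument is correct and is essentially the paper's proof: both route through Lemma \ref{ForwardImage} at the critical point $a_k=-\sqrt{-b_k}$ and then use $-b_k>6$ to extract the explicit constant $\tfrac{1}{12\sqrt{-b_k}}$. The paper sets $u_k=\sqrt{-b_k+\tfrac13}-\sqrt{-b_k}$, observes that the preimage of ${\mathrm A}(0,\tfrac13,3)$ must avoid $\overline{\mathrm D}(\pm\sqrt{-b_k},u_k)$, and bounds $u_k$ below by the mean value theorem, whereas you invert the inequality $2\sqrt{-b_k}\,r+r^2>\tfrac13$ by linearizing for $r\le1$; the two are just algebraic rearrangements of the same estimate.
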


\begin{claimproof}  (of Claim \ref{InverseBranchLowerBound1})
By Lemma \ref{ForwardImage} (and the continuity of $P_{M_k}$), if we define $u_k := \sqrt{-b_k + \tfrac{1}{3}} - \sqrt{-b_k}$, then $P_{M_k}(\overline {\mathrm D}(\pm \sqrt{-b_k}, u_k)) \subset \overline {\mathrm D}(0, \tfrac{1}{3})$ whence we have 
$$P_{M_k}^{-1}({\mathrm A}(0,\tfrac{1}{3}, 3)) \cap (\overline {\mathrm D}(-\sqrt{-b_k}, u_k) \cup \overline {\mathrm D}(\sqrt{-b_k}, u_k)) = \emptyset$$
so that, if $z  \in P_{M_k}^{-1}({\mathrm A}(0,\tfrac{1}{3}, 3))$, then $|z - a_k| > u_k$ (recall that here we have $a_k = -\sqrt{-b_k}$).

A very similar argument to that for the lower bound for $s_k$ in Lemma \ref{rkskbounds} using the mean value theorem and the fact $|b_k| > 6$ then shows that $u_k > \tfrac{1}{12\sqrt{-b_k}}$ from which the claim then follows. 
\end{claimproof}

The above claim is the key to proving the desired estimates on the derivatives of the inverse branches of the maps $Q_{M_{k-1},M_k}$ on
${\mathrm A}(0,\tfrac{1}{3}, 3)$. For each $k \ge 1$ each of the polynomials $Q_{m,M_k}$, $M_{k-1} \le m \le M_k -2$ has two critical values: $0$ which is the image of the single critical value $a_k$ of $Q_{m, M_k -1}$ under $P_{M_k}$ and the critical value $b_k$ of $P_{M_k}$. Since $|b_k| > 6$ both these critical values avoid the annulus ${\mathrm A}(0,\tfrac{1}{3}, 3)$. 

Again since $|b_k| > 6$, the preimage of this annulus under $P_{M_k}$ consists of two annuli (of the same modulus as the original), and exactly one of these contains the critical value $a_k = -\sqrt{-b_k}$ of $Q_{M_{k-1}, M_k -1}$ in its bounded component. Away from this critical value, the preimage of a point $z$ under $Q_{M_{k-1}, M_k -1}$ consists of $2^{m_k}$ points which are rotations of each other by an angle of $\tfrac{2\pi}{2^{m_k}}$ and one runs through all of these preimages by analytically continuing 
$Q_{M_{k-1}, M_k -1}^{-1}$ by running $2^{m_k}$ times around a simple closed curve enclosing $a_k$ which starts and ends at $z$.

Putting this together, we see that on ${\mathrm A}(0,\tfrac{1}{3}, 3)$ we have one analytic continuation of $Q_{M_{k-1},M_k}^{-1}$ obtained by going around this annulus $2^{m_k}$ times which corresponds to choosing the component of $P_{M_k}^{-1}({\mathrm A}(0,\tfrac{1}{3}, 3))$ which contains $0$ in its bounded complementary component. In addition, we have a further $2^{m_k}$ components of the preimage which correspond to branches of $Q_{M_{k-1},M_k}^{-1}$ where we choose a component of $P_{M_k}^{-1}({\mathrm A}(0,\tfrac{1}{3}, 3))$ which does not contain $0$ in its bounded complementary component and where we have a single branch of $Q_{M_{k-1},M_k}^{-1}$ which is defined and univalent on the disc ${\mathrm D}(0, 3)$. Thus, for any locally defined branch $f$ of $Q_{M_{k-1},M_k}^{-1}$ on ${\mathrm A}(0,\tfrac{1}{3}, 3)$, $f$ can either be analytically continued by going around this annulus $2^{m_k}$ times or can be extended to be globally defined and univalent on the disc ${\mathrm D}(0, 3)$ which contains this annulus. 

\newpage
\begin{claim}
\label{Contracting1}
For each $k \ge 1$, we have the following: 

\begin{enumerate}
\item For each $M_{k-1} \le m \le M_k - 1$ and each  locally defined branch $f$ of $Q_{m, M_k}^{-1}$ defined on ${\mathrm A}(0,\tfrac{1}{3}, 3)$, we have 

$$|f'(z)| < 6\sqrt{-b_k}, \quad \mbox{for each} \:\; z \in {\mathrm A}(0,\tfrac{1}{3}, 3),$$

\vspace{.2cm}
\item If in addition to satisfying \eqref{Invariance1}, \eqref{Invariance2}, and \eqref{mkbound1}, we also require that 

\begin{equation}
\label{mkbound2}
m_k \ge \tfrac{\log(12\sqrt{-b_k})}{\log 2},\\
\end{equation}

\vspace{.2cm}
for each locally defined branch $f$ of $Q_{M_{k-1}, M_k}^{-1}$ defined on ${\mathrm A}(0,\tfrac{1}{3}, 3)$, we have 

$$|f'(z)| < \frac{1}{2}, \quad \mbox{for each} \:\; z \in {\mathrm A}(0,\tfrac{1}{3}, 3).\hspace{.4cm}$$

\end{enumerate}
\end{claim}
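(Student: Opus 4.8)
The plan is to estimate the derivative of each inverse branch $f$ of $Q_{m,M_k}^{-1}$ by factoring $f$ through the intermediate time $M_k-1$ and estimating the two pieces separately: the branch of $P_{M_k}^{-1}$ from time $M_k-1$ back, and the branch of $Q_{m,M_k-1}^{-1} = (z^{2^{M_k-m-1}} + a_k)^{-1}$ from time $M_k-1$ back to time $m$. For part \emph{1.}, write $f = g \circ h$ where $h$ is the appropriate branch of $P_{M_k}^{-1}$ mapping ${\mathrm A}(0,\tfrac13,3)$ into $P_{M_k}^{-1}({\mathrm A}(0,\tfrac13,3))$ and $g$ is the relevant branch of $Q_{m,M_k-1}^{-1}$. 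Since $P_{M_k}(w) = w^2 + b_k$, we have $|h'(z)| = \tfrac{1}{2|h(z)|} = \tfrac{1}{2|\sqrt{z-b_k}|} \le \tfrac{1}{2\sqrt{-b_k - 3}}$ on ${\mathrm A}(0,\tfrac13,3)$ (using $|b_k|>6$). For $g$, note that $Q_{m,M_k-1}(w) = w^{2^{M_k-m-1}} + a_k$ so that on the relevant preimage component its derivative at a point $\zeta$ is $2^{M_k-m-1}\,|\zeta|^{2^{M_k-m-1}-1}$, and the key lower bound is Claim \ref{InverseBranchLowerBound1}: on $h({\mathrm A}(0,\tfrac13,3)) \subset P_{M_k}^{-1}({\mathrm A}(0,\tfrac13,3))$ we have $|\zeta - a_k| > \tfrac{1}{12\sqrt{-b_k}}$, but here I actually want a lower bound on $|Q_{m,M_k-1}(\text{point at time }m)|$, i.e. a lower bound on the point $g$ lands on; the cleanest route is to bound $|g'|$ using the fact that the branch of the $2^{M_k-m-1}$-st root map has derivative $\tfrac{1}{2^{M_k-m-1}} |w - a_k|^{1/2^{M_k-m-1} - 1}$ evaluated at $w \in {\mathrm A}(0,\tfrac13,3)$ shifted so that $|w-a_k| < \tfrac13 + \sqrt{-b_k} < 2\sqrt{-b_k}$, giving $|g'(w)| \le |w-a_k|^{1/2^{M_k-m-1}-1} < (2\sqrt{-b_k})^{-1+o(1)} < \tfrac{1}{\sqrt{-b_k}}$ roughly, so that $|f'| = |g'||h'| \lesssim \tfrac{1}{\sqrt{-b_k}} \cdot \tfrac{1}{2\sqrt{-b_k}}$ — which is \emph{far} smaller than $6\sqrt{-b_k}$. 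In fact I expect part \emph{1.} to hold comfortably; the generous bound $6\sqrt{-b_k}$ is there so that one can freely compose it with the (at most $m_k$ many) trivial-branch steps of the form $z \mapsto z^{1/2}$ that occur at the non-special times, each of which has derivative of size at most $|z|^{-1/2}/2$ and hence bounded on ${\mathrm A}(0,\tfrac13,3)$ by a universal constant — so the precise constant does not matter, only that it is polynomial in $\sqrt{-b_k}$.

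For part \emph{2.}, I specialize to $m = M_{k-1}$, so $Q_{M_{k-1},M_k-1}(z) = z^{2^{m_k}} + a_k$, and the single inverse branch $g$ of this from time $M_k-1$ back to time $M_{k-1}$ on the relevant component has derivative $g'(w) = \tfrac{1}{2^{m_k}} (w - a_k)^{\frac{1}{2^{m_k}} - 1}$. Combining with $|h'(z)| \le \tfrac{1}{2\sqrt{-b_k-3}}$ from part \emph{1.}, we get
\begin{equation*}
|f'(z)| = |g'(h(z))|\,|h'(z)| \le \frac{1}{2^{m_k}} \,|h(z) - a_k|^{\frac{1}{2^{m_k}} - 1}\cdot \frac{1}{2\sqrt{-b_k-3}}.
\end{equation*}
Now Claim \ref{InverseBranchLowerBound1} gives $|h(z) - a_k| > \tfrac{1}{12\sqrt{-b_k}}$, and an easy upper bound gives $|h(z) - a_k| < \sqrt{-b_k} + \sqrt{3} < 2\sqrt{-b_k}$; since the exponent $\tfrac{1}{2^{m_k}} - 1$ is negative, $|h(z)-a_k|^{\frac{1}{2^{m_k}}-1} \le (12\sqrt{-b_k})^{\,1 - \frac{1}{2^{m_k}}} \le 12\sqrt{-b_k}$. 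Plugging in, $|f'(z)| \le \tfrac{1}{2^{m_k}} \cdot 12\sqrt{-b_k}\cdot \tfrac{1}{2\sqrt{-b_k-3}} \le \tfrac{C}{2^{m_k}}$ for a universal constant $C$ (absorbing $\sqrt{-b_k}/\sqrt{-b_k-3} \le \sqrt{2}$), and the bound \eqref{mkbound2}, $m_k \ge \tfrac{\log(12\sqrt{-b_k})}{\log 2}$, i.e. $2^{m_k} \ge 12\sqrt{-b_k}$, forces $|f'(z)| < \tfrac12$ after checking the constants. (If the constant $12\sqrt{-b_k}$ alone does not quite beat the $\tfrac12$ because of the leftover $\tfrac{1}{2\sqrt{-b_k-3}}$, note that factor is itself $< \tfrac{1}{2\sqrt{3}} < \tfrac13$, so one has room to spare.)

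The main obstacle I anticipate is bookkeeping the branch structure rather than any hard estimate: one must use the description given just before the claim — that a branch of $Q_{M_{k-1},M_k}^{-1}$ on the annulus either continues $2^{m_k}$ times around the annulus (the component containing $0$ in its bounded complement) or extends univalently to ${\mathrm D}(0,3)$ — to be sure that in both cases $h(z) = $ a point in $P_{M_k}^{-1}({\mathrm A}(0,\tfrac13,3))$ so that Claim \ref{InverseBranchLowerBound1} applies, and that the $z^{1/2^{m_k}}$-type estimate for $g'$ is valid on the whole relevant component. A secondary point of care is the passage from a general starting time $m$ in part \emph{1.} to $m = M_{k-1}$ in part \emph{2.}: for intermediate $m$ one has extra factors from the trivial squaring maps $z \mapsto z^2$ at times that are not of the form $M_j-1$ or $M_j$, whose inverse branches $z \mapsto z^{1/2}$ have derivative $\tfrac{1}{2|z|^{1/2}}$ bounded by a universal constant on ${\mathrm A}(0,\tfrac13,3)$ since that annulus is backward invariant under such maps into itself by Claim \ref{InvariantAnnuliDiscs}(1) — so part \emph{1.}'s polynomial-in-$\sqrt{-b_k}$ bound survives, while part \emph{2.} is only claimed for the single step $m = M_{k-1}$ where this complication is absent. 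I would present part \emph{1.} first with the crude bound, then derive part \emph{2.} by the sharper computation above.
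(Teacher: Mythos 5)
Your overall decomposition---factor $f$ through time $M_k-1$ and use Claim~\ref{InverseBranchLowerBound1} to control $|h(z)-a_k|$---is the same strategy the paper uses, and your part~\emph{2.} computation is essentially correct and matches the paper's: the paper's single-line formula
$$|f'(z)| = \frac{1}{2^{M_k-m-1}\,|\sqrt{z-b_k}-a_k|^{1-\frac{1}{2^{M_k-m-1}}}}\cdot\frac{1}{2\sqrt{|z-b_k|}} < \frac{12\sqrt{-b_k}}{2^{M_k-m}}$$
(using $|z-b_k|>1$ and the lower bound $|\sqrt{z-b_k}-a_k|>\tfrac{1}{12\sqrt{-b_k}}$) simultaneously yields part~\emph{1.} since $M_k-m\ge 1$, and part~\emph{2.} on setting $m=M_{k-1}$ so that $M_k-m=m_k+1$ and invoking \eqref{mkbound2}.

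However, your part~\emph{1.} argument has a genuine sign error in the monotonicity. The derivative of the inverse root branch involves $|w-a_k|^{\frac{1}{2^{M_k-m-1}}-1}$ where the exponent is \emph{negative}, so the function is \emph{decreasing} in $|w-a_k|$. To get an upper bound on $|g'|$ you therefore need a \emph{lower} bound on $|w-a_k|$ (exactly what Claim~\ref{InverseBranchLowerBound1} supplies), not the upper bound $|w-a_k|<2\sqrt{-b_k}$ that you plug in. Your chain $|w-a_k|^{\frac{1}{2^{M_k-m-1}}-1}<(2\sqrt{-b_k})^{-1+o(1)}<\tfrac{1}{\sqrt{-b_k}}$ runs the inequality the wrong way and gives the false impression that $|f'|$ is tiny (of order $(-b_k)^{-1}$). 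In fact, for intermediate $m$ (e.g.\ $m=M_k-2$) the quantity $|f'|$ can already exceed $\tfrac12$, so the conclusion of part~\emph{1.} is \emph{not} ``far smaller than $6\sqrt{-b_k}$'': the generous constant is genuinely needed for the middle values of $m$, not (as you speculated) to absorb extra ``trivial-branch'' squaring steps. You can recover a correct part~\emph{1.} exactly by redoing the computation you do in part~\emph{2.}---using the lower bound on $|w-a_k|$---but keeping $m$ general and observing $2^{M_k-m}\ge 2$. The ``trivial-branch'' discussion at the end is a red herring: the whole factor $Q_{m,M_k-1}^{-1}$ is a single $2^{M_k-m-1}$-th root, not a succession of intermediate square roots that need separate tracking.
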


We draw the reader's attention to the fact that we will need both parts of the above to prove Claim \ref{ShrinkingSeparatingAnnuli1} when we show that the size of the annuli $B^{k,j}_m$ tends to zero as $k$ tends to infinity.

\begin{claimproof}  (of Claim \ref{Contracting1})
Since $|b_k| > 6 > 4$, if $z \in {\mathrm D}(0,3) \supset {\mathrm A}(0,\tfrac{1}{3}, 3)$, then $|z - b_k| > 1$. Using this, the chain rule, and the previous claim (Claim \ref{InverseBranchLowerBound1}),  if $z \in {\mathrm A}(0,\tfrac{1}{3}, 3)$ and $M_{k-1} \le m \le M_k - 1$,
\begin{eqnarray*}
|f'(z)| &=& \frac{1}{2^{M_k-m-1}|\sqrt{z - b_k} - a_k|^{1-\tfrac{1}{2^{M_k-m-1}}}}\cdot \frac{1}{2\sqrt{|z - b_k|}}\\
&<& \frac{1}{2^{M_k-m}}\cdot 12\sqrt{-b_k}\\
\end{eqnarray*}
from which \emph{1.} follows immediately. \emph{2.} follows on setting $m = M_{k-1}$ and making use of the lower bound \eqref{mkbound2} above and recalling from Definition \ref{Invariance1} that $M_k - M_{k-1} = m_k + 1$
(we leave it as an easy exercise to the reader to verify that this lower bound is larger than the earlier lower bound \eqref{mkbound1} on $m_k$ which was required to ensure the invariance condition \emph{1.} in Claim \ref{InvariantAnnuliDiscs}). \end{claimproof}

From now on we will assume the integers $m_k$ satisfy the lower bound \eqref{mkbound2} above.

Before we can begin verifying the five parts of the statement of Theorem \ref{MainTh1}, we need to establish some more claims. Recall the sets $\Hmt = \cap_{k \ge 1}H_m^k = Q_m(\cap_{k \ge 1}H_0^k)$ introduced in Definition \ref{HmtDef} just before the statement of Lemma \ref{HmtInvariance1}. 
\vspace{.3cm}

\begin{claim}
\label{Hmt}
For each $m \ge 0$ we have the following:
\begin{enumerate}
\item $\interior \,\Hmt$ is a non empty subset of the iterated Fatou set $\Fm$ with $a_k \in \tilde {\mathcal H}_m$ if $m = M_k -1$ for some $k$ while $0 \in \tilde {\mathcal H}_m$ for all other values of $m$,
\vspace{.25cm}
\item $\partial \Hmt \subset \Jm$,
\vspace{.25cm}
\item  $P_{m+1}(\interior \,\Hmt) = \interior\, \tilde {\mathcal H}_{m+1}$ and $P_{m+1}(\partial \Hmt) =\partial \tilde {\mathcal H}_{m+1}$,
\vspace{.25cm}
\item $\chat \setminus \Hmt$ is simply connected, $\Hmt$ is connected, while $\partial \Hmt$ is also connected and hence uniformly perfect. 
\end{enumerate}
\end{claim}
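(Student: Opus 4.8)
The plan is to establish the four items in the order (1), (3), (2), (4), since (3) feeds into (2) and the Jordan‑region bookkeeping used for (4) also clarifies the earlier parts.

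\emph{Part (1).} The heart of the matter is that ${\mathrm D}(0,\tfrac12)\subseteq\tilde{\mathcal H}_0$. I would prove by a backward induction on $j$ that ${\mathrm D}(0,\tfrac12)\subseteq H^k_{M_j}$ for $0\le j\le k-1$: the case $j=k-1$ is exactly Claim~\ref{InvariantAnnuliDiscs}\,\emph{3.}, and each inductive step is a pull‑back of ${\mathrm D}(0,\tfrac12)$ first through $P_{M_j}$ and then through $Q_{M_{j-1},M_j-1}(z)=z^{2^{m_j}}+a_j$, controlled by the disc estimates \eqref{InverseDisc1}, \eqref{InnerRadiusBound1} together with the lower bounds on $s_j$ in Lemmas~\ref{rksk} and \ref{rkskbounds} (no special times intervene between $M_{j-1}$ and $M_j-1$). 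Intersecting over $k$ gives ${\mathrm D}(0,\tfrac12)\subseteq\tilde{\mathcal H}_0$; applying $Q_m$ and using $\Hmt=Q_m(\cap_k H^k_0)$ from Definition~\ref{HmtDef} with the open mapping theorem yields $\emptyset\ne Q_m({\mathrm D}(0,\tfrac12))\subseteq\interior\Hmt$. Since $Q_{M_k}(H^k_0)=\overline{\mathrm D}(0,2)$ (forward invariance from Lemma~\ref{GmkHmkDecr} and the fact that $P_{M_k}$ maps the preimage component $H^k_{M_k-1}$ onto $\overline{\mathrm D}(0,2)$), we get $H^k_0\subseteq{\mathcal S}_k$, hence $\Hmt\subseteq\cap_k Q_m({\mathcal S}_k)$ and so $\interior\Hmt\subseteq\Fm$ by \eqref{Interior} of Theorem~\ref{ThmJm}. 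Finally $0\in{\mathrm D}(0,\tfrac12)\subseteq\cap_k H^k_0$, and since $\Hmt=Q_m(\cap_k H^k_0)$ while the forward orbit of $0$ sits at $0$ except at the times $M_k-1$, where it is at $a_k$, the remaining assertion of (1) follows.

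\emph{Part (3).} Both equalities come from Lemma~\ref{HmtInvariance1}\,\emph{2.}, \emph{3.} together with the open mapping theorem; the only non‑formal point is the inclusion $\{z\in\Hmt:P_{m+1}(z)\in\interior\tilde{\mathcal H}_{m+1}\}\subseteq\interior\Hmt$. For $m\ne M_k-1$ this is immediate from $\Hmt=P_{m+1}^{-1}(\tilde{\mathcal H}_{m+1})$. For $m=M_k-1$, Lemma~\ref{HmtInvariance1}\,\emph{3.} gives $\Hmt=P_{M_k}^{-1}(\tilde{\mathcal H}_{M_k})\cap H^k_{M_k-1}$; since $\tilde{\mathcal H}_{M_k}\subseteq H^k_{M_k}=\overline{\mathrm D}(0,2)$ one has $P_{M_k}(z)\in{\mathrm D}(0,2)$, and because $H^k_{M_k-1}$ is the closure of one component of the Jordan preimage $P_{M_k}^{-1}({\mathrm D}(0,2))$ this forces $z\in\interior H^k_{M_k-1}$, whence $z\in P_{M_k}^{-1}(\interior\tilde{\mathcal H}_{M_k})\cap\interior H^k_{M_k-1}\subseteq\interior\Hmt$. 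The boundary statement then follows from $\partial\Hmt=\Hmt\setminus\interior\Hmt$, $P_{m+1}(\Hmt)=\tilde{\mathcal H}_{m+1}$ and $P_{m+1}(\interior\Hmt)=\interior\tilde{\mathcal H}_{m+1}$.

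\emph{Part (2).} Write $E_m:=\{z:Q_{m,n}(z)\to\infty\text{ locally uniformly as }n\to\infty\}$ for the escaping set at time $m$. By the argument in the proof of Theorem~\ref{ThmJm} one has $E_m=\chat\setminus\cap_k Q_m({\mathcal S}_k)$, and a normal‑families argument (a point of $\Fm$ that is a limit of escaping points lies in a Fatou component on which the iterates tend locally uniformly to $\infty$) shows $E_m$ is open and relatively closed in $\Fm$, so $\partial E_m\subseteq\Jm$. Since $\Hmt\subseteq\cap_k Q_m({\mathcal S}_k)=\chat\setminus E_m$, it suffices to prove $\partial\Hmt\subseteq\overline{E_m}$. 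To this end: every point of $\partial H^k_{M_k-1}$ is carried by $P_{M_k}=Q_{M_k-1,M_k}$ onto ${\mathrm C}(0,2)$, so its orbit has modulus $4$ one step later and then escapes via \eqref{Invariance1}, \eqref{Invariance2} as in Theorem~\ref{ThmJm}; since preimages and forward images of escaping points escape and $\partial H^j_{M_j-1}\subseteq E_{M_j-1}$, a downward induction through Definition~\ref{GmkHmkDef} (using $\partial(f^{-1}(A))\subseteq f^{-1}(\partial A)$ and $\partial(A\cap B)\subseteq\partial A\cup\partial B$) gives $\partial H^k_m\subseteq E_m$ for all $k,m$. On the other hand, for the nested compacta $\Hmt=\cap_k H^k_m$ one always has $\partial\Hmt\subseteq\overline{\cup_k\partial H^k_m}$ (follow a short segment from $z\in\partial\Hmt$ to a nearby point outside $\Hmt$ until it first leaves some $H^k_m$). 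Hence $\partial\Hmt\subseteq\overline{E_m}\cap(\chat\setminus E_m)=\partial E_m\subseteq\Jm$.

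\emph{Part (4), and the main obstacle.} For $k$ with $M_k-1\ge m$ (which suffices, since $\Hmt=\cap_{k\ge k_0}H^k_m$ for any $k_0$ by Lemma~\ref{HmtInvariance1}\,\emph{1.}), the set $H^k_m$ is a closed Jordan region: $H^k_{M_k-1}$ is a component of the Jordan preimage $P_{M_k}^{-1}(\overline{\mathrm D}(0,2))$, and each backward step is either a full preimage under $z^2$ or $z^2+a_j$ of a set containing the relevant critical value — hence a connected, single‑sheeted preimage, which is the mechanism behind Claim~\ref{HmkConn} — or, at a special time $M_j-1$, the intersection of the two‑sheeted preimage of $P_{M_j}$ with the Jordan region $H^j_{M_j-1}$, which retains exactly the sheet lying in $\interior H^j_{M_j-1}$ (since $H^k_{M_j}\subseteq{\mathrm D}(0,1)$). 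Therefore $\Hmt$ is a nested intersection of continua, hence a continuum, and $\chat\setminus\Hmt=\bigcup_k(\chat\setminus H^k_m)$ is an increasing union of simply connected domains, hence simply connected. The boundary of a proper simply connected subdomain of $\chat$ is always connected (it is a decreasing intersection of connected compact sets, namely the closures of the sub‑level sets of a Green's function), so $\partial\Hmt$ is a continuum; it has more than one point since $\interior\Hmt\ne\emptyset$ and $\Hmt\ne\chat$, hence it is uniformly perfect by the remark following Definition~\ref{updef}. The step I expect to be most delicate is (2): identifying the correct mechanism — points of $\partial H^k_m$ escape while $\Hmt$ consists of non‑escaping points, so $\partial\Hmt$ is squeezed onto $\partial E_m$ — and verifying that the escaping property genuinely propagates through the intersections with the $H^j_{M_j-1}$ at the special times $M_j-1$.
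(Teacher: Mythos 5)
Your proof is correct but takes a genuinely different route from the paper's in several places, so the comparison is worth recording. For Part~\emph{1}, the backward induction ${\mathrm D}(0,\tfrac12)\subseteq H^k_{M_j}$ using \eqref{InverseDisc1}, \eqref{InnerRadiusBound1}, and Lemmas~\ref{rksk}, \ref{rkskbounds} is cleaner than the paper's argument, which instead tracks which of the finitely many preimage components of $H^{k}_{M_{k_0+1}}$ the disc lands in and identifies it with $H^k_{M_{k_0}}$. For Part~\emph{3}, the paper relies on Parts~\emph{1} and \emph{2} (already proved) together with Theorem~\ref{ThmCompInvar} to get $P_{m+1}(\partial\Hmt)\subseteq\partial\tilde{\mathcal H}_{m+1}$, whereas you prove it directly from Lemma~\ref{HmtInvariance1}\,\emph{3.}, treating the special times $M_k-1$ separately; this is why your ordering (1), (3), (2), (4) is consistent. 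For Part~\emph{2}, the paper argues by contradiction via Fatou components: if $z\in\partial\Hmt\cap\Fm$, the component $U\ni z$ is shown (through the $H^k$ combinatorics) to lie in $\Hmt$, so $z\in\interior\Hmt$. You instead use the escaping set $E_m=\chat\setminus\cap_k Q_m({\mathcal S}_k)$: $\partial H^k_m\subseteq E_m$ (propagating via $\partial(f^{-1}A)\subseteq f^{-1}(\partial A)$ and $\partial(A\cap B)\subseteq\partial A\cup\partial B$), while $\Hmt$ avoids $E_m$ and $\partial\Hmt\subseteq\overline{\cup_k\partial H^k_m}$ by the segment argument, so $\partial\Hmt\subseteq\partial E_m\subseteq\Jm$; this is more purely topological and sidesteps the Fatou-component bookkeeping, at the modest cost of verifying that $E_m$ is relatively closed in $\Fm$. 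For Part~\emph{4}, the paper shows $\chat\setminus\Hmt$ has no bounded complementary component by contradiction (again through the Fatou set, using Lemma~\ref{JSubseq}) and then cites Conway and Newman; you instead sharpen Claim~\ref{HmkConn} to the observation that each $H^k_m$ with $M_k-1\ge m$ is a closed Jordan region (full pullbacks keep the critical value inside, single-sheet restrictions at special times keep it a Jordan region because $H^k_{M_j}$ is interior to $\overline{\mathrm D}(0,2)$), so that $\chat\setminus\Hmt$ is an increasing union of simply connected domains. Both approaches are valid; yours is more self-contained and avoids citing Fatou-set structure in Part~\emph{4}, while the paper's avoids having to justify the Jordan-region assertion. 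One small point of caution: in Part~\emph{2}, you assert $\partial H^k_m\subseteq E_m$ ``for all $k,m$,'' but the downward induction only gives this for $m\le M_k-1$; since $\Hmt=\cap_{k\ge k_0}H^k_m$ for $k_0$ as large as you like, this is all you need, but the statement should be scoped accordingly.
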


The reader might find it helpful to consult Figures \ref{MainTh1Picture}, \ref{HmtPicture}, and \ref{Mk0Picture} before embarking on the details of the proof below. It is particularly instructive to examine the difference between the first two figures, the main one being that Figure \ref{MainTh1Picture} shows merely the sets $G^k_m$, $H^k_m$ at certain times while Figure \ref{HmtPicture} shows all the preimage components of the sets $\overline {\mathrm D}(0,2)$ at times $M_k$, the `extra' components being shown in purple.

\afterpage{\clearpage}

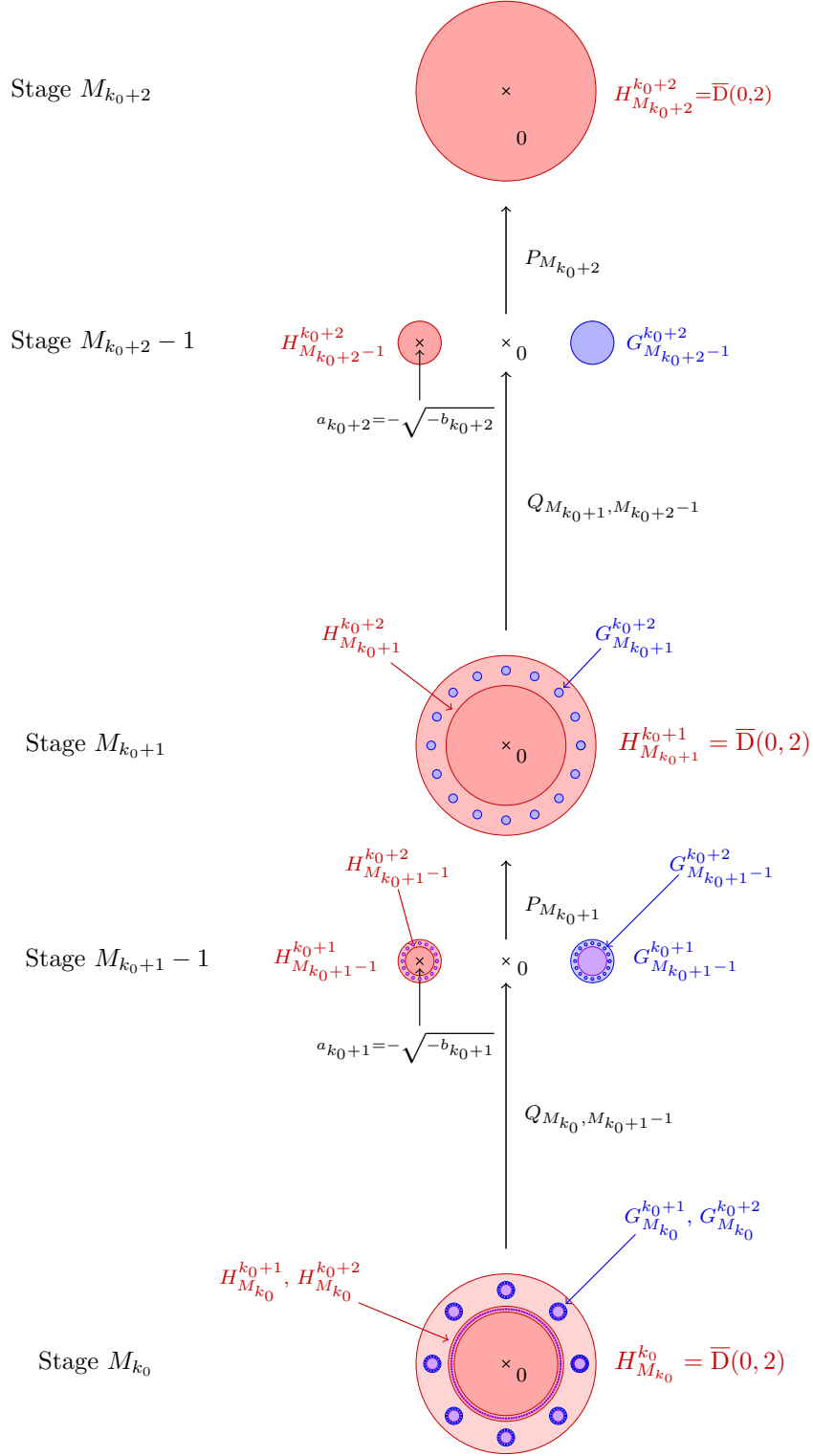
\begin{figure}
\vspace{.5cm}
\begin{tikzpicture}

\node at (-5.1,11.5) {\footnotesize Stage $M_{k_0+2}$};

\node at (1.6, 9.1) {$\scriptstyle P_{M_{k_0+2}}$};

{\color{darkred}
\draw[line width=0.1mm, fill=evenlesspalepink] (.8,11.5) circle (1.25);}

 {\color{black}
  \draw[line width=0.15mm] (.75,11.45) -- (.85,11.55);
  \draw[line width=0.15mm] (.75,11.55) -- (.855,11.45);
  
  \node at (1.02,10.85) {$\scriptstyle 0$};
  
  }
  
  \draw[line width=0.2mm, ->] (.8,8.4) -- (.8,9.9);

\node at (3.4, 11.4) {\color{darkred}$\scriptstyle H^{k_0+2}_{M_{k_0+2}} = \overline {\mathrm{D}}(0,2)$};

\node at (-4.8,8.0) {\footnotesize Stage $M_{k_0+2}-1$};

  \node at (3.2, 7.95) {\color{darkblue}$\scriptstyle  G^{k_0+2}_{M_{{k_0+2}}-1}$};

  {\color{darkblue}
  \draw[fill=evenlesspaleblue] (2,8) circle (.3);}
  
  \node at (-1.6, 7.95) {\color{darkred}$\scriptstyle  H^{k_0+2}_{M_{{k_0+2}}-1}$};

   {\color{darkred}
  \draw[fill=evenlesspalepink] (-.4,8) circle (.3);}
  
  {\color{black}
  \draw[line width=0.15mm] (-.45,7.95) -- (-.35,8.05);
  \draw[line width=0.15mm] (-.45,8.05) -- (-.35,7.95);
  
  \node at (-.6,6.9) {$\scriptscriptstyle a_{k_0+2} = - \sqrt{-b_{k_0+2}}$};
  
  \draw[line width=0.15mm, ->] (-.4,7.2) -- (-.4,7.9);
  }
  
     {\color{black}
  \draw[line width=0.15mm] (.75,7.95) -- (.85,8.05);
  \draw[line width=0.15mm] (.75,8.05) -- (.855,7.95);
  
  \node at (1.02,7.85) {$\scriptstyle 0$};
  
  }

\draw[line width=0.2mm, ->] (.8,4) -- (.8,7.6);

\node at (2.3, 5.7) {$\scriptstyle Q_{M_{k_0+1},M_{k_0+2} -1}$};

   \node at (-4.9,2.4) {\footnotesize Stage $M_{k_0+1}$};
  
  {\color{darkred}
\draw[line width=0.1mm, fill=lesspalepink] (.8,2.4) circle (1.25);}

{\color{darkred}
\draw[line width=0.1mm, fill=evenlesspalepink] (.8,2.4) circle (0.833);}

{\color{darkblue}
  \foreach \phi in {0,22.5,...,360}{
    \draw[line width=0.1mm,fill=evenlesspaleblue] ({.8+1.04*cos(\phi)},{2.4+1.04*sin(\phi)}) circle (.06);
  }}
  
     {\color{black}
  \draw[line width=0.15mm] (.75,2.35) -- (.85,2.45);
  \draw[line width=0.15mm] (.75,2.45) -- (.855,2.35);
  
  \node at (1.02,2.25) {$\scriptstyle 0$};
  
  }
  
  {\color{darkred}
  
    \node at (-1.2, 3.9) {\color{darkred}$\scriptstyle  H^{k_0+2}_{M_{k_0+1}}$};
  
    \draw[line width=0.15mm, ->] (-.8,3.55) -- (0.05,2.9);
  }

 {\color{darkblue}
  
    \node at (2.6, 3.9) {\color{darkblue}$\scriptstyle  G^{k_0+2}_{M_{k_0+1}}$};
  
    \draw[line width=0.15mm, ->] (2.12,3.75) -- (1.6,3.2);
  }

\draw[line width=0.2mm, ->] (.8,-.3) -- (.8,0.8);  

\node at (1.6, 0.1) {$\scriptstyle P_{M_{k_0+1}}$};
  
    {\color{darkred}
  \draw[line width=0.05mm,, fill=lesspalepink] (-.4,-.6) circle (.3);}
  
      {\color{darkred}
  \draw[line width=0.05mm,, fill=evenlesspalepink] (-.4,-.6) circle (.2);
  
  \node at (-1.7, -.6) {\color{darkred}$\scriptstyle  H^{k_0+1}_{M_{k_0+1}-1}$};
  
  \node at (-.7, 0.7) {\color{darkred}$\scriptstyle  H^{k_0+2}_{M_{k_0+1}-1}$};
  
    \draw[line width=0.15mm, ->] (-.7,0.4) -- (-.48,-.4);
  }
  
  {\color{black}
  \draw[line width=0.15mm] (-.45,-.65) -- (-.35,-.55);
  \draw[line width=0.15mm] (-.45,-.55) -- (-.35,-.65);
  
  \node at (-.6,-1.8) {$\scriptscriptstyle a_{k_0+1} = - \sqrt{-b_{k_0+1}}$};
  
  \draw[line width=0.15mm, ->] (-.4,-1.5) -- (-.4,-.7);
  }
  
   \node at (3.7, 2.4) {\color{darkred} \footnotesize$ H^{k_0+1}_{M_{k_0+1}} = \overline {\mathrm{D}}(0,2)$};
  
     \node at (-4.6,-.6) {\footnotesize Stage $M_{k_0+1} - 1$};
  
   {\color{darkblue}
  \draw[line width=0.05mm,, fill=lesspaleblue] (2,-.6) circle (.3);}
  
  {\color{darkblue}
  \foreach \phi in {0,22.5,...,360}{
    \draw[line width=0.05mm,fill=evenlesspaleblue] ({2+.24*cos(\phi)},{-.6+.25*sin(\phi)}) circle (.02);
  }}
  
       {\color{darkpurple}
  \draw[line width=0.05mm,, fill=evenlesspalepurple] (2,-.6) circle (.2);}
 
     \node at (3.3, -.6) {\color{darkblue}$\scriptstyle  G^{k_0+1}_{M_{k_0+1}-1}$};
  
   {\color{darkpurple}
  \foreach \phi in {0,22.5,...,360}{
    \draw[line width=0.05mm,fill=evenlesspalepurple] ({-.4+.24*cos(\phi)},{-.6+.25*sin(\phi)}) circle (.02);
  }}
  
    {\color{darkblue}
  
    \node at (3.8, 0.7) {\color{darkblue}$\scriptstyle  G^{k_0+2}_{M_{k_0+1}-1}$};
  
    \draw[line width=0.15mm, ->] (3.2,0.6) -- (2.2,-.4);
  }
  
    {\color{black}
  \draw[line width=0.15mm] (.75,-.55) -- (.85,-.65);
  \draw[line width=0.15mm] (.75,-.65) -- (.85,-.55);
  
  \node at (1.03,-.702) {$\scriptstyle 0$};
 
  }   
  
  \draw[line width=0.2mm, ->] (.8,-4.6) -- (.8,-0.9);
  
  \node at (2.1, -2.8) {$\scriptstyle Q_{M_{k_0},M_{k_0+1} -1}$};
  
     \node at (3.5, -6.2) {\color{darkred} \footnotesize$ H^{k_0}_{M_{k_0}} = \overline {\mathrm{D}}(0,2)$};
  
    \node at (-4.9,-6.2) {\footnotesize Stage $M_{k_0}$};
  
    {\color{darkred}
\draw[line width=0.1mm, fill=palepink] (.8,-6.2) circle (1.25);}

{\color{darkred}
\draw[line width=0.1mm, fill=lesspalepink] (.8,-6.2) circle (0.8);}

{\color{darkred}
\draw[line width=0.1mm, fill=evenlesspalepink] (.8,-6.2) circle (0.72);}

   {\color{black}
  \draw[line width=0.15mm] (.75,-6.15) -- (.85,-6.25);
  \draw[line width=0.15mm] (.75,-6.25) -- (.855,-6.15);
  
  \node at (1.02,-6.35) {$\scriptstyle 0$};
  
  }

{\color{darkblue}
  \foreach \phi in {0,45,...,360}{
    \draw[line width=0.1mm,fill=lesspaleblue] ({.8+1.025*cos(\phi)},{-6.2+1.025*sin(\phi)}) circle (.12);
  }}
  
  {\color{darkpurple}
  \foreach \phi in {0,45,...,360}{
    \draw[line width=0.1mm,fill=evenlesspalepurple] ({.8+1.025*cos(\phi)},{-6.2+1.025*sin(\phi)}) circle (.08);
  }}

{\color{darkblue}
\foreach \psi in {0, 45,..., 360}{
  \foreach \phi in {0,22.5,...,360}{
    \draw[line width=0.02mm,fill=evenlesspaleblue] ({.8+1.025*cos(\psi) + .1*cos(\phi)},{-6.2+1.025*sin(\psi) + .1*sin(\phi)}) circle (.01);
  }}}

  {\color{darkpurple}
  \foreach \phi in {0,2.8125,...,360}{
    \draw[line width=0.01mm,fill=evenlesspalepurple] ({.8+.76*cos(\phi)},{-6.2+.76*sin(\phi)}) circle (.01);
  }}

   {\color{darkred}
  
    \node at (-2.2, -5.05) {\color{darkred}$\scriptstyle  H^{k_0+1}_{M_{k_0}}, \,\,H^{k_0+2}_{M_{k_0}}$};
  
    \draw[line width=0.15mm, ->] (-1.25,-5.35) -- (0.0,-5.87);
  }

    {\color{darkblue}
  
    \node at (3.4, -4.2) {\color{darkblue}$\scriptstyle  G^{k_0+1}_{M_{k_0}}, \,\,G^{k_0+2}_{M_{k_0}}$};
  
    \draw[line width=0.15mm, ->] (2.6,-4.4) -- (1.63,-5.37);
  }


\end{tikzpicture}   
\caption{The setup for Claim \ref{Hmt}. The next figure shows a blow-up of the picture at time $M_{k_0}$}  \label{HmtPicture}
\end{figure}

\begin{claimproof} (of Claim \ref{Hmt}) To prove {\emph 1.}, we first assume $m = M_{k_0}$ for some $k_0$ after which the general case will follow easily. By \emph{3.} of Claim \ref{InvariantAnnuliDiscs} with $k=k_0+1$, on taking preimages under $Q_{M_{k_0}, M_{k_0+1}-1}$, we have 
${\mathrm D}(0, \tfrac{1}{2}) \subset (Q_{M_{k_0}, M_{k_0+1}-1})^{-1}(H^{k_0+1}_{M_{k_0+1}-1})$. Since by Definition \ref{GmkHmkDef} we have   $H^{k_0+1}_{M_{k_0}} = (Q_{M_{k_0}, M_{k_0+1}-1})^{-1}(H^{k_0+1}_{M_{k_0+1}-1})$, we must then have ${\mathrm D}(0, \tfrac{1}{2}) \subset H^{k_0+1}_{M_{k_0}}$. 

In exactly the same way, we see that at time $M_{k_0+1}$ we also have ${\mathrm D}(0, \tfrac{1}{2}) \subset H^{k_0+2}_{M_{k_0+1}}$. If we now take preimages under the composition $Q_{M_{k_0}, M_{k_0+1}}$ and use \emph{2.} of Claim  \ref{InvariantAnnuliDiscs}, we have that 
$${\mathrm D}(0, \tfrac{1}{2}) \subset (Q_{M_{k_0}, M_{k_0+1}})^{-1}({\mathrm D}(0, \tfrac{1}{2})) \subset (Q_{M_{k_0}, M_{k_0+1}})^{-1}(H^{k_0+2}_{M_{k_0+1}}).$$

The connected set ${\mathrm D}(0, \tfrac{1}{2})$ at time $M_{k_0}$ thus lies in a single connected component of $(Q_{M_{k_0}, M_{k_0+1}})^{-1}(H^{k_0+2}_{M_{k_0+1}})$.  However, we already saw above that 
${\mathrm D}(0, \tfrac{1}{2}) \subset H^{k_0+1}_{M_{k_0}}$ and by \emph{1.} of Lemma \ref{GmkHmkDecr} we have that $H_{M_{k_0+1}-1}^{k_0+2} \subset H_{M_{k_0+1}-1}^{k_0+1}$. If we then take the preimage under $Q_{M_{k_0},M_{k_0+1}-1}$ and recall that by Definition \ref{GmkHmkDef} we have $H_{M_{k_0}}^{k_0+1} = (Q_{M_{k_0},M_{k_0+1}-1})^{-1}(H_{M_{k_0+1}-1}^{k_0+1})$, we see that every component of $(Q_{M_{k_0},M_{k_0+1}-1})^{-1}(H_{M_{k_0+1}-1}^{k_0+2})$ must be contained in $H_{M_{k_0}}^{k_0+1} $. We can then conclude that ${\mathrm D}(0, \tfrac{1}{2})$ lies in a single component of 
$(Q_{M_{k_0}, M_{k_0+1}})^{-1}(H^{k_0+2}_{M_{k_0+1}})$ which in turn lies in $H^{k_0+1}_{M_{k_0}}$. We now turn to showing that this single component of $(Q_{M_{k_0}, M_{k_0+1}})^{-1}(H^{k_0+2}_{M_{k_0+1}})$ must in fact be $H^{k_0+2}_{M_{k_0}}$.

Recall that in view of Claim \ref{HmkConn}, Definition \ref{GmkHmkDef} (applied to $H^{k_0+2}_{M_{k_0+1}}$), and the backward invariance condition \eqref{Invariance3}, $H^{k_0+2}_{M_{k_0+1}}$ is a connected subset of $\overline {\mathrm D}(0,2)$.
By Definition \ref{GmkHmkDef} (applied this time to $G^{k_0+1}_{M_{k_0+1}-1}$ and $H^{k_0+1}_{M_{k_0+1}-1}$), the preimage of this set under $P_{M_{k_0+1}}$ consists of two components - one in the set $G^{k_0+1}_{M_{k_0+1}-1}$ and the other in the set $H^{k_0+1}_{M_{k_0+1}-1}$. Since ${\mathrm D}(0, \tfrac{1}{2}) \subset H^{k_0+2}_{M_{k_0+1}}$ from above while clearly $0 = P_{M_{k_0+1}}(a_{k_0+1}) \in {\mathrm D}(0, \tfrac{1}{2})$, this second component (i.e. the one contained in $H^{k_0+1}_{M_{k_0+1}-1}$) will contain the single critical value $a_{k_0+1} = - \sqrt{-b_{k_0+1}}$ of $Q_{M_{k_0}, M_{k_{0}+1}-1}$. On taking preimages under this polynomial of degree $2^{m_{k_0}}$ and using the fact that, from Definition \ref{GmkHmkDef} we have $G^{k_0+1}_{M_{k_0}} = (Q_{M_{k_0}, M_{k_{0}+1}-1})^{-1}(G^{k_0+1}_{M_{k_0+1}-1})$ and $H^{k_0+1}_{M_{k_0}}= (Q_{M_{k_0}, M_{k_{0}+1}-1})^{-1}(H^{k_0+1}_{M_{k_0+1}-1})$, we see that $(Q_{M_{k_0}, M_{k_0+1}})^{-1}(H^{k_0+2}_{M_{k_0+1}})$ consists of $2^{m_{k_0}}$ components which lie in $G^{k_0+1}_{M_{k_0}}$ and a single additional component lying inside $H^{k_0+1}_{M_{k_0}}$. 

Since by Definition \ref{GmkHmkDef} $H^{k_0+2}_{M_{k_0}}$ is the preimage of $H^{k_0+2}_{M_{k_0+1}-1} $ (under $Q_{M_{k_0}, M_{k_0+1}-1}$) while by Lemma \ref{GmkHmkDecr} it is a subset of $H^{k_0+1}_{M_{k_0}}$, this last component of $(Q_{M_{k_0}, M_{k_0+1}})^{-1}(H^{k_0+2}_{M_{k_0+1}})$ at time $M_{k_0}$ is the unique component which lies inside $H^{k_0+1} _{M_{k_0}}$ and so must 
then be the (connected) set $H^{k_0+2}_{M_{k_0}}$. Thus it must be the case that ${\mathrm D}(0, \tfrac{1}{2}) \subset H^{k_0+2}_{M_{k_0}}$.

In exactly the same way, one can deduce that at time $M_{k_0+1}$ we have  ${\mathrm D}(0, \tfrac{1}{2}) \subset H^{k_0+3}_{M_{k_0+1}}$ and a very similar argument to the above allows us to then deduce that  ${\mathrm D}(0, \tfrac{1}{2}) \subset H^{k_0+3}_{M_{k_0}}$. Continuing inductively in this way, we have that ${\mathrm D}(0, \tfrac{1}{2}) \subset H^{k}_{M_{k_0}}$ for every $k > k_0$ and, by Lemma \ref{HmtInvariance1}, it follows that ${\mathrm D}(0, \tfrac{1}{2}) \subset \tilde {\mathcal H}_{M_{k_0}}$ and so ${\mathrm D}(0, \tfrac{1}{2}) \subset \interior \,\tilde {\mathcal H}_{M_{k_0}}$. Thus $\interior \,\tilde {\mathcal H}_{M_{k_0}}$ is non-empty and it then follows immediately from \emph{2.} of Lemma \ref{HmtInvariance1} and the open mapping theorem that $\interior \,\Hmt$ is non-empty for any $m \ge 0$ and that in addition it also readily follows that $a_k \in \tilde {\mathcal H}_m$ if $m = M_k -1$ for some $k$ while $0 \in \tilde {\mathcal H}_m$ for all other values of $m$.

To show that $\interior \,\Hmt \subset \Fm$, by definition of $\Hmt$ as well as the sets $H_0^k$, ${\mathcal S}_k$ (Definitions \ref{SkDef}, \ref{GmkHmkDef}, \ref{HmtDef}), we have $\Hmt \subset Q_m(\cap_{k \ge 1} {\mathcal S}_k)$. Hence, by \eqref{Interior} in the statement of Theorem \ref{ThmJm},  $\interior\, \Hmt \subseteq \interior \,Q_m(\cap_{k \ge 1} {\mathcal S}_k) \subseteq \interior \cap_{k \ge 1} Q_m({\mathcal S}_k) \subseteq \Fm$ while we showed above that $\interior \,\Hmt \neq \emptyset$, which proves \emph{1.} 


\begin{figure}
\vspace{.5cm}
\begin{tikzpicture}

    {\color{darkred}
\draw[line width=0.1mm, fill=palepink] (0,0) circle (6);}

    {\color{darkred}
\draw[line width=0.1mm, fill=lesspalepink] (0,0) circle (4);}

    {\color{darkred}
\draw[line width=0.1mm, fill=evenlesspalepink] (0,0) circle (3.77);}

{\color{darkblue}
  \foreach \phi in {0,45,...,360}{
    \draw[line width=0.05mm,fill=lesspaleblue] ({5*cos(\phi)},{5*sin(\phi)}) circle (.5);
  }}
  
  {\color{darkpurple}
  \foreach \phi in {0,45,...,360}{
    \draw[line width=0.05mm,fill=evenlesspalepurple] ({5*cos(\phi)},{5*sin(\phi)}) circle (.33);
  }}
  
  {\color{darkblue}
\foreach \psi in {0, 45,..., 360}{
  \foreach \phi in {0,22.5,...,360}{
    \draw[line width=0.05mm,fill=evenlesspaleblue] ({5*cos(\psi) + .4167*cos(\phi)},{5*sin(\psi) + .4167*sin(\phi)}) circle (.042);
  }}}
  
  {\color{darkpurple}
  \foreach \phi in {0,2.8125,...,360}{
    \draw[line width=0.05mm,fill=evenlesspalepurple] ({3.885*cos(\phi)},{3.885*sin(\phi)}) circle (.042);
  }}

    {\color{black}
  \draw[line width=0.2mm] (-.08,-.08) -- (.08,.08);
  \draw[line width=0.2mm] (-.08,.08) -- (.08,-.08);
  
  \node at (.22,-.22) {\footnotesize $0$};
 
  }

 {\color{darkred}  
 
   \node at (7.7, -0.05) {\color{darkred} \footnotesize$ H^{k_0}_{M_{k_0}} = \overline {\mathrm{D}}(0,2)$};
   
   \draw[line width=0.25mm, ->] (6.45,0) -- (6.05,0.0);
   
     \node at (6.8, 1.7) {\color{darkred} \footnotesize$ H^{k_0+1}_{M_{k_0}}$};
     
      \draw[line width=0.25mm, ->] (6.2,1.7) -- (3.9,1.1);

       \node at (6.2, 3.2) {\color{darkred} \footnotesize$ H^{k_0+2}_{M_{k_0}}$};
      
      \draw[line width=0.25mm, ->] (5.7,3.1) -- (3.36,1.8); 
 }

 {\color{darkblue}
 
  \node at (5.1, 4.7) {\color{darkblue} \footnotesize$ G^{k_0+1}_{M_{k_0}}$};

  \draw[line width=0.25mm, ->] (4.6,4.6) -- (3.93,3.93);
  
  \node at (3.7, 5.75) {\color{darkblue} \footnotesize$ G^{k_0+2}_{M_{k_0}}$};
  
  \draw[line width=0.25mm, ->] (3.55,5.5) -- (3.55,3.95);
 }

\end{tikzpicture}   
\caption{A blow-up of the picture at time $M_{k_0}$}  \label{Mk0Picture}
\end{figure}
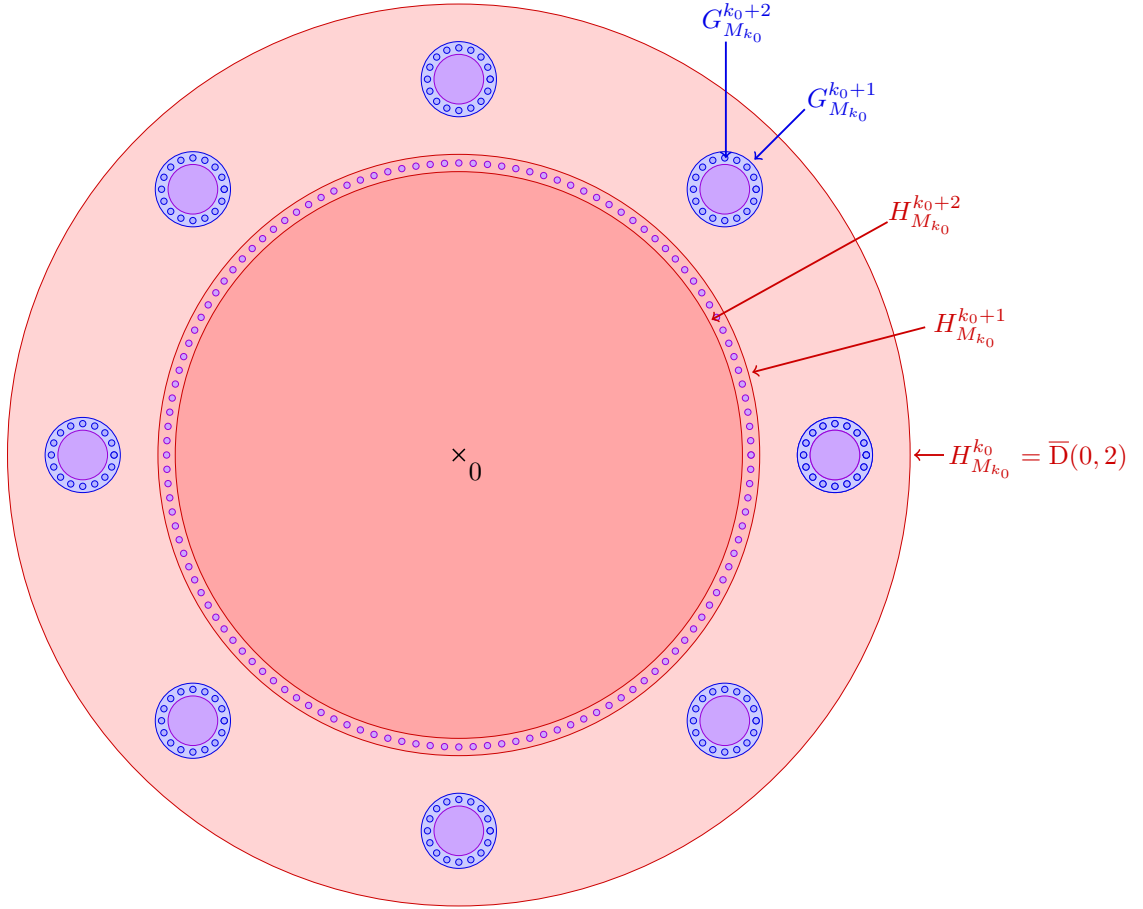


To prove \emph{2.}, let $m \ge 0$, let $z \in \partial \Hmt \subset \Hmt$ and pick $k_0$ as small as possible so that $m \le M_{k_0} - 1$. Suppose for the sake of a contradiction that $z \in \Fm$, the Fatou set at time $m$. Then $z \in U$ for some Fatou component $U$ at this time $m$. Also, since $z \in \Hmt$, by \emph{2.} of Lemma \ref{HmtInvariance1} and Definitions \ref{GmkHmkDef}, \ref{HmtDef}, at the times $M_k$, $k \ge k_0$ we have $Q_{m, M_k}(z) \in  \tilde {\mathcal H}_{M_k}  \subset H^k_{M_k} = \overline {\mathrm D}(0, 2)$. Thus the orbit of $z$ remains bounded at these times. Since $U$ is a Fatou component and thus path connected, it follows by a routine argument that the orbits of all points in $U$ must remain bounded, at least at the times $M_k$. Finally, by complete invariance (Theorem \ref{ThmCompInvar}) we clearly have that at time $M_k$ the Fatou component $U_{M_k} = Q_{m, M_k}(U)$ must meet $\overline {\mathrm D}(0, 2)$. 

At the immediately preceding times $M_k -1$ (provided $m < M_k$) the iterated Fatou component $U_{M_k-1} = Q_{m, M_k-1}(U)$ must then meet (at least one of the) two disjoint preimages $G^k_{M_k -1}$, $H^k_{M_k -1}$ of this disc. However, any point outside these two sets will be mapped outside $\overline {\mathrm D}(0, 2)$ 
by $P_{M_k}$ and its orbit will then escape to infinity in view of \eqref{Invariance1}, \eqref{Invariance2}. Since we saw that all points of $U$ have bounded orbits (at least at the times $M_k$), it follows that the connected set $U_{M_k-1}$ lies entirely inside $G^k_{M_k -1}$ or $H^k_{M_k -1}$ and since $z \in \Hmt$, again by \emph{2.} of Lemma \ref{HmtInvariance1}, it follows that  $Q_{m, M_k-1}(z) \in \tilde {\mathcal H}_{M_k -1} \subset H^k_{M_k-1}$ so that $U_{M_k-1}$ must be a subset of $H^k_{M_k -1}$. 

It follows from this using Theorem \ref{ThmCompInvar}, on taking preimages under $Q_{m, M_{k_0 -1}}$ and recalling the definition of $H^{k_0}_m$ (Definition \ref{GmkHmkDef}) together with the minimality of $k_0$, that 

$$U \subset (Q_{m,M_{k_0}-1})^{-1}(U_{M_{k_0}-1}) \subset (Q_{m,M_{k_0}-1})^{-1}(H_{M_{k_0}-1}^{k_0}) = H_m^{k_0}.$$

At time $M_{k_0}$, if we replace $k_0$ by $k_0 +1$ and $U$ at time $m$ by $U_{M_{k_0+1}}$ we likewise have that 

\begin{eqnarray*}
\hspace{2cm}U_{M_{k_0}} &\subset& (Q_{M_{k_0},M_{k_0+1}-1})^{-1}(U_{M_{k_0+1}-1})\\ 
&\hspace{1.2cm}& \subset (Q_{M_{k_0},M_{k_0+1}-1})^{-1}(H_{M_{k_0+1}-1}^{k_0+1}) = H_{M_{k_0}}^{k_0+1}.\\ 
\end{eqnarray*}

Recalling that the set $H^{k_0 + 1}_{M_{k_0}}$ is path connected and thus connected in view of Claim \ref{HmkConn}, 
taking preimages under $P_{M_{k_0}}$, we see that the connected set $U_{M_{k_0}-1}$ belongs to one of the two connected components of $(P_{M_{k_0}})^{-1}(H^{k_0 + 1}_{M_{k_0}})$. By \emph{1.} of Lemma \ref{GmkHmkDecr} and Definition \ref{GmkHmkDef}, $H^{k_0 + 1}_{M_{k_0}} \subset  H^{k_0}_{M_{k_0}} = \overline {\mathrm D}(0,2)$ so that one of these preimage components lies in $G^{k_0}_{M_{k_0}-1}$ while the other lies in $H^{k_0}_{M_{k_0}-1}$. However, as above since $z \in \Hmt$, by \emph{2.} of Lemma \ref{HmtInvariance1},  $Q_{m, {M_{k_0}-1}}(z) \in  \tilde {\mathcal H}_{M_{k_0}-1}$, so that by Definition \ref{HmtDef} $U_{M_{k_0}-1}$ lies in that preimage component of $H^{k_0 + 1}_{M_{k_0}}$ which is in $H^{k_0 }_{M_{k_0}-1}$ which (from Definition \ref{GmkHmkDef} of the sets $H^k_m$) must then be $H^{k_0+1}_{M_{k_0}-1}$ and on again taking preimages under $Q_{m, M_{k_0-1}}$ and applying Definition \ref{GmkHmkDef} we then have that $U \subset H^{k_0+1}_m$. Continuing inductively in a similar way we see that $U \subset H^k_m$ for all $k \ge k_0$ and by \emph{1} of Lemma \ref{HmtInvariance1}, it follows that $U \subset \Hmt$. Since $U$ is open, $z \in \interior \, \Hmt$ which contradicts our assumption that $z \in \partial \Hmt$, which proves \emph{2.}


For \emph{3.} recall that by \emph{2.} of Lemma \ref{HmtInvariance1}, $P_{m+1}(\Hmt) = \tilde {\mathcal H}_{m+1}$. By the open mapping theorem $P_{m+1}(\interior \,\Hmt) \subset \interior\, \tilde {\mathcal H}_{m+1}$. On the other hand, by \emph{1.}, \emph{2.} above, \emph{2.} of Lemma \ref{HmtInvariance1}, and Theorem \ref{ThmCompInvar}, $P_{m+1}(\partial \Hmt) \subset \partial \tilde {\mathcal H}_{m+1}$. From this and \emph{2.} of Lemma \ref{HmtInvariance1} again, it follows immediately that we have equality in both of these and $P_{m+1}(\interior \,\Hmt) = \interior\, \tilde {\mathcal H}_{m+1}$ and $P_{m+1}(\partial \Hmt) =\partial \tilde {\mathcal H}_{m+1}$ as desired. 

For \emph{4.} recall that by Lemma \ref{GmkHmkDecr} and Claim \ref{HmkConn} $\Hmt$ is connected as it is an intersection of nested compact connected sets. Suppose now that $\chat \setminus \Hmt$ has a bounded complementary component which we call $U$ and again let $k_0$ be as small as possible such that $m \le M_{k_0} - 1$. Then $\partial U \subset \Hmt$ and at the times $M_k$, $k \ge k_0$, since by \emph{2.} of Lemma \ref{HmtInvariance1} $Q_{m,M_k}(\Hmt) = \tilde {\mathcal H}_{M_k} \subset H^k_{M_k} = \overline {\mathrm D}(0,2)$, it follows by Theorem \ref{ThmCompInvar} and the maximum principle that $U_{M_k} = Q_{m, M_k}(U) \subset \overline {\mathrm D}(0,2)$. By Lemma \ref{JSubseq} we must in fact have that 
$U \subset \Fm$, the Fatou set at time $m$. A similar argument as in the in the proof of \emph{2.} (which requires that $U$ be a subset of $\Fm$) then shows that $U \subset \interior \, \Hmt \subset \Hmt$ which then contradicts our assumption that $U$ was a bounded component of $\chat \setminus \Hmt$ (we remark that the main difference is that here we no longer have a point $z \in U$ which lies in $\partial \Hmt \subset \Hmt$, but we do have that $\partial U$ is adherent to $\Hmt$ and using \emph{2.} of Lemma \ref{HmtInvariance1} this is sufficient to ensure that at any time $n \ge m$, $\overline {Q_{m,n}(U)} \cap \Hnt \ne \emptyset$ so that the argument still goes through).

Thus, $\chat \setminus \Hmt$ consists of a single unbounded component and is thus open and connected. Since from above $\Hmt$ is connected, by Theorem 2.2 on page 202 of \cite{Con}, $\chat \setminus \Hmt$ is simply connected whence by 4.3 on page 144 of \cite{New}, $\partial \Hmt$ is connected and therefore trivially uniformly perfect and so we have proved Claim \ref{Hmt}.
\end{claimproof}

We are now in a position to prove \emph{3.} of the statement of Theorem \ref{MainTh1} and also, eventually, the remaining parts. For $m \ge 0$, if $k_0$ is as small as possible so that $m \le M_{k_0}$, we define

\begin{equation}
\label{HmDef1}
\Hm  = \bigcup_{k=k_0}^\infty{Q_{m,M_k}^{-1}(\partial \tilde{\mathcal H}_{M_k})} = \bigcup_{n \ge m} Q_{m,n}^{-1}(\partial \Hnt),
\end{equation}

the second equality here following from \emph{3.} of Claim \ref{Hmt} since if $n < M_k$, then $\partial \Hnt \subset Q_{n, M_k}^{-1}(Q_{n, M_k}(\partial \Hnt)) = Q_{n, M_k}^{-1}(\partial \tilde {\mathcal H}_{M_k})$. We draw the reader's attention to the fact that we are taking preimages of the boundaries $\partial \Hnt$ rather than the sets $\Hnt$ themselves, the reason for this clearly being that we want $\Hm$ to be a subset of the iterated Julia set $\Jm$. Also, by \emph{1.} of Lemma \ref{HmtInvariance1}, we then immediately have that $\Hm \neq \emptyset$ for each $m \ge 0$.  

For $m \ge 0$ and $n > m$, it follows from \emph{4.} of Claim \ref{Hmt} above and the fact that $Q_{m,n}$ is continuous that $Q_{m,n}^{-1}{(\partial \Hnt)}$ is compact and connected and thus closed and uniformly perfect. Moreover, this set is also a   
subset of the Julia set $\Jm$ in view of Theorem \ref{ThmCompInvar} and \emph{2.} of the same claim. Lastly, one checks easily that 
the sets $Q_{m,n}^{-1}(\partial \Hnt)$ are increasing in $n$ by \emph{3.} of Claim \ref{Hmt} above so that $\Hm$ is an $\Fsigma$ subset of $\Jm$ which from above is an increasing union of closed uniformly perfect sets. 

To show \emph{3.}, it only remains to show $\Hm$ is dense in $\Jm$. First, we prove the following claim. 

\begin{claim}
\label{Hm1}
$\Hm \subset \Jm$ and is precisely the set of points $z \in \Jm$ for which $Q_{m, M_k-1}(z) \in H^k_{M_k -1}$ for all but finitely many $k$.
\end{claim}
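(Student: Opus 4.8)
The inclusion $\Hm\subset\Jm$ has already been observed in the discussion preceding the claim, since each $Q_{m,n}^{-1}(\partial\Hnt)\subset\Jm$ by Theorem~\ref{ThmCompInvar} and part \emph{2.} of Claim~\ref{Hmt}; thus the plan is to identify exactly which points of $\Jm$ lie in $\Hm$, using the description $\Hm=\bigcup_{k\ge k_0}Q_{m,M_k}^{-1}(\partial\tilde{\mathcal H}_{M_k})$ from \eqref{HmDef1}. I would establish the two inclusions separately.

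For the forward inclusion, suppose $z\in\Hm$, so $Q_{m,M_k}(z)\in\partial\tilde{\mathcal H}_{M_k}\subset\tilde{\mathcal H}_{M_k}$ for some $k$. Combining the intersection description $\tilde{\mathcal H}_{M_k}=\bigcap_{j\ge k}H^j_{M_k}$ from part \emph{1.} of Lemma~\ref{HmtInvariance1} with the forward invariance $Q_{M_k,M_j-1}(H^j_{M_k})=H^j_{M_j-1}$ (part \emph{2.} of Lemma~\ref{GmkHmkDecr}, iterated), I would conclude, for each $j>k$, that $Q_{m,M_j-1}(z)=Q_{M_k,M_j-1}(Q_{m,M_k}(z))\in H^j_{M_j-1}$. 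Hence the stated condition holds for every $j>k$, i.e.\ for all but finitely many $k$.

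For the reverse inclusion, I would begin with $z\in\Jm$ satisfying $Q_{m,M_k-1}(z)\in H^k_{M_k-1}$ for all $k\ge K$, enlarging $K$ if necessary so that $M_K>m$ and $K\ge k_0$. Applying $P_{M_K}$ gives $Q_{m,M_K}(z)\in H^K_{M_K}=\overline{\mathrm D}(0,2)$, and the crux is to show $Q_{m,M_K}(z)\in H^j_{M_K}$ for every $j>K$. I would prove this by a \emph{downward} induction on $n$ from $n=M_j-1$ down to $n=M_K$, showing $Q_{m,n}(z)\in H^j_n$: the inductive step is read off the recursion for $H^j_n$ in Definition~\ref{GmkHmkDef}, the only point needing attention being that in the exceptional case $n=M_l-1$ with $l<j$ one also needs $Q_{m,M_l-1}(z)\in H^l_{M_l-1}$, which holds because $M_l-1\ge M_K$ forces $l>K$ so the hypothesis on $z$ applies. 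This yields $Q_{m,M_K}(z)\in\bigcap_{j\ge K}H^j_{M_K}=\tilde{\mathcal H}_{M_K}$. Finally, since $z\in\Jm$ we have $Q_{m,M_K}(z)\in{\mathcal J}_{M_K}$ by Theorem~\ref{ThmCompInvar}, while $\interior\tilde{\mathcal H}_{M_K}\subset{\mathcal F}_{M_K}$ by part \emph{1.} of Claim~\ref{Hmt}, so $Q_{m,M_K}(z)$ lies in $\tilde{\mathcal H}_{M_K}$ but not in its interior, i.e.\ in $\partial\tilde{\mathcal H}_{M_K}$; therefore $z\in Q_{m,M_K}^{-1}(\partial\tilde{\mathcal H}_{M_K})\subseteq\Hm$.

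I expect the main obstacle to be the bookkeeping in the downward induction for the reverse inclusion: one must match the two cases of Definition~\ref{GmkHmkDef} against the range $M_K\le n\le M_j-1$ and confirm that every exceptional time $M_l-1$ encountered there satisfies $l>K$, so that the hypothesis on $z$ can be invoked. Pulling the itinerary condition back to the single time $M_K$ — rather than chasing the boundaries $\partial\tilde{\mathcal H}_n$ at intermediate times — is what keeps this clean; the remaining ingredients, namely forward invariance of the $H^k_m$ and the interior/boundary dichotomy supplied by Claim~\ref{Hmt}, are then routine.
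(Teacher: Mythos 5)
Your argument is correct and structurally mirrors the paper's proof. The inclusion $\Hm\subset\Jm$ and the forward direction are handled the same way (forward invariance of $\tilde{\mathcal H}$, or equivalently of the nested $H^j$ via part \emph{2.} of Lemma~\ref{GmkHmkDecr}). For the reverse direction, both proofs pull the itinerary condition back to the single time $M_{k_0}$ (your $M_K$), conclude $Q_{m,M_{k_0}}(z)\in\tilde{\mathcal H}_{M_{k_0}}$, and then place that image on $\partial\tilde{\mathcal H}_{M_{k_0}}$ by combining $Q_{m,M_{k_0}}(z)\in{\mathcal J}_{M_{k_0}}$ (Theorem~\ref{ThmCompInvar}) with $\interior\,\tilde{\mathcal H}_{M_{k_0}}\subset{\mathcal F}_{M_{k_0}}$ (part \emph{1.} of Claim~\ref{Hmt}). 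The one substantive deviation is how you obtain $Q_{m,M_{k_0}}(z)\in H^j_{M_{k_0}}$ for every $j>k_0$: the paper appeals to the component-counting observation embedded in the proof of part \emph{1.} of Claim~\ref{Hmt} (there is a unique component of $Q_{M_{k_0},M_j-1}^{-1}(H^j_{M_j-1})$ lying inside $H^{k_0}_{M_{k_0}}$, namely $H^j_{M_{k_0}}$), whereas you run a self-contained downward induction on $n$ from $M_j-1$ to $M_K$, reading the inductive step directly off the recursion in Definition~\ref{GmkHmkDef} and feeding in the hypothesis on $z$ at each exceptional time $n=M_l-1$, where $M_l-1\ge M_K$ forces $l>K$ so the hypothesis indeed applies. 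Your variant is marginally more elementary and avoids re-using a sub-argument buried in an earlier proof; the paper's is shorter once Claim~\ref{Hmt} is already in hand. Both routes are valid.
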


\begin{claimproof} (of Claim \ref{Hm1})
That $\Hm \subset \Jm$ follows by part \emph{2.} of Claim \ref{Hmt} above, combined with complete invariance (Theorem \ref{ThmCompInvar}). 

Suppose now that $z \in \Hm$ so that $Q_{m,n}(z) \in \partial \Hnt \subset \Hnt$ for some $n \ge m$. By part \emph{2.} of Lemma \ref{HmtInvariance1} together with the definition of the sets $\Hmt$ (Definition \ref{HmtDef}) given before that lemma, if $k_0$ is the smallest integer such that $M_{k_0} -1 \ge n$, then 
$Q_{m,M_k - 1}(z) \in \tilde {\mathcal H}_{M_k-1} \subset H^k_{M_k-1}$ for every $k \ge k_0$. 

Conversely, if $z \in \Jm$ is such that 
$Q_{m,M_k - 1}(z) \in H^k_{M_k-1}$ for all but finitely many $k$, then there is an integer $k_0$ with $M_{k_0} - 1 \ge m$ such that  $Q_{m,M_k - 1}(z) \in H^k_{M_k-1}$ for all $k \ge k_0$. Hence, at time $M_{k_0}$, $Q_{m, M_{k_0}}(z)$ belongs to a component of $Q_{M_{k_0},M_k-1}^{-1}(H^k_{M_k -1})$ for each $k > k_0$. However, we saw earlier in the proof of part \emph{1.} of Claim \ref{Hmt} that there is only one component of this preimage which lies inside $H^{k_0}_{M_{k_0}}$, namely $H^k_{M_{k_0}}$.  We then have by \emph{1.} of Lemma  \ref{HmtInvariance1}
that $Q_{m, M_{k_0}}(z) \in \tilde {\mathcal H}_{M_{k_0}}$ so that $z \in Q_{m,M_{k_0}}^{-1}( \tilde {\mathcal H}_{M_{k_0}})$. However, since $z \in \Jm$, by Theorem \ref{ThmCompInvar}, \emph{1.} of  Claim \ref{Hmt}, and \eqref{HmDef1}, we must also have that $z \in Q_{m,M_{k_0}}^{-1}( \partial \tilde {\mathcal H}_{M_{k_0}}) \subset \Hm$ which finishes the proof of Claim \ref{Hm1}. 
\end{claimproof}

From the above claim, it is easy to see that, if $z \in \Hm$, then so is any point in $Q_{m,n}^{-1}(Q_{m,n}(z))$ for any fixed  $n \ge m$. Also, it follows from condition \eqref{Invariance2} that the sequence  $\{Q_{M_{k-1}, M_k}\}_{k=1}^\infty$ is of the type in B\"uger's paper \cite{Bug} where we can take $U = \chat \setminus \overline {\mathrm D}(0,2)$ as our invariant neighbourhood of infinity. We can then apply \emph{1.} of B\"uger's result (Theorem \ref{SelfSimilarity}) combined with Lemma \ref{JSubseq} to conclude that $\Hm$ is dense in $\Jm$ so that \emph{3.} in the statement of  Theorem \ref{MainTh1} follows. 

We now proceed to prove \emph{5.} of the statement of Theorem \ref{MainTh1}. By \emph{1.} and \emph{2.} of Claim \ref{Hmt}, $\Hmt$ contains at least one bounded Fatou component in its interior. If $n > m$, then by \emph{1.}, \emph{2.}, and \emph{3.} of Claim \ref{Hmt}
\begin{equation}
\label{HmtInvariance2}
Q_{m,n} (\partial \Hmt) = \partial \Hnt \subset \Jn,  \qquad Q_{m,n}(\interior \,\Hmt) = \interior \,\Hnt \subset \Fn.
\end{equation}

By definition of the sequence $\Pm$, the critical value of $P_m$ is either the critical value $a_k$ of $P_{M_k -1}$ if $m = M_k-1$ for some $k \ge 1$, the critical value $b_k$ of $P_{M_k}$  if $m = M_k$ for some $k \ge 1$, or the critical value $0$ of $P_m= z^2$ at all other remaining times $m \ge 1$. Dealing first with the critical values $b_k$, recall that, since we had assumed in the discussion before the definitions of the sets $G^k_m$, $H^k_m$ (Definition \ref{GmkHmkDef}) that $b_k < -6$, while at the times $M_k$, $k \ge 1$, we have (using Definitions \ref{GmkHmkDef}, \ref{HmtDef}) that $ \tilde {\mathcal H}_{M_k} \subset H^k_{M_k} = \overline {\mathrm D}(0,2)$, we must have $b_k \notin \tilde {\mathcal H}_{M_k}$. On the other hand, in view of \emph{1.} of Claim \ref{Hmt}, $a_k \in \tilde {\mathcal H}_m$ if $m = M_k -1$ for some $k$ while $0 \in \tilde {\mathcal H}_m$ for all other remaining values of $m$ which are not equal to either $M_k -1$ or $M_k$ for some value of $k$.

Combining all of this, we see that at any given time $m \ge 1$, we have one of the following three possibilities:

\begin{enumerate}
\item If $m \ne M_k$ and $m \ne M_k -1$ for any $k \ge 1$, the critical value $0$ of $P_m$ is in $\Hmt$ and is the image under $P_m$ of the (single) critical point $0$ of $P_m$ and this critical point lies in $\tilde {\mathcal H}_{m-1}$,

\item If $m = M_k -1$ for some $k \ge 1$, the critical value $a_k$ of $P_m$ is in $\Hmt$ and is the image under $P_m$ of the (single) critical point $0$ of $P_m$ and this critical point lies in $\tilde {\mathcal H}_{m-1}$,

\item If $m = M_k$ for some $k \ge 1$, the critical value $b_k$ of $P_m$ satisfies $|b_k| > 6$ and does not meet $\Hmt \subset \overline {\mathrm D}(0,2)$ and consequently the critical point $0$ of $P_m$ does not meet $\tilde {\mathcal H}_{m-1}$.
\end{enumerate}

Recall that by \emph{4.} of Claim \ref{Hmt} $\tilde {\mathcal H}_{M_k}$ is connected while by by \emph{2.} of Lemma \ref{HmtInvariance1} $Q_{m,M_k}^{-1}(Q_{m,M_k}(\Hmt)) = Q_{m,M_k}^{-1}(\tilde {\mathcal H}_{M_k})$. Note also that from above $\Hmt$ contains the critical value of $P_m$ except at the times $M_k$ when there are two univalent branches of $P_{M_k}^{-1}$ defined on neighbourhood of the closed disc $\overline {\mathrm D}(0,2)$ , one mapping $\overline {\mathrm D}(0,2)$ to $G^k_{M_k-1}$ and one mapping the same disc to $H^k_{M_k-1}$ (using Definition \ref{GmkHmkDef}).

Thus, by repeatedly taking preimages one (quadratic) polynomial at a time,  
we see that, for each $k_1$ with $M_{k_1} > m$, one of the components of $Q_{m,M_{k_1}}^{-1}(Q_{m,M_{k_1}}(\Hmt))$ is $\Hmt$. For each of the other components of $Q_{m,M_{k_1}}^{-1}(Q_{m,M_{k_1}}(\Hmt))$, there is some $k_0 \le k_1$ with $m < M_{k_0}$ for which this component is the image of $\tilde {\mathcal H}_{M_{k_0}}$ under a univalent branch of $Q_{m,M_{k_0}}^{-1}$ defined on a neighbourhood of the set $\tilde{\mathcal H}_{M_{k_0}}$.

Note that, by \emph{4.} of Claim \ref{Hmt}, $\chat \setminus \tilde{\mathcal H}_{M_{k_0}}$ is simply connected which allows us by a routine argument using a Riemann map to ensure that this neighbourhood of $\tilde{\mathcal H}_{M_{k_0}}$ is simply connected so that the univalent branches of $Q_{m,M_{k_0}}^{-1}$ are well defined in view of the monodromy theorem. We summarize all of this below.

\vspace{.2cm}
\begin{claim}
\label{InverseImageComponents}
For each $m \ge 0$ and each $k_1\ge 1$ with $M_{k_1} > m$, each component of $Q_{m,M_{k_1}}^{-1}(Q_{m,M_{k_1}}(\Hmt)) = Q_{m,M_{k_1}}^{-1}(\tilde{\mathcal H}_{M_{k_1}})$ is either $\Hmt$ or is the image of 
$\tilde{\mathcal H}_{M_{k_0}}$ under a well-defined univalent branch of $Q_{m,M_{k_0}}^{-1}$ defined on a simply connected neighbourhood of the set $\tilde{\mathcal H}_{M_{k_0}}$ for some $k_0 \le k_1$ with $m < M_{k_0}$.
\end{claim}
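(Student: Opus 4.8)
The plan is to analyse the tree of preimage components obtained by pulling $\tilde{\mathcal H}_{M_{k_1}}$ back one quadratic map at a time through $P_{M_{k_1}}, P_{M_{k_1}-1}, \dots, P_{m+1}$, tracking at each time $\ell$ with $m\le \ell\le M_{k_1}$ the (finitely many) connected components of $Q_{\ell,M_{k_1}}^{-1}(\tilde{\mathcal H}_{M_{k_1}})$. First note $Q_{m,M_{k_1}}^{-1}(Q_{m,M_{k_1}}(\Hmt)) = Q_{m,M_{k_1}}^{-1}(\tilde{\mathcal H}_{M_{k_1}})$ follows from iterating \emph{2.} of Lemma \ref{HmtInvariance1}. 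I would then isolate the \emph{main branch}: by \emph{3.} of Lemma \ref{HmtInvariance1}, at a time $\ell\neq M_k-1$ one has $P_{\ell+1}^{-1}(\tilde{\mathcal H}_{\ell+1}) = \tilde{\mathcal H}_\ell$ (no splitting), while at a time $\ell = M_k-1$ the set $P_{M_k}^{-1}(\tilde{\mathcal H}_{M_k})$ is $\tilde{\mathcal H}_{M_k-1}\subset H^k_{M_k-1}$ together with one further component $\psi_{M_k}(\tilde{\mathcal H}_{M_k})\subset G^k_{M_k-1}$, where $\psi_{M_k}$ is the branch of $P_{M_k}^{-1}$, univalent on ${\mathrm D}(0,3)$, mapping $\overline{\mathrm D}(0,2)$ onto $G^k_{M_k-1}$ (such a branch exists since $|b_k|>6$). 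As $H^k_{M_k-1}\cap G^k_{M_k-1}=\emptyset$, the set $\tilde{\mathcal H}_\ell$ is a genuine component at every time; in particular $\tilde{\mathcal H}_m$ is one of the components of $Q_{m,M_{k_1}}^{-1}(\tilde{\mathcal H}_{M_{k_1}})$ and no other component meets it. Every remaining component is therefore \emph{born} at some time $M_{k_0}-1$ with $m<M_{k_0}\le M_{k_1}$, inside $G^{k_0}_{M_{k_0}-1}$, as $\psi_{M_{k_0}}(\tilde{\mathcal H}_{M_{k_0}})$ or a later pullback thereof.

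The heart of the matter, and the step I expect to be the main obstacle, is showing that once born these \emph{extra} components are pulled back only through univalent branches. I would prove by downward induction on $\ell$ that every non-main component $E_\ell$ of $Q_{\ell,M_{k_1}}^{-1}(\tilde{\mathcal H}_{M_{k_1}})$ is disjoint from $\tilde{\mathcal H}_\ell$: the base case ($\ell = M_{k_0}-1$) is the disjointness of $G^{k_0}_{M_{k_0}-1}$ and $H^{k_0}_{M_{k_0}-1}\supset \tilde{\mathcal H}_{M_{k_0}-1}$, and the inductive step uses $E_\ell\subseteq P_{\ell+1}^{-1}(E_{\ell+1})$, $\tilde{\mathcal H}_\ell\subseteq P_{\ell+1}^{-1}(\tilde{\mathcal H}_{\ell+1})$ (by \emph{3.} of Lemma \ref{HmtInvariance1}) and $E_{\ell+1}\cap\tilde{\mathcal H}_{\ell+1}=\emptyset$. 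Granting this, at each pullback step $\ell\to\ell-1$ (for $m<\ell\le M_{k_0}$) the component being pulled back misses the critical value of $P_\ell$: if $\ell\neq M_k$ this value is $0$ (when $\ell\neq M_k-1$) or $a_k$ (when $\ell=M_k-1$), and either way it lies in $\tilde{\mathcal H}_\ell$ by \emph{1.} of Claim \ref{Hmt}, hence outside $E_\ell$; if $\ell = M_k$ for some $k<k_0$ the critical value is $b_k$ with $|b_k|>6$, whereas $E_\ell\subseteq Q_{M_k,M_{k_1}}^{-1}(\overline{\mathrm D}(0,2))\subseteq\overline{\mathrm D}(0,2)$ by repeated application of \eqref{Invariance3}; the step $\ell=M_{k_0}$ is the birth step handled above. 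This is exactly where the design of the sequence (the placement $a_k=-\sqrt{-b_k}$, forcing $0$ resp.\ $a_k$ into $\tilde{\mathcal H}_\ell$, together with the backward invariance \eqref{Invariance3} pulling the extra components back into $\overline{\mathrm D}(0,2)$ at the times $M_k$) is used.

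Consequently each of these pullbacks is realized by a branch $\psi_\ell$ of $P_\ell^{-1}$ that is holomorphic and univalent on a neighbourhood of $E_\ell$, and composing them shows that every extra component of $Q_{m,M_{k_1}}^{-1}(\tilde{\mathcal H}_{M_{k_1}})$ is the image of $\tilde{\mathcal H}_{M_{k_0}}$ under a branch $g = \psi_{m+1}\circ\cdots\circ\psi_{M_{k_0}}$ of $Q_{m,M_{k_0}}^{-1}$ that is holomorphic and univalent on some open neighbourhood $V\supset\tilde{\mathcal H}_{M_{k_0}}$.

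It remains to replace $V$ by a simply connected neighbourhood. By \emph{4.} of Claim \ref{Hmt} the set $\chat\setminus\tilde{\mathcal H}_{M_{k_0}}$ is simply connected, and $\tilde{\mathcal H}_{M_{k_0}}$ is a non-degenerate continuum (connected, and by \emph{1.} of Claim \ref{Hmt} with nonempty interior), so a Riemann map $\psi$ of $\chat\setminus\tilde{\mathcal H}_{M_{k_0}}$ onto the exterior of the unit disc yields, for $r>1$, the open sets $N_r = \chat\setminus\psi^{-1}(\{|w|\ge r\}\cup\{\infty\})$: each $N_r$ is a simply connected neighbourhood of $\tilde{\mathcal H}_{M_{k_0}}$ (its complement is compact and connected), these sets shrink down to $\tilde{\mathcal H}_{M_{k_0}}$, and a routine compactness argument gives $N_r\subseteq V$ for $r$ close to $1$; then $g|_{N_r}$ is the desired well-defined univalent branch of $Q_{m,M_{k_0}}^{-1}$. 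Combined with the fact that the main component equals $\tilde{\mathcal H}_m$, this exhausts all components and proves the claim; besides the critical-value avoidance step, the only other delicate point is this routine passage from $V$ to a simply connected neighbourhood.
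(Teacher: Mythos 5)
Your proposal follows essentially the same route as the paper: using \emph{3.} of Lemma \ref{HmtInvariance1} to track the ``main branch'' $\tilde{\mathcal H}_\ell$, noting that extra components split off precisely at the times $M_{k_0}-1$ into $G^{k_0}_{M_{k_0}-1}$, showing by the critical-value bookkeeping from \emph{1.} of Claim \ref{Hmt} (and the location of $b_k$) that the subsequent pullbacks are univalent, and finishing with the Riemann-map argument from \emph{4.} of Claim \ref{Hmt} to produce a simply connected neighbourhood. The one point you leave implicit — as does the paper — is that knowing the critical value $c$ of $P_\ell$ misses $E_\ell$ pointwise is not by itself enough for $P_\ell^{-1}(E_\ell)$ to consist of two univalent copies of $E_\ell$: if $c$ lay in a bounded complementary component of $E_\ell$ the preimage would be a single connected set and there would be no single-valued branch. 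This is resolved by observing that $E_\ell$ is a full continuum (no bounded complementary components): $\tilde{\mathcal H}_{M_{k_1}}$ is full by \emph{4.} of Claim \ref{Hmt}, preimages of full compacta under polynomials are full by the maximum principle, and each component of a full compactum is itself full — so $c \notin E_\ell$ automatically places $c$ in the unbounded complementary component. With that observation added, your downward-induction disjointness argument and the step-by-step univalent branch construction go through, and they reproduce what the paper sketches more briefly.
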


Recall that by \emph{1.} of Claim \ref{Hmt} $\interior \,\Hmt$ is non-empty and contained in the Fatou set $\Fm$. 
If we now let $U$ be one of these components of $\interior \,\Hmt$, then $U$ is contained in $\Hmt$ and thus enclosed (using \emph{2.} of Claim \ref{Hmt}) by $\partial \Hmt \subset \Jm$. $U$ is therefore a bounded component of the Fatou set $\Fm$ and by Theorem \ref{ThmCompInvar}, if $k \ge 1$ with $m < M_k$,  then $Q_{m,M_k}(U)$ is then a bounded Fatou component (of the Fatou set ${\mathcal F}_{M_k}$ at time $M_k$). By \emph{1.} and \emph{2.} of Claim \ref{Hmt} and the maximality of Fatou components, $Q_{m,M_k}(U)$ is also a component of $\interior \, \tilde {\mathcal H}_{M_k}$ which using \eqref{HmtInvariance2} is then enclosed by $Q_{m,M_k}(\partial \Hmt) = \partial \tilde {\mathcal H}_{M_k} \subset {\mathcal J}_{M_k}$. 

 Recall Claim \ref{InverseImageComponents} above which states that each $k_1\ge 1$ with $M_{k_1} > m$ each component of $Q_{m,M_{k_1}}^{-1}(Q_{m,M_{k_1}}(\Hmt)) = Q_{m,M_{k_1}}^{-1}(\tilde{\mathcal H}_{M_{k_1}})$ is either $\Hmt$ or is the image of 
$\tilde{\mathcal H}_{M_{k_0}}$ under a well-defined univalent branch of $Q_{m,M_{k_0}}^{-1}$ defined on a simply connected neighbourhood of the set $\tilde{\mathcal H}_{M_{k_0}}$ for some $k_0 \le k_1$ with $m < M_{k_0}$. 
Recall that by \emph{4.} of Claim \ref{Hmt} that $\partial \tilde {\mathcal H}_{M_{k_1}}$ is connected. It then follows from 
\emph{3.} of this same claim that each component of $Q_{m,M_{k_1}}^{-1}(Q_{m,M_{k_1}}(\partial \Hmt)) = Q_{m,M_{k_1}}^{-1}(\partial \tilde {\mathcal H}_{M_{k_1}})$ is either $\partial \Hmt$ or the image of $\partial \tilde{\mathcal H}_{M_{k_0}}$ under a well-defined univalent branch of $Q_{m,M_{k_0}}^{-1}$ defined on a simply connected neighbourhood of the set $\tilde{\mathcal H}_{M_{k_0}}$ for some $k_0 \le k_1$ with $m < M_{k_0}$.

By Theorem \ref{ThmCompInvar} and Claim \ref{InverseImageComponents} it then follows from the two paragraphs above that each of the components of $Q_{m,M_{k_1}}^{-1}(Q_{m,M_{k_1}}(\partial \Hmt))$ then encloses a bounded Fatou component which is either $U$ or the image of  the iterated component $Q_{m, M_{k_0}}(U)$ under a well-defined univalent branch of $Q_{m,M_{k_0}}^{-1}$ defined on a simply connected neighbourhood of the set $\tilde{\mathcal H}_{M_{k_0}}$ for some $k_0 \le k_1$ with $m < M_{k_0}$. In addition, as $\partial \Hmt$  and each $\partial \tilde {\mathcal H}_{M_{k_0}}$ are connected in view of \emph{4.} of Claim \ref{Hmt},  each component of $Q_{m,M_{k_1}}^{-1}(\partial \tilde {\mathcal H}_{M_{k_1}})$ is trivially uniformly perfect. 

To finish showing \emph{5.} of Theorem \ref{MainTh1}, we need to show that the number of components of $Q_{m,M_k}^{-1}(\partial \tilde {\mathcal H}_{M_k})$ tends to infinity as $k \to \infty$ and that these components belong to different components of the iterated Julia set $\Jm$ (the issue one needs to avoid here is that the distinct components of $Q_{m,M_k}^{-1}(Q_{m,M_k}(\partial \Hmt))$ could `join up' in the limit as $k$ tends to infinity). To see this, note that, by \emph{3.} of Claim \ref{Hmt} if $m \le M_k$, then $Q_{M_k, M_{k+1}}^{-1}(Q_{m,M_{k+1}}(\partial \Hmt)) = Q_{M_k, M_{k+1}}^{-1}(\partial \tilde {\mathcal H}_{M_{k+1}})$. By a similar discussion to the one used in the proof of \emph{1.} of Claim \ref{Hmt}, using \emph{5.} of this same claim, this set then consists of $2^{m_{k+1}} + 1$ components. Since $|b_k| \to \infty$ as $k \to \infty$, it follows from \eqref{mkbound1} or \eqref{mkbound2} that $m_k \to \infty$ as $k \to \infty$ and so therefore does the number of these components.

To finish the proof of \emph{5.} of Theorem \ref{MainTh1} we make the following definition for a collection of conformal annuli which we will use to ensure the components of $Q_{m,M_k}^{-1}(\partial \tilde {\mathcal H}_{M_k})$ remain distinct as $k$ tends to infinity as well ensuring pointwise thinness of the sets $\Gm$ (\emph{2.} of Theorem \ref{MainTh1}). 

\vspace{.2cm}
\begin{definition}
\label{AkDef}
For each $k \ge 1$ we define the round annulus $A_k$ at time $M_k -1$ as follows:
\vspace{.1cm}
$$A_k = {\mathrm A}(\sqrt{-b_k}, r_k, 2\sqrt{-b_k} - r_k) = {\mathrm A}(-a_k, r_k, 2\sqrt{-b_k} - r_k).$$
 \end{definition}

We remind the reader that $r_k = \sqrt{-b_k} - \sqrt{-b_k -2}$ was introduced in Lemma \ref{rksk} so that we then have $(2\sqrt{-b_k} - r_k) - r_k > 0$ so that $A_k$ is in particular non-empty. Then, again by \emph{1.} of this lemma and Definition \ref{GmkHmkDef}, 

\vspace{-.2cm}
\begin{equation}
\label{InverseImageofDisc}
(P_{M_k})^{-1}(\overline {\mathrm D}(0,2)) = (G_{M_k-1}^k \cup H_{M_k-1}^k) \subset (\overline {\mathrm D}(-\sqrt{-b_k},r_k) \cup \overline {\mathrm D}(\sqrt{-b_k},r_k)). 
\end{equation}

$A_k$ thus contains $G^k_{M_k-1}$ and $H^k_{M_k-1}$ in its bounded and unbounded complementary components respectively, so that it separates these sets. 
Also, since $(P_{M_k})^{-1}(\overline {\mathrm D}(0,2)) = (G_{M_k-1}^k \cup H_{M_k-1}^k)$, in view of \eqref{Invariance1}, \eqref{Invariance2} and Lemma \ref{JSubseq}, we know that this annulus is contained in ${\mathcal A}_{\infty, M_k -1}$, the iterated basin of infinity at time $M_k -1$.

Recall from above that, if $m \ge 0$ is such that $M_{k-1} \le m < M_k -1$ (remember that $M_0=0$ by definition), then $Q_{m, M_k -1}$ maps the critical point $0$ of $P_{m+1}$ to $a_k = - \sqrt{-b_k}$ while $P_{M_k}$ maps $a_k$ to $0$. Also, as remarked near the start of the proof, each $Q_{M_{k-1}, M_k}$ fixes $0$ so that $Q_{M_k-1, m}(a_k)$ is either $a_j$ if $m = M_j -1$ for some $j > k$ or $0$ otherwise. It follows (recalling that we have set $Q_{m,m} = Id$) that 
the critical values of $Q_m$ for $m \ge 1$ are either images of the critical values $a_k$ of $P_{M_k -1}$ for each $k$ with $M_k - 1 \le m$ or of the critical values $b_k$ of $P_{M_k}$ for some $k$ with $M_k \le m$. 

On the other hand, 
since for each $k \ge 1$ $|b_k| > 6 > 4$, by \eqref{Invariance1},  \eqref{Invariance2} (using the points $a_j$, $b_j$ for suitable $j > k$), it follows that, for each $m > M_k$, $|Q_{M_k, m}(b_k)| > 4$. It then follows from this and the above using \eqref{mkbound2} that, for each $k \ge 1$, the critical values of $Q_{M_k -1} = Q_{0, M_k -1}$ (including $a_k$) lie outside the disc ${\mathrm D}(\sqrt{-b_k}, 2\sqrt{-b_k})$ (note that in view of \eqref{mkbound2} we have that $4^{m_k} > 2^{m_k} > 12\sqrt{-b_k} > 3\sqrt{-b_k}$). This in turn implies that, for each $0 \le m \le M_k -1$,  each of the polynomials $Q_{m, M_k -1}$ has a full set of $2^{M_k - m - 1}$ inverse branches defined on this disc. In addition, if $M_{k-1} \le m \le M_k -1$, then these preimages differ by rotations of $\tfrac{2\pi}{2^{M_k - m - 1}}$. We summarize what we have just proved in the following claim.

\begin{claim}
\label{InverseBranches}
For each $k \ge 1$ and $m \le M_k - 1$, the polynomial composition $Q_{m, M_k -1}$ has a full set of $2^{M_k - m - 1}$ univalent inverse branches defined on the disc ${\mathrm D}(\sqrt{-b_k}, 2\sqrt{-b_k})$ and thus on the annulus $A_k = {\mathrm A}(\sqrt{-b_k}, r_k, 2\sqrt{-b_k} - r_k)$.
\end{claim}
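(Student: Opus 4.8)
The plan is to read Claim~\ref{InverseBranches} off, essentially verbatim, from the critical-value bookkeeping carried out in the paragraphs immediately preceding it, combined with the elementary fact that a polynomial, restricted over a simply connected region that omits all of its critical values, is a trivial covering and hence carries a full family of single-valued univalent inverse branches there.

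First I would reduce to the case $m=0$. As a self-map of $\chat$ the polynomial $Q_{0,m}$ is surjective, and $Q_{0,M_k-1}=Q_{m,M_k-1}\circ Q_{0,m}$; since the critical set of a composition $g\circ f$ consists of the critical set of $f$ together with the $f$-preimages of the critical set of $g$, surjectivity of $f$ forces every critical value of $g$ to be a critical value of $g\circ f$. Thus every finite critical value of $Q_{m,M_k-1}$ is a critical value of $Q_{0,M_k-1}$. Concretely, since each $P_i$ has its only critical point at $0$, the finite critical values of $Q_{m,M_k-1}$ are precisely the points $Q_{i,M_k-1}(0)$ with $m\le i\le M_k-2$, visibly a subset of those obtained when $m=0$.

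Hence it suffices to verify that the finite critical values of $Q_{0,M_k-1}$ lie outside the open disc ${\mathrm D}(\sqrt{-b_k},2\sqrt{-b_k})$. Following the orbit of $0$ through the blocks determined by \eqref{PmkDef}, one sees that these critical values are precisely $a_k$ itself, the forward images $Q_{M_j,M_k-1}(b_j)$ of the critical values $b_j$ for $1\le j<k$, and $\infty$. Now $a_k=-\sqrt{-b_k}$ lies on the bounding circle $\{\,|w-\sqrt{-b_k}|=2\sqrt{-b_k}\,\}$, hence outside the open disc; each $Q_{M_j,M_k-1}(b_j)$ has, by \eqref{Invariance1} and \eqref{Invariance2} together with $|b_j|>6$, modulus far exceeding $3\sqrt{-b_k}=\max_{w\in\overline{\mathrm D}(\sqrt{-b_k},2\sqrt{-b_k})}|w|$; and $\infty$ is trivially outside. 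This is exactly the estimate recorded just before the claim, where \eqref{mkbound2} enters through $2^{m_k}>12\sqrt{-b_k}>3\sqrt{-b_k}$.

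Granting this, ${\mathrm D}(\sqrt{-b_k},2\sqrt{-b_k})$ is a simply connected subdomain of $\chat$ over which $Q_{m,M_k-1}$ is unbranched, so $Q_{m,M_k-1}$ restricts to a covering of degree $D_{m,M_k-1}=2^{M_k-m-1}$ from $Q_{m,M_k-1}^{-1}({\mathrm D}(\sqrt{-b_k},2\sqrt{-b_k}))$ onto the disc; over a simply connected base this covering is trivial, so by the monodromy theorem there are exactly $2^{M_k-m-1}$ globally defined, single-valued, univalent inverse branches on the disc. Since $r_k>0$ (Lemmas~\ref{rksk},~\ref{rkskbounds}), the outer radius $2\sqrt{-b_k}-r_k$ of $A_k$ is strictly smaller than $2\sqrt{-b_k}$, so $\overline{A_k}\subset{\mathrm D}(\sqrt{-b_k},2\sqrt{-b_k})$ and each branch restricts to a univalent branch on $A_k$; when moreover $M_{k-1}\le m\le M_k-1$ one has $Q_{m,M_k-1}(z)=z^{2^{M_k-m-1}}+a_k$, so these branches differ from one another by rotations through $\tfrac{2\pi}{2^{M_k-m-1}}$, as in the preceding discussion. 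The only part of the argument that is not purely formal is the verification that no finite critical value of $Q_{0,M_k-1}$ enters the open disc, and that has already been established in the text before the claim; the two small points to keep straight are that $a_k$ lands exactly on the boundary circle (so it is the open disc, and $\overline{A_k}$ strictly inside it, that matter) and that the $b_j$-images are expelled not by a mild bound but by the tower growth built into \eqref{Invariance1} and \eqref{Invariance2}.
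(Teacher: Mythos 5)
Your proposal is correct and takes essentially the same route as the paper: identify the critical values of $Q_{0,M_k-1}$ as $a_k$ together with the images $Q_{M_j,M_k-1}(b_j)$ for $j<k$, note that $a_k$ lands exactly on the boundary circle $\{|w-\sqrt{-b_k}|=2\sqrt{-b_k}\}$ while the escaping $b_j$-orbits together with \eqref{mkbound2} expel the remaining finite critical values from $\overline{\mathrm D}(\sqrt{-b_k},2\sqrt{-b_k})$, and then invoke triviality of coverings over a simply connected base. The only genuine addition you make is the explicit reduction to $m=0$ via surjectivity of $Q_{0,m}$, which the paper leaves implicit when it passes from the critical values of $Q_{0,M_k-1}$ to the existence of branches of $Q_{m,M_k-1}^{-1}$ for all $m\le M_k-1$; your version makes that step cleaner without changing the substance.
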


Using the claim above, we can make the following definition which will be useful to us in proving \emph{2.} and \emph{5.} of Theorem 1.10. 

\begin{definition}
\label{BkjmDef}
For each $0 \le m \le M_k -1$, and each $1 \le j \le 2^{M_k - m - 1}$, let
$B^{k,j}_m$, be the components of the inverse image of the annulus $A_k$ (defined in Definition \ref{AkDef}) under $Q_{m, M_k -1}$. Each of the sets $B^{k,j}_m$ is thus a conformal annulus of the same modulus as $A_k$.
\end{definition} 

Note that using Lemma \ref{rkskbounds} the modulus of $A_k$ is $\log \tfrac{ 2\sqrt{-b_k} - r_k}{r_k}  > \log (4\sqrt{-b_k} - 1)$ which then tends to infinity as $k \to \infty$ as $b_k \to - \infty$ as $k \to \infty$. In addition, since (as remarked above when we introduced the annuli $A_k$ in Definition \ref{AkDef}) we have $A_k \subset {\mathcal A}_{\infty, M_k -1}$, we must also have that $B^{k,j}_m \subset \Am$ for each $1 \le j \le 2^{M_k - m - 1}$ in view of Theorem \ref{ThmCompInvar}.

In view of the lower bound \eqref{mkbound1} on the integers $m_k$ given in Claim \ref{InvariantAnnuliDiscs} (or the stronger bound \eqref{mkbound2} in the proof of Claim \ref{Contracting1}), for each $k \ge 2$, one checks easily that the set $Q_{M_{k-1},M_k -1}^{-1}({\mathrm D}(\sqrt{-b_k}, 2\sqrt{-b_k}))$ must be contained in  $\overline {\mathrm D}(0, 2)$. If we then take one further preimage under $P_{M_{k-1}}$, it then follows (using Definition \ref{GmkHmkDef}) that 

\vspace{-.2cm}
\begin{equation}
\label{DisjointAnnuli1}
Q_{M_{k-1}-1,M_k -1}^{-1}({\mathrm D}(\sqrt{-b_k}, 2\sqrt{-b_k})) \subset G^{k-1}_{M_{k-1}-1} \cup H^{k-1}_{M_{k-1}-1}.
\end{equation}

We shall see later in Claim \ref{SeparatingAnnuliConfiguration} that this condition implies that for $k_0 \le k_1$ such that $m < M_{k_0} \le M_{k_1}$, the families of conformal annuli $B^{k,j}_m$, $k_0 \le k \le k_1$, $1 \le j \le 2^{M_k - m - 1}$ (as defined above in Definition \ref{BkjmDef}) will in fact be disjoint. It is worth noting here that, although these annuli are disjoint, they can be nested in the sense that one annulus $B^{k,j}_m$ may lie entirely in the bounded complementary component of another (which will correspond to a smaller value of $k$). 

Recall that, by Claim \ref{InverseImageComponents} for each $k_1$ with $M_{k_1} > m$, with the exception of $\Hmt$ itself, all the other components of $Q_{m,M_{k_1}}^{-1}(Q_{m,M_{k_1}}(\Hmt)) = Q_{m,M_{k_1}}^{-1}(\tilde {\mathcal H}_{M_{k_1}})$ 
are the images of $\tilde {\mathcal H}_{M_{k_0}}$ under a univalent branch of $Q_{m, M_{k_0}}^{-1}$ 
defined on a neighbourhood of the set $\tilde {\mathcal H}_{M_{k_0}}$ for some $k_0 \le k_1$ with $m < M_{k_0}$. If we then choose 
$k_1$ with $M_{k_1} > m$ and let $k_0$ be as small as possible with $M_{k_0} >m$,  
one can calculate that there will be precisely 

\[ \sum_{k=k_0}^{k_1}{2^{M_k - m - 1}}\]

 of these other components. 

To finish the proof of \emph{5.} of Theorem \ref{MainTh1}, we require another important claim, the proof of which will reveal much about the structure of the sets $\Hm$ and thus the iterated Julia sets $\Jm$. Recalling the definition of the sets $\Hmt$ as given in Definition \ref{HmtDef}, for each $0 \le m$ and each $k_1$ with $M_{k_1} > m$, again choose $k_0$ to be as small as possible so that $m < M_{k_0} \le  M_{k_1}$. The reader should note that we could possibly have $k_0 = k_1$, for example if $m = M_{k_1}-1$.

\begin{claim}
\label{JoiningTimes}
For $m$, $k_0$, $k_1$ as above and each component $K$ of the inverse image $Q_{m,M_{k_1}}^{-1}(\tilde {\mathcal H}_{M_{k_1}})$, then either $K = \Hmt$ or 
there exists $k_0 \le k \le k_1$ such that the following hold:

\begin{enumerate}
\item $M_k$ is the least integer $j$, $m \le j \le M_{k_1}$ for which we have $Q_{m,j}(K) \cap \tilde {\mathcal H}_j \ne \emptyset$,

\vspace{.2cm}
\item $Q_{m,M_k}(K) = \tilde {\mathcal H}_{M_k}$,

\vspace{.2cm}
\item $Q_{m,M_k-1}(K) \subset G_{M_k -1}^k$.

\end{enumerate}
\end{claim}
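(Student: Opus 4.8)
The plan is to pin down $M_k$ as the \emph{joining time} of $K$, namely the least $j$ with $m\le j\le M_{k_1}$ for which $Q_{m,j}(K)\cap\tilde{\mathcal H}_j\ne\emptyset$, and then to read off \emph{2.} and \emph{3.} from the elementary two-sheeted structure of $P_{M_k}=z^2+b_k$ at that time. As preliminaries I would note that, since $K$ is a connected component of the preimage $Q_{m,M_{k_1}}^{-1}(\tilde{\mathcal H}_{M_{k_1}})$ of the compact connected set $\tilde{\mathcal H}_{M_{k_1}}$ (connectedness from Claim \ref{Hmt}), properness of $Q_{m,M_{k_1}}$ forces $Q_{m,M_{k_1}}(K)=\tilde{\mathcal H}_{M_{k_1}}$; in particular the joining time $j_0$ is well defined with $j_0\le M_{k_1}$. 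Moreover, by Claim \ref{InverseImageComponents} the set $\tilde{\mathcal H}_m$ is itself one of the components of $Q_{m,M_{k_1}}^{-1}(\tilde{\mathcal H}_{M_{k_1}})$, so if $K\ne\tilde{\mathcal H}_m$ then $K\cap\tilde{\mathcal H}_m=\emptyset$, and hence $j_0>m$.

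Next I would show that $j_0$ is a special time. Suppose $j_0\ne M_l$ for every $l$. Then \emph{3.} of Lemma \ref{HmtInvariance1}, applied at $j_0-1$ (which is $\ge m$), gives $\tilde{\mathcal H}_{j_0-1}=P_{j_0}^{-1}(\tilde{\mathcal H}_{j_0})$, and since $Q_{m,j_0}=P_{j_0}\circ Q_{m,j_0-1}$ the trivial equivalence $P(A)\cap B\ne\emptyset\iff A\cap P^{-1}(B)\ne\emptyset$ yields
$$Q_{m,j_0}(K)\cap\tilde{\mathcal H}_{j_0}\ne\emptyset\iff Q_{m,j_0-1}(K)\cap P_{j_0}^{-1}(\tilde{\mathcal H}_{j_0})\ne\emptyset\iff Q_{m,j_0-1}(K)\cap\tilde{\mathcal H}_{j_0-1}\ne\emptyset,$$
contradicting minimality of $j_0$. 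Hence $j_0=M_k$ for some $k$, and from $m<M_k\le M_{k_1}$ together with minimality of $k_0$ we obtain $k_0\le k\le k_1$, which is \emph{1.} (The equivalence above is exactly what breaks down at a special time, since there Lemma \ref{HmtInvariance1}.3 only gives $\tilde{\mathcal H}_{M_k-1}=P_{M_k}^{-1}(\tilde{\mathcal H}_{M_k})\cap H^k_{M_k-1}$, a proper subset of $P_{M_k}^{-1}(\tilde{\mathcal H}_{M_k})$.)

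For \emph{2.} and \emph{3.}, set $Y:=Q_{m,M_k-1}(K)$; this is connected, satisfies $Y\cap\tilde{\mathcal H}_{M_k-1}=\emptyset$ by minimality of $j_0$, and has $P_{M_k}(Y)=Q_{m,M_k}(K)$ meeting $\tilde{\mathcal H}_{M_k}$. Because $|b_k|>6$, the two branches $\pm\sqrt{z-b_k}$ of $P_{M_k}^{-1}$ are univalent near $\overline{\mathrm D}(0,2)$ and carry it onto the disjoint sets $G^k_{M_k-1}$ and $H^k_{M_k-1}$ (Lemma \ref{rksk}); since $\tilde{\mathcal H}_{M_k}\subset\overline{\mathrm D}(0,2)$ avoids the critical value $b_k$, the preimage $P_{M_k}^{-1}(\tilde{\mathcal H}_{M_k})$ has exactly two connected components, one being $\tilde{\mathcal H}_{M_k-1}\subset H^k_{M_k-1}$ and the other a connected set $\tilde G^k_{M_k-1}:=P_{M_k}^{-1}(\tilde{\mathcal H}_{M_k})\cap G^k_{M_k-1}$. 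From $Y\cap\tilde{\mathcal H}_{M_k-1}=\emptyset$ and $Y\cap P_{M_k}^{-1}(\tilde{\mathcal H}_{M_k})\ne\emptyset$ we get $Y\cap\tilde G^k_{M_k-1}\ne\emptyset$. Applying the same reasoning over \emph{all} components of $Q_{M_k,M_{k_1}}^{-1}(\tilde{\mathcal H}_{M_{k_1}})$ — each lies in $\overline{\mathrm D}(0,2)$ by iterated use of \eqref{Invariance3}, each is compact and connected, and one of them is $\tilde{\mathcal H}_{M_k}$ by Claim \ref{InverseImageComponents} — decomposes $Q_{M_k-1,M_{k_1}}^{-1}(\tilde{\mathcal H}_{M_{k_1}})=P_{M_k}^{-1}\bigl(Q_{M_k,M_{k_1}}^{-1}(\tilde{\mathcal H}_{M_{k_1}})\bigr)$ into finitely many compact components, two lying over each component below, and in particular shows that $\tilde G^k_{M_k-1}$ is a full component of it. Since $Y$ is connected with $Q_{M_k-1,M_{k_1}}(Y)=\tilde{\mathcal H}_{M_{k_1}}$ and meets $\tilde G^k_{M_k-1}$, it follows that $Y\subset\tilde G^k_{M_k-1}\subset G^k_{M_k-1}$, which is \emph{3.} Finally, $Q_{m,M_k}(K)=P_{M_k}(Y)\subset P_{M_k}(\tilde G^k_{M_k-1})=\tilde{\mathcal H}_{M_k}$; on the other hand $Q_{m,M_k}(K)$, being the image of the component $K$ under $Q_{m,M_k}$, is itself one of the (finitely many, compact) components of $Q_{M_k,M_{k_1}}^{-1}(\tilde{\mathcal H}_{M_{k_1}})$, so being contained in the component $\tilde{\mathcal H}_{M_k}$ it must equal it, which is \emph{2.}

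I expect the main obstacle to be the book-keeping in the last paragraph: making it genuinely rigorous that the two sheets of $P_{M_k}^{-1}$ over $G^k_{M_k-1}$ and $H^k_{M_k-1}$ stay separate when pulled back along the whole tower $Q_{M_k,M_{k_1}}$, so that $\tilde G^k_{M_k-1}$ is a whole component (not a strict sub-piece) of $Q_{M_k-1,M_{k_1}}^{-1}(\tilde{\mathcal H}_{M_{k_1}})$ and $Y$ is forced inside it. The point that makes this clean is that $|b_k|>6$ pushes the critical value $b_k$ of $P_{M_k}$ out of $\overline{\mathrm D}(0,2)$, hence out of every component of $Q_{M_k,M_{k_1}}^{-1}(\tilde{\mathcal H}_{M_{k_1}})\subset\overline{\mathrm D}(0,2)$; the remainder is the standard partition of a finite-degree polynomial preimage into its components.
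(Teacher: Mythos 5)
Your proof is correct, and in two places it takes a genuinely different and more economical route than the paper's. First, to pin down the joining time $j_0$ as one of the $M_k$, the paper argues by a two-case contradiction (it separately treats $j_0>M_{k_0}$, pulling all the way back to the largest $M_k<j_0$ via iterated use of Lemma~\ref{HmtInvariance1}.3, and $m<j_0<M_{k_0}$), whereas you make the single one-step observation that whenever $j_0-1\neq M_l-1$ Lemma~\ref{HmtInvariance1}.3 gives $\tilde{\mathcal H}_{j_0-1}=P_{j_0}^{-1}(\tilde{\mathcal H}_{j_0})$ exactly, so $Q_{m,j_0-1}(K)\cap\tilde{\mathcal H}_{j_0-1}\neq\emptyset$, contradicting minimality; this isolates cleanly the fact that the failure of the equivalence at $j_0=M_k$ (where one intersects with $H^k_{M_k-1}$) is what makes nontrivial components possible. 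Second, and more substantially, to prove \emph{2.} you invoke the general topological fact that any connected component of a polynomial preimage of a compact connected set maps onto the whole set; this is true (a degree/argument-principle count over a bounded open $W$ with $\overline W\cap P^{-1}(Y)=K$ and $P(\partial W)\cap Y=\emptyset$ does it) but it is nowhere stated in the paper, so you should isolate and prove it as a lemma rather than assert it via ``properness.'' The paper instead deduces $Q_{m,j_0}(K)\subset\tilde{\mathcal H}_{j_0}$ from Claim~\ref{InverseImageComponents} and then rules out a strict inclusion by a case analysis (either $K\subsetneq\tilde{\mathcal H}_m$ or $K\subsetneq L$ for a univalent-branch copy $L$ of some $\tilde{\mathcal H}_{M_l}$, each violating maximality of $K$); your surjectivity lemma collapses that case analysis to one line and also gives directly that $Q_{m,M_{k_1}}(K)=\tilde{\mathcal H}_{M_{k_1}}$, which you use at the start. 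Part \emph{3.} is handled essentially as in the paper: $P_{M_k}^{-1}(\tilde{\mathcal H}_{M_k})$ splits into $\tilde{\mathcal H}_{M_k-1}\subset H^k_{M_k-1}$ and a second component $\tilde G^k_{M_k-1}\subset G^k_{M_k-1}$, and minimality of $j_0=M_k$ forces $Q_{m,M_k-1}(K)$ into the latter, the only real care being (as you note) that $\tilde G^k_{M_k-1}$ is a full component of $Q_{M_k-1,M_{k_1}}^{-1}(\tilde{\mathcal H}_{M_{k_1}})$, which follows since $|b_k|>6$ keeps the critical value of $P_{M_k}$ out of $\overline{\mathrm D}(0,2)$.
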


\begin{claimproof} (of Claim \ref{JoiningTimes})
First note that, if $K$ is one of the components of $Q_{m,M_{k_1}}^{-1}(\tilde {\mathcal H}_{M_{k_1}})$, then obviously $Q_{m,M_{k_1}}(K) \cap \tilde {\mathcal H}_{M_{k_1}} \ne \emptyset$ and so there clearly exists some integer $j$, $m \le j \le M_{k_1}$ as small as possible for which  $Q_{m,j}(K) \cap \tilde {\mathcal H}_j \ne \emptyset$ as above. 

We now show that we must have either $j = m$ or $j=M_k$ for some $k_0 \le k \le k_1$. To see this, suppose this fails which we then divide into two cases where we either have that $j > M_{k_0}$ or we have $j < M_{k_0}$.

In the first of these possibilities we must then have that $j > M_k$ for some $k_0 \le k < k_1$ where $k$ is as large as possible. Then, again using the backward invariance condition \emph{3.} of Lemma \ref{HmtInvariance1}, one finds that $Q_{m,M_k}(K)$ must contain points of $(Q_{M_k, j})^{-1}(\tilde {\mathcal H}_j) = \tilde {\mathcal H}_{M_k}$. To see this, note first that we consider all possible inverse images when we take the preimage of $\tilde {\mathcal H}_j$ under $Q_{M_k,j}$ to obtain $\tilde {\mathcal H}_{M_k}$. Therefore, since $Q_{m,j}(K) \cap \tilde {\mathcal H}_j \ne \emptyset$, $\tilde {\mathcal H}_{M_k}$ must then contain points of $Q_{m,M_k}(K)$ corresponding to those choices of inverse branches of $Q_{M_k,j}$ by which one obtains points of $Q_{m,M_k}(K)$ from $Q_{m,j}(K)$. Thus $Q_{m,M_k}(K) \cap \tilde {\mathcal H}_{M_k} \neq \emptyset$ and, since $m < M_k < j$, by the minimality of $j$ we then obtain a contradiction. 

In the other case where the condition $j = m$ or $j=M_k$ for some $k_0 \le k \le k_1$ fails, the second possibility is that $m < j < M_{k_0}$. Using a similar argument to above, we then see that $K \cap \Hmt \ne \emptyset$ which again violates the minimality of $j$ and gives a contradiction. 

If $j = m$, then $K \cap \Hmt \ne \emptyset$ and by \emph{2.} of Lemma  \ref{HmtInvariance1} applied to the composition $Q_{m, M_{k_1}}$, we have that $Q_{m, M_{k_1}}(\Hmt) = \tilde {\mathcal H}_{M_{k_1}}$. Then we must have that 
$K$ is all of $\Hmt$ in view of the connectedness of $\Hmt$ using \emph{4.} of Claim \ref{Hmt} and the maximality of $K$ as a connected component of the inverse image 
$Q_{m,M_{k_1}}^{-1}(\tilde {\mathcal H}_{M_{k_1}})$. Otherwise, if $j > m$, we then have that $Q_{m,j}(K) \cap \tilde {\mathcal H}_j \ne \emptyset$. Since, by \emph{2.} of Lemma  \ref{HmtInvariance1} applied to the composition $Q_{j, M_{k_1}}$,
$Q_{m,j}(K)$ is a connected subset of $Q_{j,M_{k_1}}^{-1}(Q_{j,M_{k_1}}(\tilde {\mathcal H}_j )) = Q_{j,M_{k_1}}^{-1}(\tilde {\mathcal H}_{M_{k_1}})$, it follows immediately from Claim \ref{InverseImageComponents} (applied at times $j$ and $M_{k_1}$) that $Q_{m,j}(K) \subset \tilde {\mathcal H}_j$. 

Suppose now for a contradiction that the connected set $Q_{m,j}(K)$ is a proper subset of $\tilde {\mathcal H}_j$.  Using Claim \ref{InverseImageComponents}, (applied at times $m$ and $j$, and recalling that we are now assuming $j \ne m$ so that $j = M_k$ for some $k_0 \le k \le k_1$) we have that $K$ is either a subset of $\Hmt$ or a subset of one of the other components of $Q_{m,j}^{-1}(\tilde {\mathcal H}_j)$ which is the inverse image of $\tilde {\mathcal H}_{M_l}$ for some $k_0 \le l \le k$ under a univalent branch of $Q_{m,M_l}^{-1}$ defined on a simply connected neighbourhood of  $\tilde {\mathcal H}_{M_l}$. 

In the first case, by \emph{2.} of Lemma  \ref{HmtInvariance1} applied to the composition $Q_{m,j}$, $K$ is then a proper subset of the connected set $\Hmt$. However, since again using \emph{2.} of Lemma  \ref{HmtInvariance1}, $Q_{m, M_{k_1}}(\Hmt) = \tilde {\mathcal H}_{M_{k_1}}$, this violates the maximality of $K$ as a component of $Q_{m,M_{k_1}}^{-1}(\tilde {\mathcal H}_{M_{k_1}})$. 

In the second case, let $L$ be the image of the connected set $\tilde {\mathcal H}_{M_l}$ under the univalent branch of $Q_{m,M_l}^{-1}$ defined on a neighbourhood of  $\tilde {\mathcal H}_{M_l}$ as given above. By \emph{2.} of Lemma  \ref{HmtInvariance1} applied for the composition $Q_{M_l,j}$, $Q_{m,M_l}(K)$ must be a proper subset of $\tilde {\mathcal H}_{M_l}$ since otherwise if $Q_{m,M_l}(K) = \tilde {\mathcal H}_{M_l}$, then we would have $Q_{m,j}(K) = Q_{M_l, j}(Q_{m,M_l}(K)) = Q_{M_l, j}(\tilde {\mathcal H}_{M_l}) = \tilde {\mathcal H}_j$ while we had assumed that $Q_{m,j}(K)$ was a proper subset of $\tilde {\mathcal H}_j$. Using the univalence of this branch of $Q_{m,M_l}^{-1}$, we have that 
$K$ is a proper subset of the connected set $L$. However, 

\vspace{-.3cm}
$$Q_{m, M_{k_1}}(L) = Q_{M_l, M_{k_1}}(Q_{m, M_l}(L)) = Q_{M_l, M_{k_1}}(\tilde {\mathcal H}_{M_l}) = \tilde {\mathcal H}_{M_{k_1}}$$

(this last equality following again from \emph{2.} of Lemma  \ref{HmtInvariance1}, this time applied for the composition $Q_{M_l, M_{k_1}}$), this again violates  maximality of $K$ as a component of $Q_{m,M_{k_1}}^{-1}(\tilde {\mathcal H}_{M_{k_1}})$. Hence, in either case we must have $Q_{m,j}(K) = \tilde {\mathcal H}_j$. 

Thus we have that $K = \Hmt$ in the case that $j$ is equal to $m$ (which is the first of the two possibilities in the statement) and also \emph{1.} and \emph{2.} of the second part where we have $j = M_k$ for some $k_0 \le k \le k_1$. 
For \emph{3.} of the second part, recall that $P_{M_k}$ has a critical value $b_k$ with $|b_k| > 6$ while by Definitions \ref{GmkHmkDef} and \ref{HmtDef}, $\tilde {\mathcal H}_{M_k} \subset \overline {\mathrm D}(0,2)$. Recall that $P_{M_k}$ then has two univalent inverse branches defined on a neighbourhood of this disc, and by \emph{3.} of Lemma \ref{HmtInvariance1}, one of these maps $\tilde {\mathcal H}_{M_k}$ to $\tilde {\mathcal H}_{M_k-1}$ while the other (using Definitions \ref{GmkHmkDef} and \ref{HmtDef}) maps $\tilde {\mathcal H}_{M_k}$ inside $G_{M_k-1}^k$. 
It then follows by the minimality of $j=M_k$ that $Q_{m,M_k-1}(K) \subset G^{M_k}$ as desired and with this the proof of Claim \ref{JoiningTimes} is complete. \end{claimproof}

We call the time $M_k$ (or $m$) in the statement of Claim \ref{JoiningTimes} above the \emph{joining time} for a component $K$ of $Q_{m,M_{k_1}}^{-1}(\tilde {\mathcal H}_{M_{k_1}})$. We will use this and the above claim to prove the following important property which will allow us to finish the proof of \emph{5.} of the statement of Theorem \ref{MainTh1}. Recall that if $m \ge 0$, $k \ge 1$ are such that $m \le M_k -1$, we had the annuli $B^{k,j}_m$, $1 \le j \le 2^{M_k - m - 1}$ which we introduced in Definition \ref{BkjmDef} and which were the $2^{M_k - m - 1}$ components of the 
inverse image of the annulus $A_k = {\mathrm A}(\sqrt{-b_k}, r_k, 2\sqrt{-b_k} - r_k)$ (introduced in Definition \ref{AkDef}) under $Q_{m, M_k -1}$ and which each had the same modulus as $A_k$.

\begin{claim}
\label{SeparatingAnnuliConfiguration}
For each $m$, $k_0$, and  $k_1$ as defined above just before the statement of Claim \ref{JoiningTimes}, the conformal annuli $B^{k,j}_m$, $k_0 \le k \le k_1$, $1 \le j \le 2^{M_{k_1} - m - 1}$ are pairwise disjoint and separate the components of $Q_{m,M_{k_1}}^{-1}({\mathcal H}_{M_{k_1}})$ in the sense that any two components of this set will lie in different complementary components of an annulus $B^{k,j}_m$ for some suitable $k$, $j$.
\end{claim}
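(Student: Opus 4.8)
The plan is to establish the two assertions of the claim separately, and in both the workhorse will be a single placement fact about the round annulus $A_k$. Write $\phi^{k,j}$, $1\le j\le 2^{M_k-m-1}$, for the univalent branches of $Q_{m,M_k-1}^{-1}$ on the disc ${\mathrm D}(\sqrt{-b_k},2\sqrt{-b_k})$ supplied by Claim~\ref{InverseBranches}, and put $D^{k,j}_m=\phi^{k,j}({\mathrm D}(\sqrt{-b_k},2\sqrt{-b_k}))$, so the $D^{k,j}_m$ are pairwise disjoint topological discs, $B^{k,j}_m=\phi^{k,j}(A_k)$ is a conformal annulus compactly contained in $D^{k,j}_m$, and $\phi^{k,j}(\overline{\mathrm D}(\sqrt{-b_k},r_k))$ is the closure of the bounded complementary component of $B^{k,j}_m$. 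One first notes that the closed annulus $\overline{A_k}$ lies in the iterated basin of infinity at time $M_k-1$ (the text following Definition~\ref{AkDef} gives this for $A_k$ itself, and $P_{M_k}$ sends the two bounding circles into $\C\setminus{\mathrm D}(0,2)$ except at the two preimages of $-2$, which escape), so that $\overline{B^{k,j}_m}\subset\Am$ by Theorem~\ref{ThmCompInvar}; hence any connected subset of $Q_{m,M_{k_1}}^{-1}(\tilde{\mathcal H}_{M_{k_1}})$ is disjoint from $\overline{B^{k,j}_m}$, since $\tilde{\mathcal H}_{M_{k_1}}$ lies in ${\mathrm D}(0,2)$ with forward orbit in $\overline{\mathrm D}(0,2)$ at all times $M_n$, hence misses $\Am$. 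The \emph{separation lemma} is then: if $K,L$ are connected subsets of $Q_{m,M_{k_1}}^{-1}(\tilde{\mathcal H}_{M_{k_1}})$ with $Q_{m,M_k-1}(K)\subset G^k_{M_k-1}$, then, taking the unique $j$ with $K\subset D^{k,j}_m$ (which exists by connectedness), Lemma~\ref{rksk} gives $K\subset\phi^{k,j}(\overline{\mathrm D}(\sqrt{-b_k},r_k))$, so $K$ lies in the bounded complementary component of $B^{k,j}_m$; and if moreover either $L\subset D^{k,j'}_m$ with $j'\ne j$, or $Q_{m,M_k-1}(L)\subset H^k_{M_k-1}$ (whence $Q_{m,M_k-1}(L)\subset\overline{\mathrm D}(-\sqrt{-b_k},r_k)$, which is disjoint from $\overline{\mathrm D}(\sqrt{-b_k},r_k)$ by Lemma~\ref{rkskbounds}), then $L$ misses $\phi^{k,j}(\overline{\mathrm D}(\sqrt{-b_k},r_k))$ and, being connected and disjoint from $\overline{B^{k,j}_m}$, lies in the unbounded complementary component; thus $B^{k,j}_m$ separates $K$ from $L$.

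For the pairwise disjointness, two annuli with the same $k$ are distinct components of $Q_{m,M_k-1}^{-1}(A_k)$, hence disjoint. For $k_0\le k<k'\le k_1$ I would first show $Q_{M_k-1,M_{k'}-1}^{-1}(A_{k'})\subset G^k_{M_k-1}\cup H^k_{M_k-1}$: since $A_{k'}\subset{\mathrm D}(\sqrt{-b_{k'}},2\sqrt{-b_{k'}})$ and, by the paragraph preceding \eqref{DisjointAnnuli1}, $Q_{M_{k'-1},M_{k'}-1}^{-1}({\mathrm D}(\sqrt{-b_{k'}},2\sqrt{-b_{k'}}))\subset\overline{\mathrm D}(0,2)$, pulling this back from time $M_{k'-1}$ down to time $M_k-1$, using \eqref{Invariance3} repeatedly and $P_{M_k}^{-1}(\overline{\mathrm D}(0,2))=G^k_{M_k-1}\cup H^k_{M_k-1}$ at the last step, yields the inclusion. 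As $A_k$ is disjoint from $G^k_{M_k-1}\cup H^k_{M_k-1}$, the set $Q_{M_k-1,M_{k'}-1}^{-1}(A_{k'})$ is disjoint from $A_k$; applying $Q_{m,M_k-1}^{-1}$ (preimages of disjoint sets being disjoint), $Q_{m,M_{k'}-1}^{-1}(A_{k'})$ is disjoint from $Q_{m,M_k-1}^{-1}(A_k)$, whence $B^{k',j'}_m\cap B^{k,j}_m=\emptyset$ for all $j,j'$.

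For the separation I would induct on $k_1\ge k_0$, with $m$ (hence $k_0$) fixed. In the base case $k_1=k_0$, every component of $Q_{m,M_{k_0}}^{-1}(\tilde{\mathcal H}_{M_{k_0}})$ other than $\tilde{\mathcal H}_m$ has joining time $M_{k_0}$ (Claim~\ref{JoiningTimes}), so by \emph{3.} of that claim it maps at time $M_{k_0}-1$ into $G^{k_0}_{M_{k_0}-1}$, in fact \emph{onto} the $G$-side component $T$ of $P_{M_{k_0}}^{-1}(\tilde{\mathcal H}_{M_{k_0}})$ (since $P_{M_{k_0}}$ is injective on $G^{k_0}_{M_{k_0}-1}$); hence two such components are $\phi^{k_0,j_1}(T),\phi^{k_0,j_2}(T)$ with $j_1\ne j_2$, separated by $B^{k_0,j_1}_m$ via the separation lemma, while $\tilde{\mathcal H}_m$ maps onto $\tilde{\mathcal H}_{M_{k_0}-1}\subset H^{k_0}_{M_{k_0}-1}$, so the lemma separates it from each of the others. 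For the inductive step with $k_1>k_0$, given distinct components $K_1,K_2$, look at their joining times. If one of them, say $K_1$, has joining time $M_{k_1}$, then $Q_{m,M_{k_1}-1}(K_1)\subset G^{k_1}_{M_{k_1}-1}$, while $Q_{m,M_{k_1}-1}(K_2)\subset G^{k_1}_{M_{k_1}-1}$ (if $K_2$ also has joining time $M_{k_1}$) or $Q_{m,M_{k_1}-1}(K_2)=\tilde{\mathcal H}_{M_{k_1}-1}\subset H^{k_1}_{M_{k_1}-1}$ (if its joining time is $<M_{k_1}$); in the first subcase $K_1,K_2$ must lie in different discs $D^{k_1,j}_m$ (else $Q_{m,M_{k_1}-1}$, univalent there, would force $K_1=K_2$), and in either subcase the separation lemma with $k=k_1$ gives a separating $B^{k_1,j}_m$. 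If instead both joining times are $<M_{k_1}$, then $Q_{m,M_{k_1-1}}(K_i)=\tilde{\mathcal H}_{M_{k_1-1}}$ by \emph{2.} of Claim~\ref{JoiningTimes} and \emph{2.} of Lemma~\ref{HmtInvariance1}, and (checking, by pushing forward one more block over $[M_{k_1-1},M_{k_1}]$, that each $K_i$ is maximal as such) $K_1,K_2$ are distinct \emph{components} of $Q_{m,M_{k_1-1}}^{-1}(\tilde{\mathcal H}_{M_{k_1-1}})$; the inductive hypothesis then furnishes a separating $B^{k,j}_m$ with $k_0\le k\le k_1-1$.

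The step I expect to demand the most care is not conceptual but the bookkeeping of placements: verifying that $\overline{A_k}$, hence $\overline{B^{k,j}_m}$, lies in the iterated basin of infinity — so that the orbit-bounded sets in the separation lemma genuinely avoid it — and, for the induction, verifying that the components of $Q_{m,M_{k_1}}^{-1}(\tilde{\mathcal H}_{M_{k_1}})$ with joining time strictly below $M_{k_1}$ are \emph{exactly} the components of $Q_{m,M_{k_1-1}}^{-1}(\tilde{\mathcal H}_{M_{k_1-1}})$, which is what makes the induction close. Once these are in hand, the disjointness of the discs $D^{k,j}_m$ together with the elementary estimates of Lemmas~\ref{rksk} and~\ref{rkskbounds} do the rest.
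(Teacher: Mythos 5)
Your proof is correct and takes essentially the same route as the paper's: pairwise disjointness of the $B^{k,j}_m$ follows from the backward-invariance inclusion the paper records as \eqref{DisjointAnnuli1}, and the separation is driven by the $G^k_{M_k-1}$ versus $H^k_{M_k-1}$ dichotomy at time $M_k-1$ coming out of Claim~\ref{JoiningTimes}. The one structural difference is the induction: the paper fixes $k_1$ and inducts upward on the joining-time index $k$, whereas you fix $m$ and induct on $k_1$, reducing to $Q_{m,M_{k_1-1}}^{-1}(\tilde{\mathcal H}_{M_{k_1-1}})$ when neither component under consideration has joining time $M_{k_1}$; this reduction does work, since by forward invariance $Q_{m,M_{k_1-1}}^{-1}(\tilde{\mathcal H}_{M_{k_1-1}}) \subset Q_{m,M_{k_1}}^{-1}(\tilde{\mathcal H}_{M_{k_1}})$, and therefore any component of the larger set lying inside the smaller set is automatically a component of the smaller set, with the same joining time. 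Extracting the separation mechanism as a stand-alone lemma, and remarking that $\overline{A_k} \subset {\mathcal A}_{\infty,M_k-1}$ (the paper only states this for the open annulus, which already suffices since a connected set disjoint from an open annulus lies entirely in one complementary component), are clean cosmetic choices, but the underlying argument is the paper's.
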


The reader might find it helpful to consult with Figure \ref{SeparatingAnnuliConfigurationPicture} while going through the proof below. 

\begin{claimproof} (of Claim \ref{SeparatingAnnuliConfiguration}) We proceed by induction on the integer $k$ associated with the joining times $M_k$, $k_0 \le k \le k_1$ in Claim \ref{JoiningTimes} above. Again by this claim, once we have dealt with all these values of $k$, we will have also covered all components of $Q_{m,M_{k_1}}^{-1}({\mathcal H}_{M_{k_1}})$ (except $\Hmt$) and it will then be easy to argue that we will be done.

So suppose for the base case that $K$ is a component of $Q_{m,M_{k_1}}^{-1}({\mathcal H}_{M_{k_1}})$ for which the joining time is $M_{k_0}$. From \emph{3.} of Claim \ref{JoiningTimes} we then know that $Q_{m, M_{k_0}-1}(K) \subset G_{M_{k_0}-1}^{k_0}$ while by \emph{2.} of Lemma \ref{HmtInvariance1} and Definitions \ref{GmkHmkDef}, \ref{HmtDef}, we know that $Q_{m, M_{k_0}-1}(\Hmt) = \tilde {\mathcal H}_{M_{k_0}-1} \subset H_{M_{k_0}-1}^{k_0}$. 

Using \emph{1.} of Lemma \ref{rksk} and Claim \ref{HmkConn}, the two connected sets $Q_{m, M_{k_0}-1}(K)$ and $ \tilde {\mathcal H}_{M_{k_0}-1}$ then lie in different complementary components of the (round) annulus $A_{k_0} = {\mathrm A}(\sqrt{-b_{k_0}}, r_{k_0}, 2\sqrt{-b_{k_0}} - r_{k_0})$ and, if we take a preimage under $Q_{m,M_{k_0}-1}$ of the corresponding disc ${\mathrm D}(\sqrt{-b_{k_0}}, 2\sqrt{-b_{k_0}}-r_{k_0})$, we have using Claim \ref{InverseBranches} and \emph{3.} of Claim \ref{JoiningTimes} that each of the components of $Q_{m,M_{k_1}}^{-1}(\tilde {\mathcal H}_{M_{k_1}})$ whose joining time is $M_{k_0}$ lies in the bounded complementary component of precisely one of the (pairwise disjoint) annuli $B_m^{k_0,j}$, $1 \le j \le 2^{M_{k_0} - m - 1}$. On the other hand, $ \tilde {\mathcal H}_{M_{k_0}-1}$ lies outside $A_{k_0}$ and (since the preimages of disjoint sets are disjoint), by \emph{3.} of Lemma \ref{HmtInvariance1}, 
$\Hmt = (Q_{m, M_{k_0}-1})^{-1}( \tilde {\mathcal H}_{M_{k_0}-1})$ lies outside the preimage under $Q_{m, M_{k_0}-1}$ of the disc ${\mathrm D}(\sqrt{-b_{k_0}}, 2\sqrt{-b_{k_0}}-r_{k_0})$ and thus in the unbounded complementary components of the annuli $B_m^{k_0,j}$, $1 \le j \le 2^{M_{k_0} - m - 1}$. Thus we have shown that $\Hmt$ and those components of 
$Q_{m,M_{k_1}}^{-1}(\tilde {\mathcal H}_{M_{k_1}})$ whose joining time is $M_{k_0}$ are separated by the collection of conformal annuli $B_m^{k_0,j}$, $1 \le j \le 2^{M_{k_0} - m - 1}$ in the sense that, given any two distinct components $K_1$, $K_2$ of $Q_{m,M_{k_1}}^{-1}(\tilde {\mathcal H}_{M_{k_1}})$ whose joining time is $M_{k_0}$ or $m$ (which is the only earlier joining time), these will be separated by one of the annuli $B_m^{k_0,j}$ above in the sense that one of these components will lie in the bounded and the other in the unbounded complementary component of this conformal annulus. 

For the induction hypothesis suppose the result is now true for some $k_0 \le k < k_1$, namely that the conformal annuli $B_m^{i,j}$, $k_0 \le i \le k$, $1 \le j \le 2^{M_i - m - 1}$ are pairwise disjoint and that this collection separates all the components of $Q_{m,M_{k_1}}^{-1}(\tilde {\mathcal H}_{M_{k_1}})$ whose joining time is at most $M_k$ (including $\Hmt$) in the sense that if $K_1$ and $K_2$ are any two distinct such components, then for some $B_m^{i,j}$, one will lie in the bounded and the other in the unbounded complementary component of this conformal annulus. 

We now turn to the induction step. By taking preimages of the disc ${\mathrm D}(\sqrt{-b_{k+1}}, 2\sqrt{-b_{k+1}}-r_{k+1})$ at time $M_{k+1}-1$ under $Q_{M_k-1, M_{k+1}-1}$ and applying the backward invariant condition \eqref{DisjointAnnuli1} and Definition \ref{AkDef}, we have that 

\begin{eqnarray*}
(Q_{M_k-1, M_{k+1}-1})^{-1}(A_{k+1}) &=& \\
(Q_{M_k-1, M_{k+1}-1})^{-1}(\!\!\!\!\!&{\mathrm A}&\!\!\!\!\!(\sqrt{-b_{k+1}}, r_{k+1}, 2\sqrt{-b_{k+1}} - r_{k+1}))\\ \subset& &\hspace{-.8cm}(G_{M_k-1}^k \cup H_{M_k -1}^k).
\end{eqnarray*}

 On taking a further preimage under $Q_{m,M_k-1}$ (in stages $Q_{M_{i-1}-1,M_i-1}$ starting with $Q_{M_{k-1}-1,M_k-1}$ and lastly $Q_{m,M_{k_0}-1}$) and applying \eqref{DisjointAnnuli1} repeatedly, we then see that the conformal annuli $B_m^{k+1,j}$, $1 \le j \le 2^{M_{k+1} - m - 1}$ are disjoint from our earlier collection $B_m^{i,j}$, $k_0 \le i \le k$, $1 \le j \le 2^{M_i - m - 1}$. In addition, using Claim \ref{InverseBranches} these annuli are also disjoint from each other so that the whole collection $B_m^{i,j}$, $k_0 \le i \le k+1$, $1 \le j \le 2^{M_i - m - 1}$ is now pairwise disjoint.

\afterpage{\clearpage}

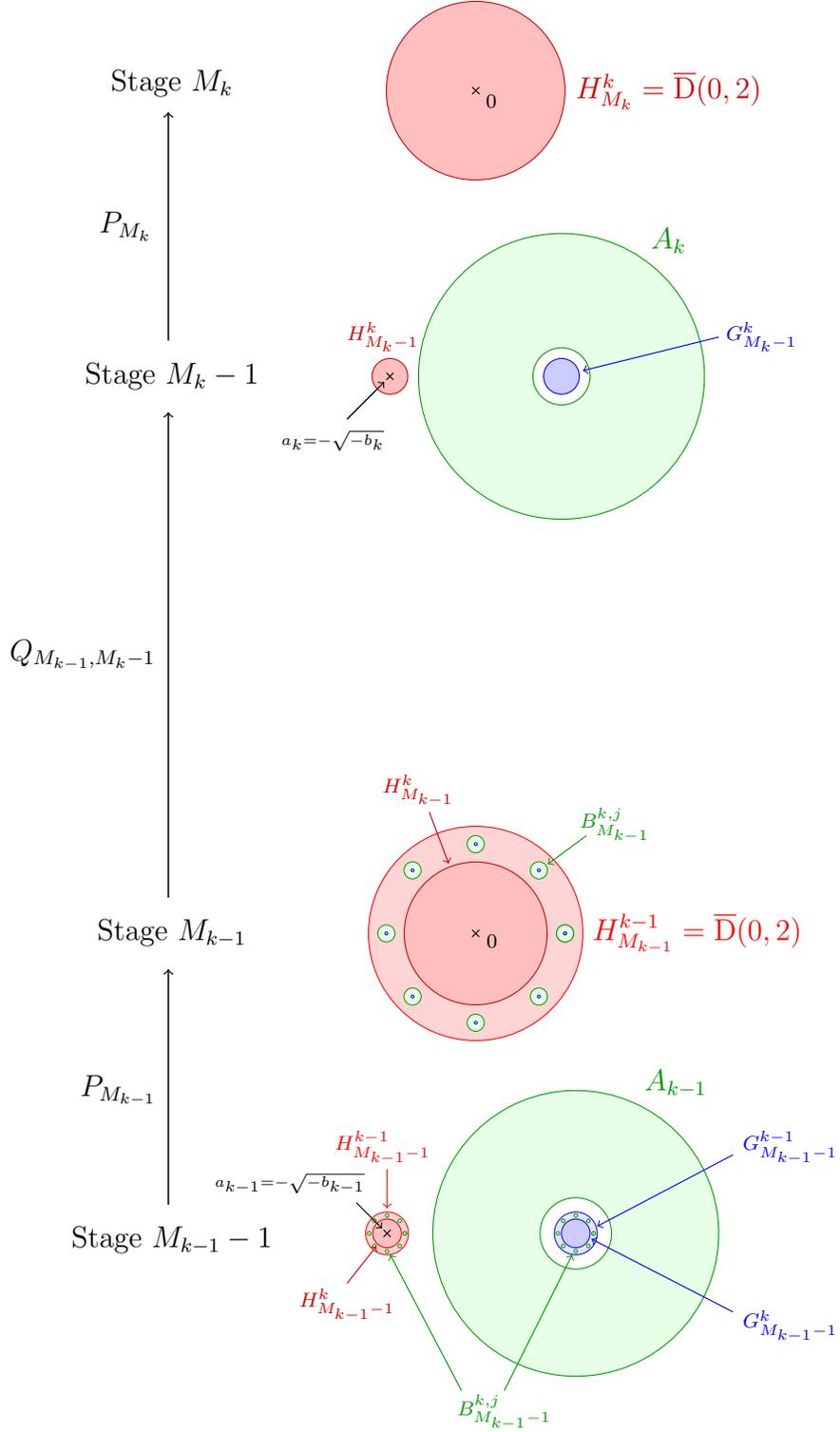
\begin{figure}
\vspace{.5cm}
\begin{tikzpicture}

\node at (-3.45,11.1) {Stage $M_k$};

\node at (-4.1, 9.1) {$P_{M_k}$};

{\color{darkred}
\draw[fill=lesspalepink] (.8,11) circle (1.25);}

 {\color{black}
  \draw[line width=0.15mm] (.75,10.95) -- (.85,11.05);
  \draw[line width=0.15mm] (.75,11.05) -- (.855,10.95);
  
  \node at (1.02,10.85) {$\scriptstyle 0$};
  
  \draw[line width=0.15mm, ->] (-1.0,6.4) -- (-.48,6.92);
  }

\node at (3.5, 11) {\color{darkred}$H^k_{M_k} = \overline {\mathrm{D}}(0,2)$};

\node at (-3.45,7.0) {Stage $M_k-1$};

\draw[line width=0.2mm, ->] (-3.5,7.5) -- (-3.5,10.7);

{\color{darkgreen}
\draw[fill=palegreen] (2,7) circle (2);
\draw[fill=white] (2,7) circle (.4);}

\node at (3.5, 8.9) {\color{darkgreen}$A_k$};

  \node at (4.8, 7.55) {\color{darkblue}$\scriptstyle  G^k_{M_k-1}$};
  
{\color{darkblue}
\draw[line width=0.15mm, ->] (4.2,7.55) -- (2.3,7.08);
}

  {\color{darkblue}
  \draw[fill=lesspaleblue] (2,7) circle (.25);}
  
  \node at (-.5, 7.55) {\color{darkred}$\scriptstyle  H^k_{M_k-1}$};

   {\color{darkred}
  \draw[fill=lesspalepink] (-.4,7) circle (.25);}
  
  {\color{black}
  \draw[line width=0.15mm] (-.45,6.95) -- (-.35,7.05);
  \draw[line width=0.15mm] (-.45,7.05) -- (-.35,6.95);
  
  \node at (-1.2,6.1) {$\scriptscriptstyle a_k = - \sqrt{-b_k}$};
  
  \draw[line width=0.15mm, ->] (-1.0,6.4) -- (-.48,6.92);
  }
  
  {\color{red}
\draw[fill=palepink] (.8,-.8) circle (1.5);}

 \node at (3.9, -.8) {\color{red}$H^{k-1}_{M_{k-1}} = \overline {\mathrm{D}}(0,2)$};

{\color{darkred}
\draw[fill=lesspalepink] (.8,-.8) circle (1.00);}

  \node at (0, 1.2) {\color{darkred}$\scriptstyle  H^k_{M_{k-1}}$};
 
  {\color{black}
  \draw[line width=0.15mm] (.75,-.75) -- (.85,-.85);
  \draw[line width=0.15mm] (.75,-.85) -- (.85,-.75);
  
  \node at (1.03,-.902) {$\scriptstyle 0$};
 
  }   
  
{\color{darkred}
\draw[line width=0.15mm, ->] (.15,0.9) -- (.42,.18);
}

\node at (2.75, 0.72) {\color{darkgreen}$\scriptstyle  B^{k,j}_{M_{k-1}}$};

{\color{darkgreen}
\draw[line width=0.15mm, ->] (2.35,0.57) -- (1.81,0.17);
}

\node at (-3.45,-.8) {Stage $M_{k-1}$};

\node at (-4.7, 3.1) {$Q_{M_{k-1},M_k -1}$};

\draw[line width=0.2mm, ->] (-3.5,-.3) -- (-3.5,6.5);

{\color{darkgreen}
  \foreach \phi in {0,45,...,360}{
    \draw[line width=0.1mm,fill=palegreen] ({.8+1.25*cos(\phi)},{-.8+1.25*sin(\phi)}) circle (.12);
  }}

  {\color{blue}
  \foreach \phi in {0,45,...,360}{
    \draw[line width=0.1mm, fill=paleblue] ({.8+1.25*cos(\phi)},{-.8+1.25*sin(\phi)}) circle (.02);
  }}
  
 \node at (-4.2, -3) {$P_{M_{k-1}}$};
  
  \draw[line width=0.2mm, ->] (-3.5,-4.6) -- (-3.5,-1.3);
  
  \node at (-3.45,-5.1) {Stage $M_{k-1}-1$};
  
 {\color{darkgreen} 
 \draw[fill=palegreen] (2.2,-5) circle (2);
\draw[fill=white] (2.2,-5) circle (.5); }

\node at (3.6, -2.9) {\color{darkgreen}$A_{k-1}$};

{\color{blue}
  \draw[line width=0.05mm, fill=paleblue] (2.2,-5) circle (.3);}

 {\color{darkblue}
  \draw[line width=0.05mm, fill=lesspaleblue] (2.2,-5) circle (.2);}
  
  \node at (5.2, -3.8) {\color{blue}$\scriptstyle G^{k-1}_{M_{k-1}-1}$};
  
  {\color{blue}
\draw[line width=0.15mm, ->] (4.4,-3.9) -- (2.5,-4.9);
}

  \node at (5.2, -6.3) {\color{darkblue}$\scriptstyle G^k_{M_{k-1}-1}$};
  
    {\color{darkblue}
\draw[line width=0.15mm, ->] (4.4,-6.1) -- (2.42,-5.08);
}
  
  {\color{darkgreen}
  \foreach \phi in {0,45,...,360}{
    \draw[line width=0.05mm, fill=palegreen] ({2.2+.25*cos(\phi)},{-5+.25*sin(\phi)}) circle (.025);
  }}
  
  \node at (1.2, -7.5) {\color{darkgreen}$\scriptstyle B^{k,j}_{M_{k-1}-1}$};
  
{\color{darkgreen}
\draw[line width=0.15mm, ->] (1.2,-7.2) -- (2.15, -5.3);
}
  
       {\color{red}
  \draw[line width=0.05mm, fill=palepink] (-.44,-5) circle (.3);}
  
     {\color{darkred}
  \draw[line width=0.05mm, fill=lesspalepink] (-.44,-5) circle (.2);}
  
   {\color{darkgreen}
  \foreach \phi in {0,45,...,360}{
    \draw[line width=0.05mm, fill=palegreen] ({-.44+.25*cos(\phi)},{-5+.25*sin(\phi)}) circle (.025);
  }}
  
\node at (-.5, -3.8) {\color{red}$\scriptstyle H^{k-1}_{M_{k-1}-1}$};

{\color{red}
\draw[line width=0.15mm, ->] (-.44,-4.1) -- (-0.44, -4.65);
}
  
 \node at (-1.0, -6.0) {\color{darkred}$\scriptstyle H^k_{M_{k-1}-1}$};
 
{\color{darkred}
\draw[line width=0.15mm, ->] (-1.0,-5.7) -- (-0.59, -5.18);
}
 
 {\color{darkgreen}
\draw[line width=0.15mm, ->] (0.6,-7.2) -- (-0.395, -5.3);
}

 {\color{black}
  \draw[line width=0.15mm] (-.49,-4.95) -- (-.39,-5.05);
  \draw[line width=0.15mm] (-.49,-5.05) -- (-.39,-4.95);
  
  \node at (-1.8,-4.3) {$\scriptscriptstyle a_{k-1} = - \sqrt{-b_{k-1}}$};
  
  \draw[line width=0.15mm, ->] (-.87,-4.57) -- (-.51,-4.93);
  }
 
\end{tikzpicture}   
 \caption{The setup for Claim \ref{SeparatingAnnuliConfiguration}}  \label{SeparatingAnnuliConfigurationPicture}
\end{figure}

\newpage

For any component $K_1$ of $Q_{m,M_{k_1}}^{-1}(\tilde {\mathcal H}_{M_{k_1}})$ whose joining time is $M_{k+1}$, we must have by \emph{3.} of Claim \ref{JoiningTimes} that $Q_{m, M_{k+1}-1}(K_1) \subset G_{M_{k+1}-1}^{k+1}$. On the other hand, for any component $K_2$ of this set whose joining time is strictly earlier than $M_{k+1}$, by \emph{2.} of Claim \ref{JoiningTimes}, \emph{2.} of Lemma \ref{HmtInvariance1}, and Definitions \ref{GmkHmkDef}, \ref{HmtDef}, we must have $Q_{m, M_{k+1}-1}(K_2) \subset H_{M_{k+1}-1}^{k+1}$. 
On again taking the preimage of the disc ${\mathrm D}(\sqrt{-b_{k+1}}, 2\sqrt{-b_{k+1}}-r_{k+1})$ at time $M_{k+1}-1$ under $Q_{m, M_{k+1}-1}$, we see using Claim \ref{InverseBranches} that each of those components of $Q_{m,M_{k_1}}^{-1}(\tilde {\mathcal H}_{M_{k_1}})$ whose joining time is $M_{k+1}$ will lie in the bounded complementary component of precisely one of the pairwise disjoint conformal annuli $B_m^{k+1,j}$, $1 \le j \le 2^{M_{k+1} - m - 1}$ while those whose joining time is strictly earlier than $M_{k+1}$ will lie in the unbounded complementary components of these annuli. Combining this with the induction hypothesis allows us to deduce easily that the separating condition for $k+1$ follows, namely that the collection of conformal annuli $B_m^{i,j}$, $k_0 \le i \le k+1$, $1 \le j \le 2^{M_i - m - 1}$ separates all the components of $Q_{m,M_{k_1}}^{-1}(\tilde {\mathcal H}_{M_{k_1}})$ whose joining time is at most $M_{k+1}$ in the same sense as given above. With this, the induction step is complete. 

Continuing in this way until $k = k_1$ and remembering from Claim \ref{JoiningTimes} that this is the latest possible joining time so that all components of $Q_{m,M_{k_1}}^{-1}(\tilde {\mathcal H}_{M_{k_1}})$ are accounted for, we have thus proved Claim \ref{SeparatingAnnuliConfiguration}.  \end{claimproof}

We showed above shortly after introducing the conformal annuli $B^{k,j}_m$ above in Definition \ref{BkjmDef} that these annuli all lie in $\Am$ and it follows from Claim \ref{SeparatingAnnuliConfiguration} that these annuli are all disjoint, so that by the claim above, all components of $Q_{m,M_{k_1}}^{-1}(\tilde {\mathcal H}_{M_{k_1}})$ (including $\Hmt$) are separated from each other by conformal annuli which lie in the basin of infinity (and since these annuli lie in $\Am$, it is impossible for any points of $\Jm$ to `intrude' into them as $n$ tends to infinity). 

Since by part \emph{4.} of Claim \ref{Hmt} $\partial \Hnt$ is connected, if $k_0$ is as small as possible so that $m \le M_{k_0}$, on letting $k$ go to infinity, using \eqref{HmDef1} we then obtain that $\Hm  = \cup_{k=k_0}^\infty{Q_{m,M_k}^{-1}(\partial \tilde{\mathcal H}_{M_k})}$ does indeed consist of countably infinitely many connected components which from above lie in different components of $\Jm$, which finishes the proof of \emph{5.} of the statement of Theorem \ref{MainTh1}.

We next prove \emph{1.} in the statement of Theorem \ref{MainTh1}. As stated above in Claim \ref{Hm1}, $\Hm$ is precisely the set of points $z \in \Jm$ for which $Q_{m, M_k-1}(z) \in H^k_{M_k -1}$ for all but finitely many $k$. Theorem \ref{ThmCompInvar} then tells us that the sets $\Hm$ give us a completely invariant collection. Next, for each $m \ge 0$ we define

\begin{equation}
\label{GmDef1}
\Gm =  \Jm \setminus \Hm. \vspace{.2cm}
\end{equation}

It then follows from Theorem \ref{ThmCompInvar} and the complete invariance of the sets $\Hm$ that the sets $\Gm$ also give us a completely invariant collection. 

In addition, by \emph{1.} of Lemma \ref{GmkHmkDecr} the sets $G^k_m$ are nested, compact, and non-empty so that, if $k_0$ is as small as possible so that $M_{k_0}  - 1 \ge m$, then $\cap_{k \ge k_0} G^k_m \neq \emptyset$. However, by \emph{2.} of this same lemma, this is contained in the set of points $z$ for which $Q_{m,M_k-1}(z) \in G^k_{M_k -1}$ for all $k \ge k_0$. 

By Claim \ref{Hm1} $\Gm$ is precisely the set of points $z \in \Jm$ for which $Q_{m, M_k-1}(z) \notin H^k_{M_k -1}$ for infinitely many $k$. However, by \eqref{Invariance1}, \eqref{Invariance2} and Lemma \ref{JSubseq}, it follows that the iterates $Q_{M_k-1,n}$ escape uniformly to infinity on $\chat \setminus (G^k_{M_k -1} \cup H^k_{M_k -1})$ so that ${\mathcal J}_{M_k -1} \subset G^k_{M_k -1} \cup H^k_{M_k -1}$. From this, we can then see that $\Gm$ is also precisely the set of points $z \in \Jm$ for which $Q_{m, M_k-1}(z) \in G^k_{M_k -1}$ for infinitely many $k$. It follows that $\cap_{k \ge k_0} G^k_m \subset \Gm$ whence the set $\Gm$ is non-empty.

Recalling that we already observed that $\Hm$ is non-empty (shortly after defining this set in \eqref{HmDef1}), 
\emph{1.} in the statement of Theorem \ref{MainTh1} then follows. Also, since $\Hm$ is $\Fsigma$, $\Gm$ is then automatically a (relatively) $\Gdelta$ subset of $\Jm$. Since $\Gm$ is then precisely the set of points $z \in \Jm$ for which $Q_{m, M_k-1}(z) \in G^k_{M_k -1}$ for infinitely many $k$, it follows that, if $z \in \Gm$, then so is any point in $Q_{m,n}^{-1}(Q_{m,n}(z))$ for any fixed $n \ge m$.  Applying \emph{1.} of B\"uger's result (Theorem \ref{SelfSimilarity}) combined with Lemma \ref{JSubseq}, as we did above for the sets $\Hm$, it then follows $\Gm$ is then dense in $\Jm$. 


It now remains to establish \emph{2.} and \emph{4.} of the statement of Theorem \ref{MainTh1}. Recall from Claim \ref{InverseBranches} above that, if $0 \le m \le M_k -1$, 
each of the polynomials $Q_{m, M_k -1}$ has a full set of $2^{M_k - m - 1}$ inverse branches defined on the disc ${\mathrm D}(\sqrt{-b_k}, 2\sqrt{-b_k})$. In addition, if $M_{k-1} \le m \le M_k -1$, then these preimages differ by rotations of $\tfrac{2\pi}{2^{M_k - m - 1}}$. Recall also the (round) annuli $A_k={\mathrm A}(\sqrt{-b_k}, r_k, 2\sqrt{-b_k} - r_k) = {\mathrm A}(-a_k, r_k, 2\sqrt{-b_k} - r_k)$ at time $M_k -1$ as defined in Definition \ref{AkDef} shortly before \eqref{InverseImageofDisc}. In view of \emph{1.} of Lemma \ref{rksk}, $A_k$ then contains $G^k_{M_k-1}$ and $H^k_{M_k-1}$ in its bounded and unbounded complementary components respectively, so that it separates these sets. From above, for each $0 \le m \le M_k -1$, the inverse image of this annulus under $Q_{m, M_k -1}$ consists of $2^{M_k - m - 1}$ conformal annuli of the same modulus as $A_k$
which we denoted by $B^{k,j}_m$, $1 \le j \le 2^{M_k - m - 1}$ in Definition \ref{BkjmDef}. Again, as noted earlier, by Lemma \ref{rkskbounds} the modulus of $A_k$ is $\log \tfrac{ 2\sqrt{-b_k} - r_k}{r_k}  > \log (4\sqrt{-b_k} - 1)$ which then tends to infinity as $k \to \infty$ as $b_k \to - \infty$ as $k \to \infty$.

To use these annuli $B^{k,j}_m$ for pointwise thinness, we need to show that their Euclidean diameters tend to $0$ as $k$ tends to infinity. More precisely we have: 

\begin{claim}
\label{ShrinkingSeparatingAnnuli1}
For each $m \ge 0$, if $k_0$ is the smallest integer such that $m \le M_{k_0}-1$ then, for each $k \ge k_0$ and each $1 \le j \le 2^{M_k - m - 1}$, 
$$ \diam B^{k, j}_m \le \frac{(4\pi + \tfrac{3}{2})\,6\sqrt{-b_{k_0}}}{2^{k - k_0}}.$$
\end{claim}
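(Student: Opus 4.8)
\emph{Strategy.} The idea is to pull the round annulus $A_k$ back to time $m$ in stages, passing through the milestone times $M_{k_0}<M_{k_0+1}<\cdots<M_{k-1}<M_k-1$, and to track the univalent inverse branch $\phi$ of $Q_{m,M_k-1}$ (furnished by Claim~\ref{InverseBranches}) with $\phi(A_k)=B^{k,j}_m$. Concretely, for $k>k_0$ factor
$$Q_{m,M_k-1}=Q_{M_{k-1},M_k-1}\circ\bigl(Q_{M_{k-2},M_{k-1}}\circ\cdots\circ Q_{M_{k_0},M_{k_0+1}}\bigr)\circ Q_{m,M_{k_0}},$$
so that $\phi=\chi_{\mathrm{first}}\circ\chi_{\mathrm{mid}}\circ\psi$, where $\psi$ is a branch of $(Q_{M_{k-1},M_k-1})^{-1}$ on $A_k$, $\chi_{\mathrm{mid}}$ is the corresponding branch of $(Q_{M_{k_0},M_{k-1}})^{-1}$ (a composition of $k-1-k_0$ branches of the maps $(Q_{M_i,M_{i+1}})^{-1}$), and $\chi_{\mathrm{first}}$ is a branch of $(Q_{m,M_{k_0}})^{-1}$. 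Then $B^{k,j}_m=\chi(\psi(A_k))$ with $\chi:=\chi_{\mathrm{first}}\circ\chi_{\mathrm{mid}}$, and I will bound $\diam B^{k,j}_m$ by $\|\chi'\|$ times the length of a short path joining two points of $\psi(A_k)$.

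\emph{The first pullback lands in $\overline{\mathrm A}(0,\tfrac12,2)$.} Since $Q_{M_{k-1},M_k-1}=z^{2^{m_k}}+a_k$ maps $\overline{\mathrm D}(0,2)$ onto $\overline{\mathrm D}(a_k,2^{2^{m_k}})$, which by \eqref{OuterRadiusBound1} contains $\mathrm D(\sqrt{-b_k},2\sqrt{-b_k})\supset A_k$, while it maps $\overline{\mathrm D}(0,\tfrac12)$ into $\overline{\mathrm D}(a_k,2^{-2^{m_k}})$, which by \eqref{OuterRadiusBound1} and Lemma~\ref{rkskbounds} is disjoint from $A_k$ (it lies within $r_k$ of $a_k$, while $A_k$ avoids $\overline{\mathrm D}(a_k,r_k)$). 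Hence $\psi(A_k)\subset Q_{M_{k-1},M_k-1}^{-1}(A_k)\subset\overline{\mathrm A}(0,\tfrac12,2)$. Equivalently, reading moduli: for $z\in\psi(A_k)$ one has $|z|^{2^{m_k}}\in(r_k,4\sqrt{-b_k}-r_k)$, and \eqref{InnerRadiusBound1}, \eqref{OuterRadiusBound1} with $r_k\ge\tfrac{1}{\sqrt{-b_k}}$ give $\tfrac12\le|z|<2$. (One also sees $\psi(A_k)$ sits in an angular wedge of width $\le 2\pi/2^{m_k}$, a fact only needed for the base case $k=k_0$ below.)

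\emph{Contraction of the remaining pullbacks.} For each of the $k-1-k_0$ middle blocks, Claim~\ref{InvariantAnnuliDiscs}(1) keeps the image inside $\overline{\mathrm A}(0,\tfrac12,2)\subset\mathrm A(0,\tfrac13,3)$ and Claim~\ref{Contracting1}(2) gives derivative $<\tfrac12$ there, so by the chain rule $|\chi_{\mathrm{mid}}'|<(\tfrac12)^{k-1-k_0}$ on $\overline{\mathrm A}(0,\tfrac12,2)$. For the first block I claim $|\chi_{\mathrm{first}}'|<3\sqrt{-b_{k_0}}$ on $\overline{\mathrm A}(0,\tfrac12,2)$: because $k_0$ is minimal with $M_{k_0}-1\ge m$ we have $M_{k_0-1}\le m\le M_{k_0}-1$, so the estimate in the proof of Claim~\ref{Contracting1}(1) applies and yields $|\chi_{\mathrm{first}}'|<\tfrac{12\sqrt{-b_{k_0}}}{2^{M_{k_0}-m}}\le 3\sqrt{-b_{k_0}}$ when $m\le M_{k_0}-2$ (note $M_{k_0}-m\ge 2$), while when $m=M_{k_0}-1$ the map $Q_{m,M_{k_0}}$ is just $z^2+b_{k_0}$, whose inverse branches have derivative $<\tfrac12$ on $\mathrm A(0,\tfrac13,3)$. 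Multiplying, $|\chi'|<3\sqrt{-b_{k_0}}\,(\tfrac12)^{k-1-k_0}=\tfrac{6\sqrt{-b_{k_0}}}{2^{k-k_0}}$ on $\overline{\mathrm A}(0,\tfrac12,2)$.

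\emph{From derivative to diameter, and the base case.} Any two points $p,q\in\overline{\mathrm A}(0,\tfrac12,2)$ are joined inside $\overline{\mathrm A}(0,\tfrac12,2)\subset\mathrm A(0,\tfrac13,3)$ by a circular arc at radius $|p|\le 2$ followed by a radial segment, of total length $\le 2\pi\cdot2+(2-\tfrac12)=4\pi+\tfrac32$; since $\chi$ is holomorphic on $\mathrm A(0,\tfrac13,3)$, integrating $|\chi'|$ along such a path between $p,q\in\psi(A_k)$ gives $\diam B^{k,j}_m=\diam\chi(\psi(A_k))\le(4\pi+\tfrac32)\cdot\tfrac{6\sqrt{-b_{k_0}}}{2^{k-k_0}}$, as required. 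For $k=k_0$ there are no middle or first blocks: $Q_{m,M_{k_0}-1}$ equals $z^{2^{M_{k_0}-1-m}}+a_{k_0}$ (or the identity when $m=M_{k_0}-1$), so $B^{k_0,j}_m$ is directly a sectoral preimage component, under a power map, of the round annulus $A_{k_0}-a_{k_0}=\mathrm A(2\sqrt{-b_{k_0}},r_{k_0},2\sqrt{-b_{k_0}}-r_{k_0})$; this component has angular width $\le 2\pi$ and all moduli $\le(4\sqrt{-b_{k_0}})^{1/2^{M_{k_0}-1-m}}\le 4\sqrt{-b_{k_0}}$, whence $\diam B^{k_0,j}_m\le 4\sqrt{-b_{k_0}}(2\pi+1)\le 6\sqrt{-b_{k_0}}(4\pi+\tfrac32)$, matching the bound since $2^{k-k_0}=1$.

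\emph{Main obstacle.} The genuinely delicate part is the branch bookkeeping: one must verify that the branch of $(Q_{m,M_{k-1}})^{-1}$ picked out by $\phi$ really extends holomorphically to (a neighbourhood of) $\overline{\mathrm A}(0,\tfrac12,2)\subset\mathrm A(0,\tfrac13,3)$ — so that the connecting path may be taken there — that the middle‑block branches compose correctly while remaining inside $\overline{\mathrm A}(0,\tfrac12,2)$, and that the various square‑root and power‑map branches at the milestone times never meet the critical values $a_j,b_j$; all of this should follow from Claims~\ref{InverseBranches}, \ref{InvariantAnnuliDiscs}, \ref{Contracting1} together with the placement of the critical values outside $\mathrm D(\sqrt{-b_k},2\sqrt{-b_k})$ established just before Claim~\ref{InverseBranches}. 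The second point requiring care is pinning the constant exactly: the factor $6\sqrt{-b_{k_0}}$ (rather than $12\sqrt{-b_{k_0}}$) comes precisely from the extra $2^{-(M_{k_0}-m)}\le\tfrac14$ in the first block, and $4\pi+\tfrac32$ is exactly the length of the arc‑plus‑radius path inside $\overline{\mathrm A}(0,\tfrac12,2)$.
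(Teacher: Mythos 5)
Your proof is correct and takes essentially the same route as the paper's: pull $A_k$ back through the milestone times, use Claims~\ref{InvariantAnnuliDiscs}(1) and \ref{Contracting1}(2) to contract by $\tfrac12$ on each of the $k-1-k_0$ middle blocks, bound the first block's derivative via the computation in Claim~\ref{Contracting1}, and convert a derivative bound into a diameter bound with an arc-plus-radial path of length $\le 4\pi+\tfrac32$ inside $\overline{\mathrm A}(0,\tfrac12,2)$. Your sharpened first-block bound of $3\sqrt{-b_{k_0}}$ (using $M_{k_0}-m\ge 2$, and handling $m=M_{k_0}-1$ separately) is exactly what reproduces the stated constant $6\sqrt{-b_{k_0}}/2^{k-k_0}$; the coarse $6\sqrt{-b_{k_0}}$ from the statement of Claim~\ref{Contracting1}(1) alone would give $12\sqrt{-b_{k_0}}/2^{k-k_0}$. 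One small correction to your ``main obstacle'' remark: the chosen branch $\chi$ of $Q_{m,M_{k-1}}^{-1}$ need \emph{not} extend to a neighbourhood of the whole annulus $\overline{\mathrm A}(0,\tfrac12,2)$ (the critical value $0$ lies in its bounded complementary component, so a branch can cycle when continued around it); what saves the argument is precisely the sector observation you flagged as only needed for the base case -- $\psi(A_k)$ lies in a sector of aperture $\le\pi$, so one can connect $p$ and $q$ by an arc-plus-radial path inside the simply connected petal $\psi(\mathrm D(\sqrt{-b_k},2\sqrt{-b_k}))$ on which $\chi$ \emph{is} single-valued, and the lift then ends at the correct point.
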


\begin{claimproof}
By Definition \ref{AkDef} and \eqref{mkbound1} of Claim \ref{InvariantAnnuliDiscs} (or the stronger \eqref{mkbound2} of Claim \ref{Contracting1}), we have $Q_{M_{k-1}, M_k -1}(\overline {\mathrm D}(0,2)) \supset \overline {\mathrm D}(0,8\sqrt{-b_k}) \supset A_k$ and, since $Q_{M_{k-1}, M_k -1} = z^{2^{m_k}}+a_k$ maps $ {\mathrm D}(0,2)$ to its image with the full degree $2^{m_k}$ of this polynomial, it follows that $Q_{M_{k-1}, M_k -1}^{-1}(A_k) \subset \overline {\mathrm D}(0,2)$. On the other hand, again by Definition \ref{AkDef}, the distance of $A_k$ from the critical value $a_k$ of $Q_{M_{k-1}, M_k -1}$ is $r_k$ and by Lemma \ref{rkskbounds} $r_k \ge \tfrac{1}{\sqrt{-b_k}}$. Lastly we then have by \eqref{InnerRadiusBound1} in the proof of Claim \ref{InvariantAnnuliDiscs} that 

$$\frac{1}{2} \le \left ( \frac{1}{8\sqrt{-b_k}}\right)^{1/2^{m_k}} < \: \left ( \frac{1}{\sqrt{-b_k}}\right)^{1/2^{m_k}} < r_k^{1/2^{m_k}}$$

whence we obtain $Q_{M_{k-1}, M_k -1}^{-1}(A_k) \subset \overline {\mathrm A}(0,\tfrac{1}{2},2)$. On the other hand, since $A_k$ lies entirely to the right of $a_k = -\sqrt{-b_k}$ which is the critical value of $Q_{M_{k-1}, M_k -1}$, it follows that each component of the preimage lies in a sector ${\mathrm S}(0, \theta_1, \theta_2)$ at $0$ of aperture $\tfrac{\pi}{2^{m_k}} \le \pi$. 

The fact that $Q_{M_{k-1}, M_k -1}^{-1}(A_k) \subset \overline {\mathrm A}(0,\tfrac{1}{2},2)$ from above allows us to estimate the size of the annuli $B^{k,j}_m$, $1 \le j \le 2^{M_k - m - 1}$ by estimating the derivatives of the inverse branches of $Q_{m, M_{k-1}}$ from the times $M_{k-1}$ on the annuli $B^{k,j}_{M_{k-1}}$, $1 \le j \le 2^{m_k}$ which are then subsets of $\overline {\mathrm A}(0,\tfrac{1}{2},2) \subset {\mathrm A}(0,\tfrac{1}{3},3)$, instead of estimating the derivatives of the inverse branches of $Q_{m, M_k -1}$ from the times $M_k -1$  on the annuli $A_k$ (we direct the reader to carefully note the difference between the times $M_{k-1}$ and $M_k -1$). Using  \emph{1.} of Claim \ref{InvariantAnnuliDiscs} and both parts of Claim \ref{Contracting1}, we have that the diameter of each of the conformal annuli $B^{k,j}_m$, $1 \le j \le 2^{M_k - m - 1}$ above is then at most $\tfrac{\left (4\pi + \tfrac{3}{2} \right )6\sqrt{-b_{k_0}}}{2^{k - k_0}}$, where the quantity $4\pi + \tfrac{3}{2}$ is a crude estimate for the distance between points inside ${\mathrm A}(0,\tfrac{1}{2},2)$ where we connect them with a path in this annulus which lies inside a sector based at $0$ as above of aperture at most $\pi$ and which consists of arcs and radial line segments. The desired estimate then follows and we have thus proved Claim \ref{ShrinkingSeparatingAnnuli1}.
\end{claimproof}

Using Lemma \ref{rkskbounds} and Definition \ref{AkDef}, the set $G_{M_k-1}^k$ at time $M_k-1$ lies in the bounded complementary component of the annulus $A_k = {\mathrm A}(\sqrt{-b_k}, r_k, 2\sqrt{-b_k} - r_k)$. Recalling from Claim \ref{InverseBranches} above that  $Q_{M_k -1}$ has a complete set of univalent inverse branches defined on the disc ${\mathrm D}(\sqrt{-b_k}, 2\sqrt{-b_k})$, using Definition \ref{BkjmDef}
each of the conformal annuli $B^{k,j}_m$ contains precisely one of the components of $Q_{m,M_k-1}^{-1}(G_{M_k-1}^k)$ in its bounded complementary component
and thus separates this component from the others (as well as from the connected set $H^k_m$).  

Now let $m \ge 0$ and let $z \in \Gm$ be arbitrary. Recall that we observed earlier (two paragraphs after the definition of the sets $\Gm$ in \eqref{GmDef1}) that $\Gm$ is precisely the set of points for which $Q_{m, M_k-1}(z) \in G^k_{M_k -1}$ for infinitely many $k$ whence from the above $z$ lies in the bounded complementary component of one of the annuli in the collection $B^{k,j}_m$, $1 \le j \le 2^{M_k - m - 1}$ for infinitely many $k$. Since the Euclidean diameters of these conformal annuli shrink to $0$ as $k \to \infty$ by Claim \ref{ShrinkingSeparatingAnnuli1}, while their moduli tend to infinity (as remarked shortly after the statement of Claim \ref{InverseBranches}), it then follows that these annuli will separate $\Gm$ for $k$ sufficiently large and that $\Gm$ is thus pointwise thin and, again from our earlier observations (in the paragraphs following \eqref{GmDef1}), a dense $\Gdelta$ subset of $\Jm$. We remark that this is where we need $b_k \to -\infty$ as $k \to \infty$ since otherwise if we merely had an unbounded sequence, the values of $k$ above where $z$ lies in the bounded complementary component of one of the annuli in the collection $B^{k,j}_m$ might correspond to a bounded subsequence of $\{b_k\}_{k=1}^\infty$ which would not allow us to conclude that $\Gm$ is pointwise thin at $z$. Finally, since by \eqref{GmDef1} above, $\Jm \setminus \Gm = \Hm$ which we already know by \emph{5.} is not pointwise thin at any of its points, it follows that $\J_{m, \mbox{\scriptsize thin}} = \Gm$, which finishes the proof of \emph{2.}

Finally, since we showed in proving \emph{5.} (just after the proof of Claim \ref{Hm1}) that $\Hm$ is dense in $\Jm$, we have $\overline{\mathcal H}_{m}= \Jm$.
However, $\Jm$ is not uniformly perfect since we showed in proving \emph{1.} above  that ${\mathcal J}_{m, \mbox{\scriptsize thin}}$  is a non-empty subset of $\Jm$. Thus $\overline{\mathcal H}_{m}$ is not uniformly perfect, which completes the proof of Theorem \ref{MainTh1}. \hfill $\Box$



\begin{thebibliography}{10}

\bibitem{Ahl}
\newblock L. V. Ahlfors,
\newblock Complex analysis,
\newblock McGraw-Hill Book Co., New York, third edition, 1978.
\newblock An introduction to the theory of analytic functions of one complex
  variable, International Series in Pure and Applied Mathematics.

 \bibitem{Balibrea}
 \newblock Francisco Balibrea,
 \newblock On problems of Topological Dynamics in non-autonomous discrete systems,
 \newblock {Appl. Math. Nonlinear Sci.}, 1(2), 391-404, 2016.
 \newblock DOI: https://doi.org/10.21042/AMNS.2016.2.00034

\bibitem{Bear}
\newblock Alan F. Beardon,
\newblock {Iteration of Rational Functions,}
\newblock  Grad. Texts in Math., 132, Springer-Verlag, New York, 1991.

\bibitem{Bug} Matthias B\"uger,
\newblock Self-similarity of Julia sets of the composition of polynomials,
\newblock  Ergodic Theory Dynam. Systems (1997), 17, 1289-1297.

\bibitem{CL}
\newblock E. Camouzis and G. Ladas,
\newblock Dynamics of Third-Order Rational Difference Equations with Open Problems and Conjectures,
\newblock Chapman and Hall/CRC, 2007.

\bibitem{Com1}
\newblock M. Comerford,
\newblock A survey of results in random iteration,
\newblock Proceedings Symposia in Pure Mathematics, American Mathematical Society, 2004.

\bibitem{Com2}
\newblock M. Comerford,
\newblock Hyperbolic non-autonomous Julia sets,
\newblock Ergodic Theory Dynam. Systems, 26, 353-377, 2006.

\bibitem{CFSS}
M. Comerford, K. Falk, R. Stankewitz, and H. Sumi,
\newblock Hereditarily non Uniformly Perfect analytic and conformal non-autonomous attractor sets,
\newblock Dyn. Syst., 36(8):1-24, 2021,
\newblock DOI:10.1080/14689367.2021.1975262

\bibitem{CSS}
\newblock M. Comerford, R. Stankewitz, and H. Sumi,
\newblock Hereditarily non Uniformly Perfect non-Autonomous Julia Sets,
\newblock Discrete Contin. Dyn. Syst., 40(1):2583-2589, 2020.
\newblock DOI: https://doi.org/10.3934/dcds.2020002

\bibitem{Con}
\newblock J. Conway,
\newblock Functions of One Complex Variable, 2nd. Ed.
\newblock Springer-Verlag, New York, 1973.

\bibitem{FalkStankewitz}
Kurt Falk and Rich Stankewitz,
\newblock Uniformly perfect analytic and conformal non-autonomous attractor
  sets.
\newblock Preprint.

\bibitem{FS}
\newblock John~Erik Forn\ae ss and Nessim Sibony,
\newblock Random iterations of rational functions,
\newblock {Ergodic Theory Dynam. Systems}, 11(4): 687-708, 1991.

\bibitem{Hi}
A.~Hinkkanen,
\newblock Julia sets of rational functions are uniformly perfect,
\newblock Math. Proc. Cambridge Philos. Soc., 113(3):543--559, 1993.

\bibitem{MdR}
\newblock R.~Ma\~{n}\'{e} and L.~F. da~Rocha,
\newblock Julia sets are uniformly perfect,
\newblock Proc. Amer. Math. Soc., 116(1):251--257, 1992.

\bibitem{New}
\newblock M.H.A. Newman,
\newblock Elements of the Topology of Plane Sets of Points
\newblock Cambridge University Press, 1964.

\bibitem{Ses}
\newblock  O. Sester,
\newblock Hyperbolicit\'e des polyn\^omes fibr\'es,
 \newblock (French) [Hyperbolicity of fibered polynomials],
 \newblock Bull. Soc. Math. France, 127(3) (1999), 398-428.

\bibitem{RS3}
\newblock Rich Stankewitz,
\newblock Uniformly perfect sets, rational semigroups, Kleinian groups and
  {IFS}'s,
\newblock Proc. Amer. Math. Soc., 128(9):2569--2575, 2000.

\bibitem{RS4}
\newblock Rich Stankewitz,
\newblock Uniformly perfect analytic and conformal attractor sets.
\newblock Bull. London Math. Soc., 33(3):320--330, 2001.

\bibitem{StankewitzRepelDense2}
\newblock Rich Stankewitz,
\newblock Density of repelling fixed points in the {J}ulia set of a rational or
  entire semigroup, {II},
\newblock Discrete Contin. Dyn. Syst., 32(7):2583--2589, 2012.

\bibitem{SSS}
\newblock Rich Stankewitz, Hiroki Sumi, and Toshiyuki Sugawa,
\newblock Hereditarily non uniformly perfect sets.
\newblock Discrete Contin. Dyn. Syst. S, 12(8), 2019.

\bibitem{Sumi1}
\newblock Hiroki Sumi,
\newblock Skew product maps related to finitely generated rational semigroups,
\newblock Nonlinearity, 13, 995-1019, 2000.


\bibitem{Sumi2}
\newblock Hiroki Sumi,
\newblock Dynamics of sub-hyperbolic and semi-hyperbolic rational semigroups and skew products,
\newblock Ergodic Theory Dynam. Systems, 21, 563-603, 2001.


\bibitem{Sumi3}
\newblock Hiroki Sumi,
\newblock Semi-hyperbolic fibered rational maps and rational semigroups,
\newblock Ergodic Theory Dynam. Systems, 26(3):893--922, 2006.


\bibitem{Sumi4}
\newblock Hiroki Sumi,
\newblock Dynamics of postcritically bounded polynomial semigroups III: classification of semi-hyperbolic semigroups and random Julia sets which are Jordan curves but not quasicircles,
\newblock Ergodic Theory Dynam. Systems (2010), 30, No. 6, 1869--1902.


\end{thebibliography}
\end{document}